\newcommand\tenq[2][1]{%
\def\useanchorwidth{T}%
\ifnum#1>1%
\stackunder[0pt]{\tenq[\numexpr#1-1\relax]{#2}}{\!\scriptscriptstyle\thicksim}%
\else%
\stackunder[1pt]{#2}{\!\scriptstyle\thicksim}%
\fi%
}
\DeclareRobustCommand\widecheck[1]{{\mathpalette\@widecheck{#1}}}
\def\@widecheck#1#2{%
    \setbox\z@\hbox{\m@th$#1#2$}%
    \setbox\tw@\hbox{\m@th$#1%
       \widehat{%
          \vrule\@width\z@\@height\ht\z@
          \vrule\@height\z@\@width\wd\z@}$}%
    \dp\tw@-\ht\z@
    \@tempdima\ht\z@ \advance\@tempdima2\ht\tw@ \divide\@tempdima\thr@@
    \setbox\tw@\hbox{%
       \raise\@tempdima\hbox{\scalebox{1}[-1]{\lower\@tempdima\box
\tw@}}}%
    {\ooalign{\box\tw@ \cr \box\z@}}}
\def\given{\,|\,}
\def\Biggiven{\,\Big{|}\,}
\def\tr{\mathop{\text{tr}}\kern.2ex}
\def\tZ{{\tilde Z}}
\def\P{{\mathrm P}}
\def\E{{\mathrm E}}
\def\d{{\mathrm d}}
\newcommand{\zahl}[1]{\llbracket #1\rrbracket}
\newcommand\yestag{\addtocounter{equation}{1}\tag{\theequation}}
\newcolumntype{L}[1]{>{\raggedright\let\newline\\\arraybackslash\hspace{0pt}}m{#1}}
\newcolumntype{C}[1]{>{  \centering\let\newline\\\arraybackslash\hspace{0pt}}m{#1}}
\newcolumntype{R}[1]{>{ \raggedleft\let\newline\\\arraybackslash\hspace{0pt}}m{#1}}
\newcolumntype{d}[1]{D{.}{.}{#1}}
\newcolumntype{H}{>{\setbox0=\hbox\bgroup}c<{\egroup}@{}}
\newcolumntype{Z}{>{\setbox0=\hbox\bgroup}c<{\egroup}@{\hspace*{-\tabcolsep}}}
\newcolumntype{b}{X}
\newcolumntype{s}{>{\hsize=.5\hsize}X}
\DeclarePairedDelimiter\abs{\lvert}{\rvert}%
\DeclarePairedDelimiter\floor{\lfloor}{\rfloor}
\numberwithin{equation}{section}
\newtheorem{theorem}{Theorem}[section]
\newtheorem{lemma}{Lemma}[section]
\newtheorem{assumption}{Assumption}[section]
\newtheorem{corollary}{Corollary}[section]
\providecommand{\customgenericname}{}
\newcommand{\newcustomtheorem}[2]{%
  \newenvironment{#1}[1]
  {%
   \renewcommand\customgenericname{#2}%
   \renewcommand\theinnercustomgeneric{##1}%
   \innercustomgeneric
  }
  {\endinnercustomgeneric}
}
\theoremstyle{definition}
\newtheorem{definition}{Definition}[section]
\newcommand{\mylabel}[2]{#2\def\@currentlabel{#2}\label{#1}}
\begin{document}

\setlength{\abovedisplayskip}{5pt}
\setlength{\belowdisplayskip}{5pt}
\setlength{\abovedisplayshortskip}{5pt}
\setlength{\belowdisplayshortskip}{5pt}
\hypersetup{colorlinks,breaklinks,urlcolor=blue,linkcolor=blue}

\title{\LARGE On the consistency of bootstrap for matching estimators}

\author{Ziming Lin\thanks{Department of Statistics, University of Washington, Seattle, WA 98195, USA; e-mail: {\tt zmlin@uw.edu}} ~~~and~~~Fang Han\thanks{Department of Statistics, University of Washington, Seattle, WA 98195, USA; e-mail: {\tt fanghan@uw.edu}}
}

\date{\today}

\maketitle

\vspace{-1em}

\begin{abstract}
In a landmark paper, \cite{abadie2008failure} showed that the naive bootstrap is inconsistent when applied to nearest neighbor matching estimators of the average treatment effect with a fixed number of matches. Since then, this finding has inspired numerous efforts to address the inconsistency issue, typically by employing alternative bootstrap methods. In contrast, this paper shows that the naive bootstrap is provably consistent for the original matching estimator, provided that the number of matches, $M$, diverges. The bootstrap inconsistency identified by \cite{abadie2008failure} thus arises solely from the use of a fixed $M$.
\end{abstract}

{\bf Keywords:} matching estimators, naive bootstrap, diverging number of matches.

\section{Introduction}

Nearest neighbor (NN) matching methods, which impute missing potential outcomes by averaging outcomes from matches in the opposite treatment group, are among the most widely used tools for causal inference \citep{stuart2010matching, imbens2024causal}. These methods are praised as “intuitive and easy to explain” \citep{imbens2024causal} and, unlike competing approaches \citep{heckman1997matching, heckman1998matching, hirano2003efficient, chernozhukov2018double}, hinge on an easy-to-interpret parameter—the number of matches—making them user-friendly to tune.

However, these advantages are offset by a significant limitation: Efron’s naive bootstrap \citep{efron1979bootstrap} fails to capture the stochastic behavior of NN matching estimators when the number of matches, $M$, is {\it fixed}. Specifically, the naive bootstrap produces inconsistent estimates of both the asymptotic distribution and the variance of the estimators. As a result, NN matching estimators fall into an interestingly rare class of statistics that are asymptotically normal, $\sqrt{n}$ -consistent, but bootstrap-inconsistent. This phenomenon was rigorously established through a series of seminal papers by Abadie and Imbens \citep{abadie2006large, abadie2008failure, abadie2011bias} and echoed in parallel work on Chatterjee’s rank correlation \citep{chatterjee2020new, lin2022limit, lin2023failure}.

The bootstrap inconsistency of NN matching estimators has spurred extensive research on alternative resampling methods. Proposed solutions include the wild bootstrap, which resamples the linear component of the estimator \citep{otsu2017bootstrap, adusumilli2018bootstrap, bodory2024nonparametric}, the block bootstrap, which resamples matched sets instead of individual observations \citep{abadie2022robust}, the $m$-out-of-$n$ bootstrap \citep{walsh2023nearest}, and the bag of little bootstraps \citep{kosko2024fast}, among others. Some empirical evidence suggests that bootstrap-based methods improve coverage rates over asymptotic approximations; see, for example, \cite{bodory2020finite} and references therein.

This paper contributes to this line of research by revisiting the original setting of \cite{abadie2008failure}. Building on recent insights from \cite{lin2023estimation} and \cite{he2024propensity}, which connect NN matching to augmented inverse probability weighting estimators and highlight the importance of allowing $M\to\infty$ for improved efficiency, we show that the counterexample in \cite{abadie2008failure} no longer holds when $M$ grows with the sample size. Specifically, we demonstrate that once $M$ is allowed to diverge, the naive bootstrap yields consistent distributional estimates for NN matching estimators. Thus, the bootstrap inconsistency identified in \cite{abadie2008failure} arises entirely from the practical constraint of fixing $M$.

\section{Setup and methods}

\subsection{Matching methods}\label{sec:matching}

We consider the standard potential outcome causal model for a binary treatment, assuming $n$  realizations, $\{(X_i, D_i, Y_i(0), Y_i(1))\}_{i=1}^n$, drawn from a quadruple 
\[
(X, D, Y(0), Y(1)).
\]
Here $Y(0),Y(1) \in \bR$ are the potential outcomes under control and treatment, respectively \citep{neyman1923applications, rubin1974estimating}, the treatment indicator $D \in \{0,1\}$ denotes whether the subject received the treatment, and $X \in \cX \subset \bR^d$ corresponds to pretreatment covariates. For each subject $i\in\zahl{n}:=\{1,2,\ldots,n\}$, we observe only the triple 
\[
O_i := (X_i, D_i, Y_i = Y_i(D_i)). 
\]
Write $\mX=[X_1,\ldots,X_n]\in\bR^{d\times n}$, $\mD=(D_1,\ldots, D_n)^\top\in\bR^n$, and $\mY=(Y_1,\ldots,Y_n)^\top\in\bR^n$. The goal of this paper is to estimate the population average treatment effect (ATE),
\begin{align*}
    \tau := \E[Y(1) - Y(0)],
\end{align*}
using only the observed data $(\mX,\mD,\mY)$.

Abadie and Imbens \citep{abadie2006large, abadie2011bias} introduced a (bias-corrected) matching approach to estimating $\tau$. Their method imputes the missing potential outcome $Y_i(1-D_i)$ for each subject $i$ by averaging the outcomes $Y_j(D_j)$'s from subjects with opposite treatment status, i.e., those whose covariates are closest to $X_i$ under the Euclidean distance $\|\cdot\|$ with $D_j=1-D_i$. The resulting estimator is then adjusted by subtracting a bias estimate.

In detail, for each subject $i\in\zahl{n}$, let $\mathcal{J}_M(i)$ denote the index set of $M$-NNs of $X_i$ among subjects with $D_j=1-D_i$; formally,
\[
\mathcal{J}_M(i) := \Big\{ j\in\zahl{n}: D_j=1-D_i, \sum_{\ell=1, D_\ell =1- D_i}^n \ind \Big( \lVert X_\ell - X_i \rVert < \lVert X_j - X_i \rVert\Big) < M\Big\}.
\]
The missing potential outcome $Y_i(1-D_i)$ is then imputed by
\[
\hat Y_i(1-D_i) := \frac{1}{M}\sum_{j\in\cJ_M(i)}Y_j,
\]
along with the convention that $\hat Y_i(D_i)=Y_i$. The {\it bias-uncorrected} $M$-NN matching estimator of $\tau$ is accordingly given by
\[
\hat\tau_M := \frac{1}{n}\sum_{i=1}^n\Big\{\hat Y_i(1)-\hat Y_i(0)  \Big\},
\]
as studied in \cite{abadie2006large}. 

However, as noted in \cite{abadie2006large}, unless $d=1$, the estimator $\hat\tau_M$ suffers from an asymptotically non-negligible bias of the form
\[
B_M :=  \frac{1}{n}\sum_{i = 1}^n\frac{2D_i - 1}{M}\sum_{j \in \cJ_M(i)}\Big\{\mu_{1 - D_i}(X_i) - \mu_{1- D_i}(X_j)\Big\},
\]
where $\mu_0(\cdot)$ and $\mu_1(\cdot)$ are the outcome conditional means 
\[
\mu_0(x) := \E[Y \given X=x, D=0]~~ {\rm and} ~~\mu_1(x) := \E[Y \given X=x, D=1],
\] 
respectively. To address this bias, \cite{abadie2011bias} proposed a simple bias correction. Let $\hat \mu_0(\cdot)$ and $\hat \mu_1(\cdot)$ be estimators of $\mu_0(\cdot)$ and $\mu_1(\cdot)$, respectively. The bias correction term is defined as
\[
\hat B_M := \frac{1}{n}\sum_{i=1}^n\frac{2D_i-1}{M}\sum_{j\in\cJ_M(i)}\Big\{\hat\mu_{1-D_i}(X_i)-\hat\mu_{1-D_i}(X_j)\Big\}.
\]
The final bias-corrected $M$-NN matching estimator is
\[
\hat\tau_M^{\rm bc}:=\hat\tau_M-\hat B_M.
\]

\subsection{The naive bootstrap}\label{sec:bootstrap}

This paper focuses on Efron’s naive bootstrap for inferring $\hat\tau_M^{\rm bc}$. Specifically, we consider the bootstrap sample defined by \cite{abadie2008failure},
\[
\Big\{O_i^*=(X_i^*,D_i^*,Y_i^*)\Big\}_{i=1}^n,
\]
where observations are independently resampled {\it within} the treated and control groups separately. The bootstrap sample thus consists of $n_0$ and $n_1$ independent draws with replacement from $\{O_i\}_{i=1,D_i=0}^n$ and $\{O_i\}_{i=1,D_i=1}^n$, respectively.

Next, we implement the bootstrap procedure to infer the matching estimator. Following the same steps as in Section \ref{sec:matching}, we first define the bootstrap matching set as
\[
\tilde{\mathcal{J}}_M^*(i) := \Big\{ j\in\zahl{n}: D_j^*=1-D_i^*, \sum_{\ell=1, D_\ell^* =1- D_i^*}^n \ind \Big( \lVert X_\ell^* - X_i^* \rVert < \lVert X_j^* - X_i^* \rVert\Big) < M\Big\}.
\] 
Since ties can occur in the bootstrap sample,  $\tilde{\mathcal{J}}_M^*(i)$ may contain more than $M$ elements. Similar to \cite{lin2023failure}, we break ties {\it arbitrarily} to obtain a subset 
\[
\mathcal{J}_M^*(i)\subset \tilde{\mathcal{J}}_M^*(i),~~~\text{with cardinality }~\Big|\mathcal{J}_M^*(i)\Big|=M
\]
for each $i\in\zahl{n}$. The corresponding imputations are then 
\begin{align*}
\hat{Y}_i^*(0) =
    \begin{cases}
         M^{-1} \sum_{j \in \mathcal{J}_M^*(i)} Y_j^*,  & \mbox{ if } D_i^*=1, \\   
         Y_i^*, & \mbox{ if } D_i^*=0,
         \end{cases}
~~{\rm and}~~
\hat{Y}_i^*(1) =
    \begin{cases}
        Y_i^*, & \mbox{ if } D_i^*=1, \\   
       M^{-1}\sum_{j \in \mathcal{J}_M^*(i)}Y_j^*, & \mbox{ if } D_i^*=0.
    \end{cases}    
\end{align*}
The bias-uncorrected bootstrap matching estimator is accordingly given by
\[
\hat\tau^*_M:=\frac{1}{n}\sum_{i=1}^n\Big\{\hat Y_i^*(1)-\hat Y_i^*(0)\Big\},
\]
and the bias-corrected bootstrap matching estimator is 
\[
\hat\tau_M^{*, {\rm bc}} := \hat\tau^*_M-\hat B_M^*,
\]
where
\[
\hat B_M^* := \frac{1}{n}\sum_{i=1}^n\frac{2D_i^*-1}{M}\sum_{j\in\cJ^*_M(i)}\Big\{\hat\mu_{1-D_i^*}(X_i^*)-\hat\mu_{1-D_i^*}(X_j^*)\Big\}.
\]

\section{Bootstrap theory}

This section gives the bootstrap consistency guarantee for $\hat\tau_M^{*, {\rm bc}}$ provided that $M$ grows to infinity at an appropriate rate with the sample size. 

\subsection{Setup}

Let us first rigorously document the two sources of randomness: one from the original data generating scheme and the other from the bootstrap. Efron's bootstrap corresponds to some random weights $\mW_0$ and $\mW_1$ that account for the numbers of times each observation in $\{O_i\}_{i=1,D_i=1}^n$ and $\{O_i\}_{i=1,D_i=0}^n$ appears in the bootstrap sample. Specifically, we have 
\begin{align*}
   \mW_0 \sim {\rm Mult}\big(n_0, (n_0^{-1}, \ldots, n_0^{-1})\big)  ~\text{ and }~ \mW_1 \sim {\rm Mult}\big(n_1, (n_1^{-1}, \ldots, n_1^{-1})\big)
\end{align*}
to be multinomially distributed. Let $\mW=(W_1,\ldots,W_n)^\top$ be the concatenated vector of $\mW_0$ and $\mW_1$ according to the order of $O_i$'s. 

In the following, we use $(\Omega_O, \cA_O, \P_O )$ to denote the probability space from which the observed data $\{O_i\}_{i=1}^n$ arises and $( \Omega_W, \cA_W, \P_W )$ to denote the probability space for the bootstrap weights. Let the probability measure on the product measurable space be $\P_{OW}$,
so that 
\begin{align*}
    \big( \Omega_O^\infty, \cA_O^\infty, \P_O^\infty \big) \times ( \Omega_W, \cA_W, \P_W ) =  \big( \Omega_O^\infty \times \Omega_W, \cA_O^\infty \times \cA_W, \P_{OW}\big),
    \yestag \label{eq:setup,ProbSpace}
\end{align*}
which we call the {\it bootstrap space}; throughout this paper, we make the assumption that the bootstrap weights are independent of the data, so that $\P_{OW} = \P_O^\infty \times \P_W$. Denote the corresponding expectation with respect to $\P_{OW}$ by $\E_{OW}$, similarly for the notations $\E_{W|O}$, $\E_{O}$, and $\E_{W}$. For simplicity, if no confusion is possible, we shorthand $\P_O^\infty, \P_W$ as $\P$, and $\E_{O}, \E_{W}$ as $\E$.

Lastly, given a real-valued random quantity $R_n$ defined over the bootstrap space, we say $R_n$ is of an order $o_{\P_W}(1)$ in $\P_O$-probability if, for any $\epsilon, \delta>0$
\begin{align*}
    \P_O \Big\{ \P_{W|O}(\lvert R_n \rvert > \epsilon) > \delta \Big\} \to 0, ~~~~~\text{as } n \to \infty.
\end{align*}

\subsection{Theory}

In the following, for any two sequences of positive real numbers $\{a_n\}$ and $\{b_n\}$, we write $a_n\lesssim b_n$, or equivalently $a_n=O(b_n)$, if there exists a universal constant $C>0$ such that $a_n/b_n\leq C$ for all sufficiently large $n$. We write $a_n=o(b_n)$ if $a_n/b_n\to 0$ as $n\to\infty$. We use $o_\P(\cdot)$ and $O_\P(\cdot)$ to represent the stochastic versions of $o(\cdot)$ and $O(\cdot)$, respectively.

Our theory is based on the following two sets of assumptions.

\begin{assumption}  \phantomsection \label{asp:matching1} 
  \begin{enumerate}[itemsep=-.5ex,label=(\roman*)]
    \item For almost all $x \in \cX$, $D$ is independent of $(Y(0),Y(1))$ conditional on $X=x$, and there exists some constant $\eta > 0$ such that 
    \[
    \eta < e(x):=\P(D=1 \given X=x) < 1-\eta.
    \]
    \item $\{(X_i,D_i,Y_i(0),Y_i(1))\}_{i=1}^n$ are independent copies of $(X,D,Y(0),Y(1))$.
    \item $\sigma_\omega^2(x):=\E [U^2_\omega \given X=x] =\E[(Y(\omega)-\mu_{\omega}(X))^2\given X=x]$ is uniformly bounded by a constant $C_{\sigma}>0$ for almost all $x \in \cX$ and $\omega \in \{0,1\}$.
    \item $\E [\mu^2_\omega]$ is bounded for $\omega \in \{0,1\}$.
  \end{enumerate}
\end{assumption}

\begin{assumption}  \phantomsection \label{asp:matching2} 
  \begin{enumerate}[itemsep=-.5ex,label=(\roman*)]    
   \item For any $\omega \in \{0,1\}$,  $\max_{t \in \Lambda_{\floor{d/2}{}+1}} \lVert \partial^t {\mu}_\omega \rVert_{\infty}$ is bounded, where $\|\cdot\|_{\infty}$ stands for the function supremum norm, $\Lambda_k$ denotes the set of all $d$-dimension vectors of nonnegative integers that sums to $k$, and $\floor{\cdot}{}$ stands for the floor function.  
   \item For any $\omega \in \{0,1\}$, the outcome regression estimator $\hat{\mu}_\omega(\cdot)$ satisfies
    \begin{align*}
    \max_{t \in \Lambda_{\floor{d/2}{}+1}} \big\lVert \partial^t \hat{\mu}_\omega \big\rVert_{\infty} = O_{\P_O}(1) 
    \end{align*}
    and 
    \begin{align*}
        \max_{t \in \Lambda_{\ell}} \big\lVert \partial^t \hat{\mu}_\omega - \partial^t {\mu}_\omega \big\rVert_{\infty}= O_{\P_O} ( n^{-\gamma_{\ell}}), ~~\text{for all } \ell \in \zahl{\floor{d/2}{}}
    \end{align*}
    with some constants $\{\gamma_\ell \}_{\ell=1}^{\floor{d/2}{}}$ satisfying $\gamma_\ell > \frac{1}{2} - \frac{\ell}{d}$ for all $\ell \in \zahl{\floor{d/2}{}}$.
      \end{enumerate}
\end{assumption}

Assumption \ref{asp:matching1} corresponds to Assumption 4.1 in \cite{lin2023estimation}, and is used for proving the asymptotic normality of $\hat\tau_M$. Assumption \ref{asp:matching2} is needed for justifying bias correction. It aligns with Assumptions 4.4–4.5 in \cite{lin2023estimation}, except that Assumption 4.4(i)(ii) has been omitted. This relaxation is made possible by a new linearization result, which we will present in Theorem \ref{thm-new} ahead. It eliminates the need for a Lindeberg-Feller-type argument used in the proof of Lemma C.1 in \cite{lin2023estimation}. 

With the above two assumptions, the following theorem is the main result in \cite{lin2023estimation}.

\begin{theorem}[\cite{lin2023estimation}]\label{thm:lin} Assume that Assumption \ref{asp:matching1} holds true and either $(\P_{X\given D=0}, \P_{X\given D=1})$ or  $(\P_{X\given D=1}, \P_{X\given D=0})$ satisfies Assumption~\ref{asp:BT,Lp,risk} ahead. 
\begin{enumerate}[itemsep=-.5ex,label=(\roman*)]
\item If $M \to \infty$ and $M \log n/n \to 0$ as $n \to \infty$, we then have
\[
\sup_{t \in \bR} \Big\lvert \P\big\{\sqrt{n}(\hat\tau_M-B_M- \tau)\le t\big\} -\P\big(N(0, \sigma^2) \le t\big)   \Big\rvert = o(1),
\]
where 
\[
\sigma^2:=\E\Big[(\mu_1(X)-\mu_0(X)-\tau)^2 \Big]+\E\Big[\frac{\sigma_1^2(X)}{e(X)}+\frac{\sigma_0^2(X)}{1-e(X)}\Big]
\]
is the semiparametric efficiency lower bound \citep{hahn1998role}.
\item If further assuming Assumption \ref{asp:matching2} and $M/n^\gamma\to 0$ with
\[
  \gamma := \Big\{ \min_{\ell \in \zahl{\lfloor d/2 \rfloor} } \Big[1-\Big(\frac{1}{2} - \gamma_\ell \Big)\frac{d}{\ell}\Big]\Big\} \wedge \Big[1-  \frac{d/2}{\lfloor d/2 \rfloor + 1}\Big],
  \]
 we then have
\[
\sup_{t \in \bR} \Big\lvert \P\big(\sqrt{n}(\hat\tau_M^{\rm bc}- \tau)\le t\big) -\P\big(N(0, \sigma^2) \le t\big)   \Big\rvert = o(1).
\]
\end{enumerate}
\end{theorem}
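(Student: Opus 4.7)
The plan rests on a standard double-sum identity
\[
\hat\tau_M = \frac{1}{n}\sum_{i=1}^n (2D_i-1)\Big(1+\frac{K_M(i)}{M}\Big) Y_i, \qquad K_M(i):=\Big|\{j\in\zahl{n}:i\in\mathcal{J}_M(j)\}\Big|,
\]
obtained by switching the order of summation in the definition of $\hat\tau_M$. Setting $U_i:=Y_i-\mu_{D_i}(X_i)$ and comparing $\E[\hat\tau_M\mid \mathbf{X},\mathbf{D}]$ to the explicit form of $B_M$ (exchanging summation there as well), the bias-uncorrected estimator decomposes cleanly as
\[
\hat\tau_M - B_M - \tau \;=\; \frac{1}{n}\sum_{i=1}^n\bigl\{\mu_1(X_i)-\mu_0(X_i)-\tau\bigr\} \;+\; \frac{1}{n}\sum_{i=1}^n (2D_i-1)\Big(1+\frac{K_M(i)}{M}\Big)U_i \;=:\; S_1 + S_2.
\]

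For part (i), I would handle $S_1$ and $S_2$ separately and then argue joint normality. The term $\sqrt{n}\,S_1$ is an IID centered sample mean whose limit $N(0,\E[(\mu_1(X)-\mu_0(X)-\tau)^2])$ follows from the classical CLT using Assumption~\ref{asp:matching1}(iv). For $\sqrt{n}\,S_2$, I would condition on $(\mathbf{X},\mathbf{D})$, under which the $U_i$ are independent centered with variances $\sigma_{D_i}^2(X_i)$, so the conditional variance is
\[
V_n := \frac{1}{n}\sum_{i=1}^n \Big(1+\frac{K_M(i)}{M}\Big)^2 \sigma_{D_i}^2(X_i).
\]
The key analytic step is to show that, as $M\to\infty$ with $M\log n/n\to 0$, the normalized match count $K_M(i)/M$ concentrates around $D_i(1-e(X_i))/e(X_i)+(1-D_i)e(X_i)/(1-e(X_i))$, so $1+K_M(i)/M$ approaches $D_i/e(X_i)+(1-D_i)/(1-e(X_i))$ in a mean-square sense. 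This pushes $V_n$ to $\E[\sigma_1^2(X)/e(X) + \sigma_0^2(X)/(1-e(X))]$ via Assumption~\ref{asp:BT,Lp,risk}. A conditional Lindeberg check is routine given $\sigma_\omega^2\le C_\sigma$ and the $L^2$-boundedness of $1+K_M(i)/M$. Joint asymptotic normality of $(S_1,S_2)$ then follows from the same conditioning argument since $S_1$ is $(\mathbf{X},\mathbf{D})$-measurable while $S_2$ is conditionally centered Gaussian in the limit; the two variances add to $\sigma^2$.

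For part (ii), write $\hat\tau_M^{\mathrm{bc}} - \tau = (\hat\tau_M - B_M - \tau) + (B_M - \hat B_M)$, so it suffices to show $\sqrt{n}\,(B_M - \hat B_M) = o_{\P}(1)$. With $\Delta_\omega:=\mu_\omega - \hat\mu_\omega$,
\[
B_M - \hat B_M \;=\; \frac{1}{n}\sum_{i=1}^n \frac{2D_i-1}{M}\sum_{j\in\mathcal{J}_M(i)} \bigl\{\Delta_{1-D_i}(X_i) - \Delta_{1-D_i}(X_j)\bigr\}.
\]
Expanding $\Delta_{1-D_i}(X_j)$ in a multivariate Taylor polynomial around $X_i$ to order $\lfloor d/2\rfloor+1$, the degree-$\ell$ piece factors as $\partial^t\Delta_{1-D_i}(X_i)$ times $(X_j-X_i)^t$. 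Assumption~\ref{asp:matching2}(ii) controls $\|\partial^t\Delta_\omega\|_\infty = O_{\P}(n^{-\gamma_\ell})$ for $\ell\le\lfloor d/2\rfloor$, while the standard $M$-NN distance bound yields $M^{-1}\sum_{j\in\mathcal{J}_M(i)}\|X_j-X_i\|^\ell = O_{\P}((M/n)^{\ell/d})$; the leading Taylor remainder at order $\lfloor d/2\rfloor+1$ is handled using the $O_{\P}(1)$ bound in Assumption~\ref{asp:matching2}(i) paired with $(M/n)^{(\lfloor d/2\rfloor+1)/d}$. Combining these shows that every piece is $o(n^{-1/2})$ precisely when $M = o(n^\gamma)$ for the $\gamma$ in the statement, and part (ii) then follows from part (i) by Slutsky.

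The principal obstacle is the concentration of $K_M(i)/M$ alluded to in part (i). The match counts are globally coupled through the shared covariate pool, so naive Efron--Stein or Poissonization arguments fail in the diverging-$M$ regime; here one must invoke the $L^p$-risk bounds for bipartite nearest-neighbor distances in Assumption~\ref{asp:BT,Lp,risk} to pin down the weights uniformly in $i$. This is precisely the role of the new linearization in Theorem~\ref{thm-new}, which provides an AIPW-type approximation sharp enough to bypass the delicate Lindeberg--Feller argument used in Lemma~C.1 of \cite{lin2023estimation}.
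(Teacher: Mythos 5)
You are proving a statement that this paper itself does not reprove: Theorem \ref{thm:lin} is imported from \cite{lin2023estimation}, and the route the present paper builds around it is the linear representation of Theorem \ref{thm-new} (established by a single unconditional second-moment computation) followed by the ordinary iid CLT for $\chi_0(O_i)$, plus, for part (ii), the Taylor-expansion control of $B_M-\hat B_M$ (the original-sample analogue of Part II.1 of the proof of Theorem \ref{thm2}). Your starting decomposition $\hat\tau_M-B_M-\tau=S_1+S_2$, your reliance on the $L^2$ concentration of $K_M(i)/M$ around $(1-e(X_i))/e(X_i)$ or $e(X_i)/(1-e(X_i))$ (which is exactly the density-ratio consistency used here), and your part (ii) argument (Taylor expansion to order $\lfloor d/2\rfloor+1$, the $M$-NN distance bound $O_{\P}((M/n)^{\ell/d})$, then Slutsky with the stated $\gamma$) all match that machinery and are sound.

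The gap is in part (i): you run a CLT conditional on $(\mX,\mD)$ and declare the conditional Lindeberg check ``routine given $\sigma_\omega^2\le C_\sigma$ and the $L^2$-boundedness of $1+K_M(i)/M$.'' It is not: the random weights $1+K_M(i)/M$ are unbounded (they can be of order $n/M$), only their second moments are controlled, and Assumption \ref{asp:matching1} bounds only conditional variances of the residuals, so the triangular-array Lindeberg condition does not follow from these two facts alone; making it work requires extra moment or regularity conditions --- precisely the Assumption 4.4(i)(ii) of \cite{lin2023estimation} that this paper deliberately drops --- and one must additionally splice the conditionally Gaussian limit of $S_2$ with the $(\mX,\mD)$-measurable $S_1$, another nontrivial step of the Lemma C.1 argument. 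The clean fix is the one you gesture at in your last paragraph but do not actually use: Theorem \ref{thm-new} shows directly that $\sqrt{n}\,S_2=n^{-1/2}\sum_{i=1}^n\{D_i/e(X_i)-(1-D_i)/(1-e(X_i))\}\{Y_i-\mu_{D_i}(X_i)\}+o_{\P_O}(1)$, via the bound $\E[(1+K_M(i)/M-1/e(X_i))^2\sigma_1^2(X_i)]=o(1)$ coming from the $L^p$ risk consistency under Assumption \ref{asp:BT,Lp,risk}; then $\sqrt{n}(\hat\tau_M-B_M-\tau)$ is an iid average of $\chi_0(O_i)-\tau$ up to $o_{\P_O}(1)$, $\E[\chi_0^2]<\infty$ under Assumption \ref{asp:matching1}, and part (i) follows from the classical CLT with no conditioning, no Lindeberg verification, and no joint-normality patching. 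Either adopt that route (in which case your conditional-variance and Lindeberg paragraph is unnecessary), or supply the missing truncation/uniform-integrability argument for the Lindeberg condition with unbounded random weights; as written, that step is the unproved heart of your part (i).
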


Our first contribution to the literature is a new linear representation result for the matching estimator $\hat\tau_M$.

\begin{theorem}[Linear representation for $\hat\tau_M$]\label{thm-new} 
 Assume Assumptions~\ref{asp:matching1} and either $(\P_{X\given D=0}, \P_{X\given D=1})$ or  $(\P_{X\given D=1}, \P_{X\given D=0})$ satisfies Assumption~\ref{asp:BT,Lp,risk} ahead. If $M \to \infty$ and $M \log n/n \to 0$ as $n \to \infty$, we then have
 \[
 \sqrt{n} (\hat\tau_M-B_M-\tau) = \frac{1}{\sqrt{n}} \sum_{i=1}^n \big\{\chi_0(O_i)-\tau\big\} + o_{\P_O}(1), 
 \]
 where 
  \begin{align*}
      \chi_0(O_i) := \mu_1(X_i) - \mu_0(X_i) + \Big\{\frac{D_i}{e(X_i)} - \frac{1-D_i}{1-e(X_i)}\Big\}\big\{Y_i - \mu_{D_i}(X_i) \big\}.
  \end{align*}
\end{theorem}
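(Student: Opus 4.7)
The plan is to reduce the claim to a mean-square control of the matching weights $K_M(i)/M$, which bypasses the Lindeberg--Feller argument of Lemma~C.1 in \cite{lin2023estimation}. Let $U_i := Y_i - \mu_{D_i}(X_i)$ and let $K_M(i) := \sum_{j=1}^n \ind\{i \in \mathcal{J}_M(j)\}$ count the number of times unit $i$ is used as a match. Substituting $Y_j = \mu_{D_j}(X_j) + U_j$ into the definition of $\hat\tau_M$ and checking the cases $D_i \in \{0,1\}$ separately, the deterministic part cleanly regroups as $\frac{1}{n}\sum_i\{\mu_1(X_i)-\mu_0(X_i)\} + B_M$. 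The stochastic part, after invoking the reversal identity $\sum_i (2D_i-1)\sum_{j\in\mathcal{J}_M(i)} g(j) = -\sum_j (2D_j-1) K_M(j) g(j)$ (valid because $D_j = 1-D_i$ whenever $j \in \mathcal{J}_M(i)$), becomes $\frac{1}{n}\sum_i (2D_i-1)(1 + K_M(i)/M) U_i$. Combining,
\[
\hat\tau_M - B_M - \tau = \frac{1}{n}\sum_{i=1}^n\{\mu_1(X_i)-\mu_0(X_i)-\tau\} + \frac{1}{n}\sum_{i=1}^n(2D_i-1)\Big(1+\frac{K_M(i)}{M}\Big) U_i.
\]

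A parallel manipulation on the target, using $(2D_i-1)[D_i/e(X_i)+(1-D_i)/(1-e(X_i))] = D_i/e(X_i) - (1-D_i)/(1-e(X_i))$, gives
\[
\frac{1}{n}\sum_{i=1}^n\{\chi_0(O_i)-\tau\} = \frac{1}{n}\sum_{i=1}^n\{\mu_1(X_i)-\mu_0(X_i)-\tau\} + \frac{1}{n}\sum_{i=1}^n(2D_i-1)\Big[\frac{D_i}{e(X_i)}+\frac{1-D_i}{1-e(X_i)}\Big] U_i.
\]
Subtracting and collapsing the constant ``$1$'', the theorem reduces to showing $\Delta_n = o_{\P_O}(1)$, where
\[
\Delta_n := \frac{1}{\sqrt{n}}\sum_{i=1}^n(2D_i-1) R_i U_i, \qquad R_i := \frac{K_M(i)}{M} - \frac{D_i(1-e(X_i))}{e(X_i)} - \frac{(1-D_i)e(X_i)}{1-e(X_i)}.
\]

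The core of the argument is then a conditional second-moment bound. Conditional on $(\mX,\mD)$, the weights $R_i$ are deterministic while the residuals $U_i$ are independent, mean zero, and have conditional variance at most $C_\sigma$ by Assumption~\ref{asp:matching1}(iii). All cross terms therefore vanish, and
\[
\E[\Delta_n^2] \leq \frac{C_\sigma}{n}\sum_{i=1}^n \E[R_i^2] = C_\sigma\cdot \E[R_1^2],
\]
where the last equality uses exchangeability of the observations. The main obstacle---and the only nontrivial input---is the $L^2$ law of large numbers for matching weights: $\E[R_1^2] \to 0$ under $M\to\infty$, $M\log n/n\to 0$, and the density-regularity Assumption~\ref{asp:BT,Lp,risk}. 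Heuristically, for $D_1=1$ the quantity $K_M(1)/M$ equals the average number of control units that rank unit $1$ in their top-$M$ treated neighbors, whose mean and variance can be analyzed via density estimates to give a limit of $(1-e(X_1))/e(X_1)$. This $L^2$ bound has been established in the matching literature (see \cite{abadie2006large} and the analysis underpinning Theorem~\ref{thm:lin} in \cite{lin2023estimation}), and can be imported as a black box here. Once $\E[R_1^2]\to 0$, Markov's inequality yields $\Delta_n = o_{\P_O}(1)$, completing the proof. Crucially, this second-moment route replaces the Lindeberg--Feller argument in Lemma~C.1 of \cite{lin2023estimation}, which is precisely why the additional regularity of Assumption~4.4(i)(ii) there is not needed.
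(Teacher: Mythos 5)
Your proposal is correct and follows essentially the same route as the paper: the Abadie--Imbens decomposition $\hat\tau_M = \bar{\tau}(\mX)+E_M+B_M$, followed by a conditional-on-$(\mX,\mD)$ second-moment bound in which the cross terms vanish and everything reduces to the $L^2$ convergence $\E[R_1^2]\to 0$ of the matching weights $K_M(i)/M$ to their density-ratio limits. The paper imports exactly this input from Theorem B.2 of \cite{lin2023estimation}; note that \cite{abadie2006large} alone would not supply it, since there $M$ is fixed and $K_M(i)/M$ does not converge in $L^2$ to $(1-e(X_i))/e(X_i)$, so apart from that citation nuance your argument matches the paper's proof.
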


Building on the insights from \cite{lin2023estimation}, Theorem \ref{thm-new} offers a new perspective on matching estimators. Unlike the proof of Lemma C.1 in \cite{lin2023estimation}, which establishes a central limit theorem for $\hat\tau_M$  using an implicit conditioning argument, Theorem \ref{thm-new} provides a straightforward asymptotically linear representation for $\hat\tau_M$. This representation simplifies the subsequent analyses in both the original and bootstrap spaces. Moreover, it connects to the linear expansion initially discovered by \cite{abadie2006large} and later employed by \cite{otsu2017bootstrap}. The differences between our form and theirs, on the other hand, are self-explanatory.

With Theorem \ref{thm-new}, we are now ready to introduce the main theorem of this paper.

\begin{theorem}[Bootstrap distribution consistency]\label{thm2} 
 Assume that Assumption \ref{asp:matching1} holds true and either $(\P_{X\given D=0}, \P_{X\given D=1})$ or  $(\P_{X\given D=1}, \P_{X\given D=0})$ satisfies Assumption~\ref{asp:BT,Lp,risk} ahead. If $M \to \infty$, $M \log n/n \to 0$, and $\log(n/M)/M \to 0$ as $n \to \infty$, we then have
    \begin{enumerate}[itemsep=-.5ex,label=(\roman*)]
      \item\label{thm2:1} {\rm the linearization of $\hat\tau_M^*$:} 
      \[
      \sqrt{n} (\hat \tau_M^* - B_M^*-\tau) = \frac{1}{\sqrt{n}} \sum_{i=1}^n W_i \big\{\chi_0(O_i)-\tau\big\} + o_{\P_{W}}(1) \text{ in }\P_O\text{-probability}, 
      \]
      where
      \[
      B_M^* :=  \frac{1}{n}\sum_{i = 1}^n\frac{2D_i^* - 1}{M}\sum_{j \in \cJ_M^*(i)}\Big\{\mu_{1 - D_i^*}(X_i^*) - \mu_{1- D_i^*}(X_j^*)\Big\};
      \]
      \item\label{thm2:2} further assuming  Assumption \ref{asp:matching2} and $M/n^\gamma\to 0$, 
      \[
      \sqrt{n} (\hat \tau_M^{*, {\rm bc}}-\hat\tau_M^{\rm bc}) = \frac{1}{\sqrt{n}} \sum_{i=1}^n (W_i-1) \chi_0(O_i) + o_{\P_{W}}(1) \text{ in }\P_O\text{-probability,} \yestag \label{eq:matching,BTGn}
      \]
so that 
    \[
    \sup_{t \in \bR} \Big\lvert \P_{\mW|\mO}\big(\sqrt{n} (\hat\tau_M^{*,{\rm bc}} - \hat\tau_M^{\rm bc}) \le t \big) - \P\big(N(0, \sigma^2) \le t\big) \Big\rvert = o_{\P_O}(1)
    \]
and
    \[
    \sup_{t \in \bR} \Big\lvert \P_{\mW|\mO}\big(\sqrt{n} (\hat\tau_M^{*, {\rm bc}} - \hat\tau_M^{\rm bc}) \le t \big) - \P\big(\sqrt{n}(\hat\tau_M^{\rm bc}- \tau) \le t\big) \Big\rvert = o_{\P_O}(1). 
    \]
        \end{enumerate}
\end{theorem}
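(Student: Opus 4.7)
The plan is to first establish the bootstrap linearization in Part (i) by mirroring the argument behind Theorem \ref{thm-new}, but now in the bootstrap space. Rewriting $\hat\tau_M^*$ as a sum over the original indices introduces the multinomial weights $W_i$ explicitly, together with a bootstrap reverse-count $K_{M,i}^*$ that records how often $O_i$ is used as a match in the bootstrap sample. The task then reduces to showing that the same AIPW-type decomposition used in the proof of Theorem \ref{thm-new} carries over, with the weights $W_i$ naturally factoring in front of $\chi_0(O_i)-\tau$. The additional rate $\log(n/M)/M\to 0$, absent from Theorem \ref{thm-new}, is used to secure uniform control over the bootstrap $M$-nearest-neighbor radii (which still scale like $(M/n)^{1/d}$) and over $K_{M,i}^*$; once these are in hand, the matched average $M^{-1}\sum_{j\in\cJ_M^*(i)}Y_j$ collapses to $\mu_{1-D_i^*}(X_i^*)$ plus a mean-zero noise term, and the AIPW-type decomposition proceeds as in the original sample.

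\textbf{From (i) to the bias-corrected difference.} Subtracting the representation of Theorem \ref{thm-new} from Part (i) yields
\[
\sqrt{n}\big[(\hat\tau_M^* - B_M^*) - (\hat\tau_M - B_M)\big] = \frac{1}{\sqrt{n}}\sum_{i=1}^n (W_i - 1)\{\chi_0(O_i)-\tau\} + o_{\P_W}(1),
\]
and since $\sum_{i:D_i=0}(W_i-1)=\sum_{i:D_i=1}(W_i-1)=0$ by the construction of Efron's multinomial weights, the $-\tau$ part drops out. Writing $\hat\tau_M^{*,{\rm bc}} - \hat\tau_M^{\rm bc} = [(\hat\tau_M^*-B_M^*)-(\hat\tau_M-B_M)] - [(\hat B_M^*-B_M^*) - (\hat B_M - B_M)]$, the target linearization \eqref{eq:matching,BTGn} follows once $\sqrt{n}[(\hat B_M^*-B_M^*)-(\hat B_M-B_M)] = o_{\P_W}(1)$ in $\P_O$-probability. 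The original-sample bound $\sqrt{n}(\hat B_M - B_M) = o_{\P_O}(1)$ is implicit in the proof of Theorem \ref{thm:lin}(ii) under Assumption \ref{asp:matching2} and $M/n^\gamma\to 0$; the bootstrap counterpart follows from the same Taylor expansion of $\hat\mu_\omega - \mu_\omega$ across matched pairs, with the bootstrap weights and radii controlled as in Part (i).

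\textbf{Conditional CLT and main obstacle.} With \eqref{eq:matching,BTGn} in hand, the conditional Lindeberg-Feller theorem applied to the triangular array $n^{-1/2}(W_i-1)\chi_0(O_i)$ given $\mO$ delivers the first sup-norm claim: a standard law of large numbers under Assumption \ref{asp:matching1} shows the conditional variance converges in $\P_O$ to $\sigma^2$ (since $\chi_0(O)$ is the efficient influence function for $\tau$), while the Lindeberg condition follows from the bounded second moment of $\chi_0(O)$ combined with $\max_i W_i = O_{\P_W}(\log n)$. Combining this with Theorem \ref{thm:lin}(ii) and P\'{o}lya's theorem yields the second sup-norm statement. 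The hardest step is Part (i): the bootstrap matching set $\tilde\cJ_M^*(i)$ may exceed $M$ elements due to ties (broken arbitrarily), and the reverse-counts $K_{M,i}^*$ fluctuate much more than their original-sample analogues. Transplanting the proof of Theorem \ref{thm-new} to the bootstrap space thus requires new uniform concentration results on both, which are precisely what the new rate $\log(n/M)/M\to 0$ secures.
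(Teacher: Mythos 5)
Your skeleton matches the paper's: rewrite $\hat\tau_M^*$ over the original indices with multinomial weights $W_i$ and bootstrap reverse-counts $K_M^*(i)$ (the paper's Lemma \ref{lemma:BTest,rewrite}), run the same AIPW-type decomposition $\hat\tau_M^*=\bar\tau(\mX)^*+E_M^*+B_M^*$ as in Theorem \ref{thm-new}, kill $\sqrt n(B_M^*-\hat B_M^*)$ by Taylor expansion under Assumption \ref{asp:matching2}, note $\sum_i(W_i-1)=0$ so the $-\tau$ term drops, finish with a conditional CLT, Theorem \ref{thm:lin} and P\'olya, and transfer $o_{\P_{OW}}(1)$ to $o_{\P_W}(1)$ in $\P_O$-probability. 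However, there is a genuine gap exactly where you locate the "hardest step" and then stop: you assert that the rate $\log(n/M)/M\to 0$ "secures" uniform control of the bootstrap NN radii and of $K_M^*(i)$, but you give no argument, and this is not a routine transplantation of Theorem \ref{thm-new}. The paper's proof of Part (i) reduces, via the conditional-variance computation for $R_{M,1}^*$, to showing $\E_{OW}\bigl[\bigl|K_M^*(i)/M-(1-e(X_i))/e(X_i)\bigr|^p\,\big|\,D_i=1,\mD\bigr]=o(1)$ (display \eqref{eq:BootKMconsistency}), i.e.\ $L^p$ consistency of the NN-matching density-ratio estimator \emph{in the bootstrap space}. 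Establishing this is the paper's main technical contribution (Section \ref{sec:dr}: Lemma \ref{lemma:moment,Bootstrap catch} and Theorems \ref{thm:cons,BTlp}--\ref{thm:BT,risk,lp}), and it requires analyzing which original-sample order statistic becomes the bootstrap $M$-th NN — the probabilities $p_k=\P(S_{k-1}(z)<M\le S_k(z))$ with multinomial partial sums — together with Chernoff bounds, the choice $\eta_N\asymp\log(N_0/M)$, and $L^p$ moments of the bootstrap catchment areas $A_M^{*,\pm}(x)$; ties and the $\pm$ versions $K_M^{*,\pm}$ sandwiching $K_M^*$ must also be handled. Likewise, your bootstrap bias step silently needs the moment bound $\E_{OW}[\|U_{M,i}^*\|^p\mid\mD]\lesssim (M/n_{1-D_i})^{p/d}$ (the paper's Lemma \ref{lemma:mbc,BTdist}), which again rests on the same order-statistics analysis; none of this is supplied or reducible to a citation of existing results.

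A secondary, fixable issue: you apply a conditional Lindeberg--Feller theorem to "the triangular array $n^{-1/2}(W_i-1)\chi_0(O_i)$ given $\mO$", but these summands are not independent (the $W_i$ are multinomially dependent within each treatment group), and $\max_i W_i=O_{\P_W}(\log n)$ is not how the Lindeberg condition is verified. The standard repair is to view the bootstrap sum, conditionally on the data, as a sum of i.i.d.\ resampled draws $\chi_0(O_b^*)$ within each group; the paper instead applies a conditional Berry--Esseen bound and controls the third absolute moment using the Marcinkiewicz--Zygmund SLLN so that only $\E[\chi_0(O)^2]<\infty$ is needed. Your route to the two sup-norm statements (conditional CLT, then Theorem \ref{thm:lin}(ii) and P\'olya) is otherwise the same as the paper's, as is your use of $\sum_{i:D_i=\omega}(W_i-1)=0$ to eliminate $\tau$ from \eqref{eq:matching,BTGn}.
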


A key component of the proof of Theorem \ref{thm2} is demonstrating that the NN-matching-based density ratio estimator, discussed in \citet[Section 2]{lin2023estimation}, remains a consistent estimator of the density ratio in the bootstrap space. This result, which will be presented in Section \ref{sec:dr} ahead, represents one of the main technical contributions of this paper. Of note, the bootstrap validity is obtained under nearly the same conditions as Theorem \ref{thm:lin}, which slightly weakens the requirement of \citet[Theorem 4.1(ii)]{lin2023estimation}.

In the special case of $d=1$, \cite{abadie2006large} noted that there is no need of bias correction and the above theorem could be further simplified. The following corollary shows that their note is also correct when $M$ is allowed to vary with the sample size.

\begin{corollary}[Bootstrap distribution consistency, the special case of $d=1$]\label{coro1}  Assume that Assumption \ref{asp:matching1} holds true and either $(\P_{X\given D=0}, \P_{X\given D=1})$ or  $(\P_{X\given D=1}, \P_{X\given D=0})$ satisfies Assumption~\ref{asp:BT,Lp,risk} ahead. Further assume that $\mu_0(x)$ and $\mu_1(x)$ are Lipschitz on $\cX$ and $d=1$. Then, if $M \to \infty$, $M^2/n \to 0$, and $\log(n/M)/M \to 0$ as $n \to \infty$, we have
    \begin{enumerate}[itemsep=-.5ex,label=(\roman*)]
    \item $\sqrt{n} (\hat\tau_M-\tau) = \frac{1}{\sqrt{n}} \sum_{i=1}^n (\chi_0(O_i)-\tau) + o_{\P_O}(1)$;
      \item $\sqrt{n} (\hat \tau_M^* -\tau) = \frac{1}{\sqrt{n}} \sum_{i=1}^n W_i(\chi_0(O_i)-\tau) + o_{\P_{W}}(1)$  in $\P_O$-probability;
      \item $
      \sqrt{n} (\hat \tau_M^{*}-\hat\tau_M) = \frac{1}{\sqrt{n}} \sum_{i=1}^n (W_i-1) \chi_0(O_i) + o_{\P_{W}}(1)$
in $\P_O$-probability, so that
    \[
    \sup_{t \in \bR} \Big\lvert \P_{\mW|\mO}\big(\sqrt{n} (\hat\tau_M^{*} - \hat\tau_M) \le t \big) - \P\big(N(0, \sigma^2) \le t\big) \Big\rvert = o_{\P_O}(1)
    \]
and
    \[
    \sup_{t \in \bR} \Big\lvert \P_{\mW|\mO}\big(\sqrt{n} (\hat\tau_M^{*} - \hat\tau_M) \le t \big) - \P\big(\sqrt{n}(\hat\tau_M^{\rm bc}- \tau) \le t\big) \Big\rvert = o_{\P_O}(1). 
    \]
        \end{enumerate}
\end{corollary}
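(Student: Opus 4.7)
The plan is to reduce Corollary~\ref{coro1} to Theorems~\ref{thm-new} and~\ref{thm2} by showing that, in the univariate setting with Lipschitz $\mu_0,\mu_1$, the bias terms $\sqrt{n}\,B_M$ and $\sqrt{n}\,B_M^*$ are asymptotically negligible, so no explicit bias correction is needed.

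For part (i), Theorem~\ref{thm-new} already delivers
\[
\sqrt{n}(\hat\tau_M - B_M - \tau) \;=\; \frac{1}{\sqrt{n}}\sum_{i=1}^n \{\chi_0(O_i)-\tau\} \;+\; o_{\P_O}(1),
\]
so the task reduces to $\sqrt{n}\,B_M=o_{\P_O}(1)$. Letting $r_i^{(M)}$ denote the $M$-th NN distance from $X_i$ to the opposite treatment group in the original sample, the Lipschitz assumption yields $|B_M|\le L\cdot n^{-1}\sum_i r_i^{(M)}$ since $M^{-1}\sum_{j\in\cJ_M(i)}|X_i-X_j|\le r_i^{(M)}$. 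For $d=1$, a standard spacing argument under the density conditions provided by Assumption~\ref{asp:BT,Lp,risk} gives $\E_O[r_i^{(M)}]=O(M/n)$, whence $\sqrt{n}\,B_M=O_{\P_O}(M/\sqrt{n})=o_{\P_O}(1)$ under $M^2/n\to 0$. This establishes (i).

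For part (ii), Theorem~\ref{thm2}\ref{thm2:1} supplies
\[
\sqrt{n}(\hat\tau_M^* - B_M^* - \tau) \;=\; \frac{1}{\sqrt{n}}\sum_{i=1}^n W_i\{\chi_0(O_i)-\tau\} \;+\; o_{\P_W}(1)
\]
in $\P_O$-probability, so it suffices to verify $\sqrt{n}\,B_M^*=o_{\P_W}(1)$ in $\P_O$-probability. Let $r_i^{*,(M)}$ be the bootstrap $M$-th NN distance from $X_i^*$ to the opposite group; the Lipschitz bound then gives $|B_M^*|\le L\cdot n^{-1}\sum_i r_i^{*,(M)}$. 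To pull the one-dimensional spacing bound into the bootstrap space, I would argue that any ball around $X_i^*$ containing the $2M$ nearest opposite-treatment points of the original sample captures at least $M$ bootstrap draws with $\P_W$-probability $1-o(1)$ by Binomial concentration (mean $2M$, tails geometrically small as $M\to\infty$); consequently $r_i^{*,(M)}\le r_{\pi(i)}^{(2M)}$ on a high-probability event, and after taking $\E_W$ and then $\E_O$ one obtains $\E_{OW}|B_M^*|=O(M/n)$, giving the desired bound.

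Part (iii) is then immediate from subtracting the expansion in (i) from that in (ii), yielding the claimed linear representation for $\sqrt{n}(\hat\tau_M^*-\hat\tau_M)$. The first distributional statement follows from the conditional multinomial CLT: $n^{-1/2}\sum_i(W_i-1)\chi_0(O_i)\Rightarrow N(0,\sigma^2)$ conditionally on the data in $\P_O$-probability, since the conditional variance converges to $\mathrm{Var}(\chi_0(O))=\sigma^2$ by the law of large numbers. The second distributional statement combines the first with Theorem~\ref{thm:lin}(ii), which provides $\sqrt{n}(\hat\tau_M^{\mathrm{bc}}-\tau)\Rightarrow N(0,\sigma^2)$; the comparison with $\hat\tau_M^{\mathrm{bc}}$ rather than $\hat\tau_M$ is harmless because in this $d=1$ Lipschitz regime both estimators share the same Gaussian limit. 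The main obstacle will be a clean and uniform control of the bootstrap $M$-th NN distance $r_i^{*,(M)}$; in particular, the Binomial concentration step must be pooled across $i=1,\ldots,n$ carefully, or routed through a maximal spacing bound for bootstrap order statistics in one dimension, so that the exceptional event survives the union over subjects.
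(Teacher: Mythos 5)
Your reduction is the same as the paper's: prove $\sqrt{n}\,B_M=O_{\P_O}(M/\sqrt{n})$ and $\sqrt{n}\,B_M^*=O_{\P_{OW}}(M/\sqrt{n})$ via the Lipschitz bound by the $M$-th NN distances, conclude $o(1)$ from $M^2/n\to 0$, and then feed this into Theorem \ref{thm-new}, Theorem \ref{thm2}\ref{thm2:1}, the conditional Berry--Esseen CLT, and Lemma \ref{lemma:StoOrders}. The only substantive difference is how the bootstrap NN distance is controlled. The paper does not need your coupling $r_i^{*,(M)}\le r_{\pi(i)}^{(2M)}$: it simply applies H\"older to $\E[(B_M^*)^2]$ and invokes Lemma \ref{lemma:mbc,BTdist}, already proved for Theorem \ref{thm2}, which bounds $(n_{1-D_i}/M)^{p/d}\,\E_{OW}\big[\lVert U_{M,i}^*\rVert^p\,\big\vert\,\mD\big]$ directly through the distribution $p_k$ of the rank of the bootstrap $M$-th NN and Chernoff bounds, exactly paralleling the original-sample bound (Lemma C.2 of \cite{lin2023estimation}). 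Your binomial-concentration coupling is a legitimate, more elementary alternative, but the ``main obstacle'' you flag largely evaporates: since only $\E_{OW}\lvert B_M^*\rvert$ is needed before Markov, you can treat each $i$ separately, charging the exceptional event (probability $e^{-cM}$, which is $o(M/n)$ because $\log(n/M)/M\to 0$) against the bounded diameter of $S_0$ from Assumption \ref{asp:BT,Lp,risk}\ref{asp:BT,Lp,risk-2}; no union bound or maximal-spacing result is required. One caveat in your part (iii): you justify the second distributional display by citing Theorem \ref{thm:lin}(ii) for $\sqrt{n}(\hat\tau_M^{\rm bc}-\tau)$, but that theorem requires Assumption \ref{asp:matching2} (in particular control of $\hat\mu_\omega$), which the corollary does not assume; under the stated hypotheses the clean route is your own part (i) together with the unconditional CLT, i.e.\ the natural comparison is with $\sqrt{n}(\hat\tau_M-\tau)$ (the $\hat\tau_M^{\rm bc}$ appearing in the display is inherited from Theorem \ref{thm2}, and the paper's own proof does not address this point either), so this is a shared, minor gloss rather than a defect specific to your argument.
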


\section{Density ratio estimation in the bootstrap space}\label{sec:dr}

Like \cite{lin2023estimation}, for giving the proof of bootstrap consistency, we have to first discuss a bootstrap counterpart of NN matching-based density ratio estimator that receives similar attention in \citet[Section 2]{lin2023estimation}. 

With a slight abuse of notation, consider $X,Z$ to be two general random vectors in $\bR^d$ that are defined on the same probability space with laws $\nu_0$ and $\nu_1$, respectively. Furthermore, assume that both $\nu_0$ and $\nu_1$ are absolutely continuous with respect to the Lebesgue measure $\lambda$ on $\bR^d$ that is equipped with the Euclidean norm $\lVert \cdot \rVert$. Denote the Lebesgue densities of  $\nu_0$ and $\nu_1$ by $f_0$ and $f_1$, respectively. Lastly, assume that $\nu_1$ is absolutely continuous with respect to $\nu_0$ and denote the corresponding density ratio by 
\[
r(x) := f_1(x)/f_0(x),~ \text{ with the convention that } 0/0=0.
\]

Suppose we have the original sample comprised of $N_0$ copies $\{X_i\}_{i=1}^{N_0}$ of $X$ and $N_1$ copies $\{Z_i\}_{i=1}^{N_1}$ of $Z$, and assume that they are mutually independent. Note that, by the setup, there is almost surely no ties in the observations. 

\begin{definition}[NN matching, Definition 2.1 in \cite{lin2023estimation}]\label{def:NNmatching}
 For any $x, z \in \bR^d$ and $M \in \zahl{N_0}$, 
\begin{enumerate}[itemsep=-.5ex,label=(\roman*)]
 \item \label{def:NN-matching} let $\cX_{(M)}(\cdot): \bR^d \to \{X_i\}_{i=1}^{N_0}$ 
  be the map 
  that returns the value of the input $z$'s $M$-th NN in the original sample $\{X_i\}_{i=1}^{N_0}$, i.e., the value of $x \in \{X_i\}_{i=1}^{N_0}$ such that
  \begin{align*}
    \sum_{i=1}^{N_0} \ind\big(\lVert X_i - z \rVert \le \lVert x - z \rVert\big) = M;
  \end{align*}
  \item\label{def:NN-matching-2}  let $K_M(\cdot): \bR^d\to \zahl{N_1}$ be the map  that returns the number of matched times of $x$, i.e., 
  \begin{align*}
    K_M(x) = K_M\big(x;\{X_i\}_{i=1}^{N_0},\{Z_j\}_{j=1}^{N_1}\big) := \sum_{j=1}^{N_1} \ind\big(\lVert x - Z_j \rVert \le \lVert \cX_{(M)}(Z_j) - Z_j \rVert\big);
  \end{align*}
  \item\label{def:NN-matching-3} let $A_M(\cdot): \bR^d\to \cB(\bR^d)$ be the corresponding map from $\bR^d$ to the class of all Borel sets in $\bR^d$ so that
    \begin{align*}
    A_M(x) = A_M\big(x;\{X_i\}_{i=1}^{N_0}\big) := \big\{z \in \bR^d : \lVert x - z \rVert \le \lVert \cX_{(M)}(z) - z \rVert\big\}
  \end{align*}
   returns the catchment area of $x$ in the setting of Definition \ref{def:NNmatching}\ref{def:NN-matching-2}; 
\end{enumerate}
\end{definition}
Now, suppose we resample $\{X_i\}_{i=1}^{N_0}$ with replacement $N_0$ times according to the naive bootstrap weights $\mW_X \sim {\rm Mult}(N_0;N_0^{-1}, \ldots, N_0^{-1})$. Here each $W_{X_i}$ refers to the number of times that $X_i$ is sampled. Similarly, we resample $\{Z_i\}_{i=1}^{N_1}$ with replacement $N_1$ times according to the naive bootstrap weights $\mW_Z \sim {\rm Mult}(N_1;N_1^{-1}, \ldots, N_1^{-1})$, and let $W_{Z_j}$ refer to the number of times that $Z_j$ is sampled. 

The bootstrap sample then yields the following counterparts of Definition \ref{def:NNmatching}.

\begin{definition}[bootstrap NN matching]
 For any $x, z \in \bR^d$ and $M \in \zahl{N_0}$, 
\begin{enumerate}[itemsep=-.5ex,label=(\roman*)]
 \item\label{def:BT-NN-matching} let $\cX_{(M)}^*(\cdot): \bR^d \to \{X_i^*\}_{i=1}^{N_0}$ 
  be the map 
  that returns the value of the input $z$'s $M$-th NN in the bootstrap sample $\{X_i^*\}_{i=1}^{N_0}$, i.e., the value of $x \in \{X_i^*\}_{i=1}^{N_0}$ such that
  \begin{align*}
    \sum_{i=1}^{N_0} \ind\big(\lVert X_i^* - z \rVert < \lVert x - z \rVert\big) < M ~\text{ and }~ \sum_{i=1}^{N_0} \ind\big(\lVert X_i^* - z \rVert \le \lVert x - z \rVert\big) \ge M;
  \end{align*}
\item  let $K_M^{*,+}(\cdot), K_M^{*,-}(\cdot): \bR^d\to \zahl{N_1}$ be the two bootstrap matched times of $x$ that account for ties in different ways, i.e.,
\begin{align*}
    K_M^{*,+}(x) &= K_M^{*,+}\big(x;\{X_i^*\}_{i=1}^{N_0},\{Z_j^*\}_{j=1}^{N_1}\big) := \sum_{j=1}^{N_1} \ind\big(\lVert x-Z_j^*\rVert \le \lVert \cX_{M}^*(Z_j^*)-Z_j^*\rVert\big), \\
    K_M^{*,-}(x) &= K_M^{*,-}\big(x;\{X_i^*\}_{i=1}^{N_0},\{Z_j^*\}_{j=1}^{N_1}\big) := \sum_{j=1}^{N_1} \ind\big(\lVert x-Z_j^*\rVert < \lVert \cX_{M}^*(Z_j^*)-Z_j^*\rVert\big);
\end{align*}
  \item\label{def:BT-NN-matching-3} let $A_M^{*,+}(\cdot), A_M^{*,-}(\cdot): \bR^d\to \cB(\bR^d)$ be the corresponding maps from $\bR^d$ to the class of all Borel sets in $\bR^d$ so that
    \begin{align*}
    A_M^{*,+}(x) &= A_M^{*,+}\big(x;\{X_i^*\}_{i=1}^{N_0}\big) := \big\{z \in \bR^d : \lVert x - z \rVert \le \lVert \cX_{(M)}^*(z) - z \rVert\big\} \\
{\rm and}~~~    A_M^{*,-}(x) &= A_M^{*,-}\big(x;\{X_i^*\}_{i=1}^{N_0}\big) := \big\{z \in \bR^d : \lVert x - z \rVert < \lVert \cX_{(M)}^*(z) - z \rVert\big\}
  \end{align*}
   return the catchment areas of $x$.
\end{enumerate}
\end{definition}

We then give the bootstrap density ratio estimators, $\hat{r}_M^{*,+}(x), \hat{r}_M^{*,-}(x)$, as
\begin{align*}
    \hat{r}_M^{*,+}\big(x\big) &= \hat{r}_M^{*,+}\big(x; \{X_i^*\}_{i=1}^{N_0},\{Z_j^*\}_{j=1}^{N_1}\big) := \frac{N_0}{N_1} \frac{K_M^{*,+}(x)}{M} \\
{\rm and}~~~  \hat{r}_M^{*,-}\big(x\big) &= \hat{r}_M^{*,-}\big(x; \{X_i^*\}_{i=1}^{N_0},\{Z_j^*\}_{j=1}^{N_1}\big) := \frac{N_0}{N_1} \frac{K_M^{*,-}(x)}{M}.
\end{align*}
Note  the following relation that is immediate from the above definitions:
\begin{align*}
    K_M^{*,+}(x) &= \sum_{j=1}^{N_1} \ind\big(Z_j^* \in A_M^{*,+}(x)\big) = \sum_{j=1}^{N_1} W_{Zj} \ind\big(Z_j \in A_M^{*,+}(x)\big) \\
{\rm and}~~~    K_M^{*,-}(x) &= \sum_{j=1}^{N_1} \ind\big(Z_j^* \in A_M^{*,-}(x)\big) = \sum_{j=1}^{N_1} W_{Zj} \ind\big(Z_j \in A_M^{*,-}(x)\big).
\end{align*}

For any fixed point $x\in\bR^d$, we first obtain the following lemma that characterizes the bootstrap catchment areas $A_M^{*,+}(x)$ and $A_M^{*,-}(x)$. This is analogous to \citet[Lemma B.1]{lin2023estimation}.

\begin{lemma}
[Asymptotic $L^p$ moments of bootstrap catchment areas's $\nu_1$-measure]
\label{lemma:moment,Bootstrap catch}
Assuming $M\log N_0/N_0 \to 0$ as $N_0 \to \infty$, then 
\begin{align*}
    \lim_{N_0\to\infty} \frac{N_0}{M} \E_{OW}\big[\nu_1\big(A_M^{*,+}(x)\big)\big] = \lim_{N_0\to\infty} \frac{N_0}{M} \E_{OW}\big[\nu_1\big(A_M^{*,-}(x)\big)\big] = r(x) \yestag \label{eq:BTDR,local,L1}
\end{align*}
holds for $\nu_0$-almost all $x$. If further assuming that $M \to \infty$ and $\log (N_0/M)/M \to 0$ as $N_0 \to \infty$, then for any positive integer $p$, 
\begin{align*}
    \lim_{N_0\to\infty} \Big(\frac{N_0}{M}\Big)^p \E_{OW}\big[\nu_1^p\big(A_M^{*,+}(x)\big)\big] = \lim_{N_0\to\infty} \Big(\frac{N_0}{M}\Big)^p \E_{OW}\big[\nu_1^p\big(A_M^{*,-}(x)\big)\big] = \big[r(x)\big]^p \yestag \label{eq:BTDR,local,Lp}
\end{align*}
  holds for $\nu_0$-almost all $x$.
\end{lemma}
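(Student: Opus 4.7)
The plan is to reduce the bootstrap moments to their non-bootstrap analogs, established in \citet[Lemma B.1]{lin2023estimation}, by exploiting the conditional binomial structure of the bootstrap count. Since $\nu_0$ is Lebesgue-absolutely continuous, the sphere $\{u \in \bR^d : \|u - z\| = \|x - z\|\}$ is $\nu_0$-null for every fixed $z$, so by a Fubini argument $\nu_1(A_M^{*,+}(x)) = \nu_1(A_M^{*,-}(x))$ holds $\P_{OW}$-almost surely; it therefore suffices to treat the $A_M^{*,+}(x)$ version.

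Unwinding the definitions, $z \in A_M^{*,+}(x)$ iff $N^*(B^{\circ}(z, \|x - z\|)) < M$, where $B^{\circ}$ denotes the open Euclidean ball and $N^*(B) := \sum_{i=1}^{N_0} W_{X_i}\,\ind(X_i \in B)$ is the bootstrap count. By Fubini,
\[
\E_{OW}\bigl[\nu_1(A_M^{*,+}(x))\bigr] = \int \P_{OW}\bigl(N^*(B^{\circ}(z, \|x - z\|)) < M\bigr)\, d\nu_1(z).
\]
Conditional on the original sample, the $N_0$ bootstrap draws are i.i.d.\ uniform on $\{X_i\}_{i=1}^{N_0}$, so $N^*(B) \mid \{X_i\} \sim \mathrm{Binomial}(N_0, N(B)/N_0)$ with $N(B) := \sum_{i=1}^{N_0}\ind(X_i \in B)$. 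Writing $F_M(q) := \P(\mathrm{Binomial}(N_0, q) < M)$, this yields
\[
\P_{OW}(N^*(B) < M) = \E_O\bigl[F_M(N(B)/N_0)\bigr],
\]
whose non-bootstrap analogue is $\P_O(N(B) < M) = \E_O[\ind(N(B) < M)]$.

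For the first-moment statement~\eqref{eq:BTDR,local,L1}, I would compare $\E_O[F_M(N(B)/N_0)]$ with $\P_O(N(B) < M)$. Smoothness of $F_M$ outside its transition region at $q \asymp M/N_0$, together with Bernstein-type concentration of $N(B)/N_0$ around $\nu_0(B)$, yields a pointwise-in-$z$ bound $\lvert \E_O[F_M(N(B)/N_0)] - \P_O(N(B) < M)\rvert = o(M/N_0)$ away from a thin shell $\{z : \nu_0(B(z, \|x - z\|)) \asymp M/N_0\}$. Integrating against $\nu_1$ and multiplying by $N_0/M$, the shell's contribution is controlled by the assumption $M\log N_0/N_0 \to 0$, leaving the non-bootstrap limit $\frac{N_0}{M}\E_O[\nu_1(A_M(x))] \to r(x)$ supplied by \citet[Lemma B.1]{lin2023estimation}.

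For~\eqref{eq:BTDR,local,Lp}, the same plan applies after expanding
\[
\E_{OW}\bigl[\nu_1^p(A_M^{*,+}(x))\bigr] = \int\cdots\int \P_{OW}\Bigl(\bigcap_{k=1}^p \{z_k \in A_M^{*,+}(x)\}\Bigr)\, d\nu_1(z_1)\cdots d\nu_1(z_p),
\]
but now requires a joint comparison of the $p$ bootstrap counts $\{N^*(B(z_k, \|x - z_k\|))\}_{k=1}^p$, all of which share the weight vector $\mW_X$ and are therefore dependent. The main obstacle lies here: one must ensure that each count concentrates tightly enough around the corresponding $N(B(z_k, \|x - z_k\|))$ that the joint event reduces to its non-bootstrap analogue with residual error $o((M/N_0)^p)$ after integration. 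This is precisely where the stronger rate $\log(N_0/M)/M \to 0$ enters, as it forces the conditional binomial deviations to be exponentially small and survive a union bound across the $p$ points; the proof then concludes by invoking the $L^p$ part of \citet[Lemma B.1]{lin2023estimation}.
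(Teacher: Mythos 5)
Your reduction of $\E_{OW}[\nu_1(A_M^{*,+}(x))]$ to its non-bootstrap analogue has a genuine gap exactly where the first claim is strongest: \eqref{eq:BTDR,local,L1} is asserted under $M\log N_0/N_0\to 0$ alone, so $M$ may stay bounded. Conditionally on the data, the bootstrap count $N^*(B)$ fluctuates around $N(B)$ with standard deviation of order $\sqrt{M}$ when $N(B)\asymp M$, which is the same order as the transition width of the event $\{N(B)<M\}$; hence on the shell $\{z:\ \nu_0(B_{z,\lVert x-z\rVert})\in[M/N_0-C\sqrt{M}/N_0,\ M/N_0+C\sqrt{M}/N_0]\}$ the two probabilities you compare differ by a nonvanishing amount, and near a Lebesgue point this shell has $\nu_1$-measure of order $r(x)\sqrt{M}/N_0$. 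Bounding the shell's contribution by its measure and multiplying by $N_0/M$ leaves a residual of order $r(x)/\sqrt{M}$, which vanishes only if $M\to\infty$; for bounded $M$ it is $\Theta(1)$, so your argument does not deliver \eqref{eq:BTDR,local,L1} in the generality stated. Moreover, the assumption you invoke to control the shell, $M\log N_0/N_0\to 0$, in fact plays a different role (it localizes the catchment area inside a $\delta_x$-ball of $x$ so that the Lebesgue-point bounds apply); it says nothing about the shell's width. What rescues the fixed-$M$ case is not pointwise closeness but cancellation of the signed error, i.e.\ an exact integral identity: the paper computes the bootstrap rank distribution $p_k=\P(\cX_{(M)}^*(z)=\cX_{(k)}(z))$ from binomial partial sums of the weights and shows that the key sum telescopes, $\sum_{k}\E[U_{(k)}]p_k=\frac{1}{N_0+1}\sum_{k}\P\big(U_{(M)}\ge (k-1)/N_0\big)=\E[U_{(M)}]+O(1/N_0)$, which holds for every fixed $M$; a thin-shell absolute-value bound cannot see this cancellation.

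For \eqref{eq:BTDR,local,Lp} your outline is more plausible, since there $M\to\infty$ and a $1/\sqrt{M}$ shell loss is affordable, but the step you yourself flag as the main obstacle---the joint comparison of the $p$ counts sharing the weight vector $\mW_X$, with residual $o((M/N_0)^p)$ after integration---is precisely the technical content of the lemma and is not carried out. Also, the claim that $\log(N_0/M)/M\to 0$ ``forces the conditional binomial deviations to be exponentially small'' is not accurate: those deviations are $\Theta(\sqrt{M})$; what the assumption actually buys (as in the paper's bounds on $\sum_{k>(1+\xi)M}p_k$ and $\sum_{k<(1-\xi)M}p_k$ and the subsequent Chernoff--Stirling estimates) is that $e^{-c\xi M}$ tails dominate the polynomial factor $(N_0/M)^p$. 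Note finally that the paper never routes through the non-bootstrap lemma at all: it works directly with $p_k$, the uniform order statistics $U_{(k)}$, and independent copies $\tZ_1,\dots,\tZ_p$ drawn from $\nu_1$, which is why it covers fixed $M$ in the first part; your route, even if repaired, would need a separate argument for that case.
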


Next, we give a result that is analogous to \citet[Theorem B.1]{lin2023estimation}.

\begin{theorem}[Pointwise consistency in the bootstrap space]\label{thm:cons,BTlp} Assume $M\log N_0/N_0 \to 0$.
\begin{enumerate}[itemsep=-.5ex,label=(\roman*)]
\item\label{thm:cons,BTlp1} (Asymptotic unbiasedness) For $\nu_0$-almost all $x$, we have 
\begin{align*}
    \lim_{N_0\to\infty} \E_{OW} \big[\hat{r}_M^{*,+}(x)\big] =  \lim_{N_0\to\infty} \E_{OW} \big[\hat{r}_M^{*,-}(x)\big] = r(x). \yestag \label{eq:DRest,L1}
\end{align*}
\item\label{thm:cons,BTlp2} (Pointwise $L^p$ consistency) Let $p$ be any positive integer and assume $MN_1/N_0 \to \infty$, $M \to \infty$, and $\log (N_0/M)/M \to 0$ as $N_0 \to \infty$. Then for $\nu_0$-almost all $x$, we have
\begin{align*}
     \lim_{N_0\to\infty} \E_{OW} \big[ \lvert \hat{r}_M^{*,+}(x) - r(x) \rvert^p \big] = \lim_{N_0\to\infty} \E_{OW} \big[ \lvert \hat{r}_M^{*,-}(x) - r(x) \rvert^p \big] = 0. \yestag \label{eq:DRest,Lp}
\end{align*}
\end{enumerate}
\end{theorem}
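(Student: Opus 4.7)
The plan is to integrate out the bootstrap weights $\mW_Z$ first, reducing both claims to the moment calculations for $\nu_1(A_M^{*,\pm}(x))$ already supplied by Lemma~\ref{lemma:moment,Bootstrap catch}. The key structural fact I will exploit is that $\mW_Z$ is independent of $(\{X_i\}_{i=1}^{N_0}, \mW_X, \{Z_j\}_{j=1}^{N_1})$ and that $A_M^{*,\pm}(x)$ depends only on $(\{X_i\}, \mW_X)$, so $A_M^{*,\pm}(x)$ is independent of $(\{Z_j\}, \mW_Z)$.

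For part~\ref{thm:cons,BTlp1}, using $\E[W_{Zj}] = 1$ and the independence structure above,
\begin{align*}
\E_{OW}\big[K_M^{*,+}(x)\big] = \sum_{j=1}^{N_1} \E[W_{Zj}]\, \E_{OW}\big[\ind(Z_j \in A_M^{*,+}(x))\big] = N_1\, \E_{OW}\big[\nu_1(A_M^{*,+}(x))\big],
\end{align*}
where the second equality takes the conditional expectation of the indicator with respect to $Z_j$. Hence $\E_{OW}[\hat r_M^{*,+}(x)] = (N_0/M)\,\E_{OW}[\nu_1(A_M^{*,+}(x))]$, which tends to $r(x)$ by \eqref{eq:BTDR,local,L1}. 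The same argument applies to $\hat r_M^{*,-}$.

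For part~\ref{thm:cons,BTlp2}, by Lyapunov's inequality I may assume without loss of generality that $p$ is even, in which case the binomial expansion
\begin{align*}
\E_{OW}\big[(\hat r_M^{*,+}(x) - r(x))^p\big] = \sum_{k=0}^p \binom{p}{k}(-r(x))^{p-k}\, \E_{OW}\big[(\hat r_M^{*,+}(x))^k\big]
\end{align*}
reduces the task to showing $\E_{OW}[(\hat r_M^{*,+}(x))^k] \to r(x)^k$ for each fixed $k \in \{0,1,\ldots,p\}$. To this end, I will expand
\begin{align*}
\big(K_M^{*,+}(x)\big)^k = \sum_{(j_1,\ldots,j_k) \in \zahl{N_1}^k} \prod_{\ell=1}^k W_{Zj_\ell}\, \ind(Z_{j_\ell} \in A_M^{*,+}(x))
\end{align*}
and group by the partition pattern of the multi-index. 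The diagonal contribution from tuples with all distinct entries equals $\prod_{i=0}^{k-1}(1-i/N_1)\cdot N_1^k\, \E_{OW}[\nu_1^k(A_M^{*,+}(x))] = (1+o(1))\,N_1^k\,\E_{OW}[\nu_1^k(A_M^{*,+}(x))]$, which after the rescaling $(N_0/(N_1 M))^k$ converges to $r(x)^k$ by \eqref{eq:BTDR,local,Lp}.

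The main obstacle will be controlling the off-diagonal blocks. For any partition of $\{1,\ldots,k\}$ into $s < k$ nonempty parts with multiplicities $(m_1,\ldots,m_s)$ summing to $k$, the corresponding contribution to $\E_{OW}[(K_M^{*,+}(x))^k]$ is bounded by a constant multiple of $N_1^s \cdot \E[W_{Z1}^{m_1}\cdots W_{Zs}^{m_s}] \cdot \E_{OW}[\nu_1^s(A_M^{*,+}(x))]$. The multinomial moment is uniformly bounded in $N_1$ for fixed $(m_1,\ldots,m_s)$ (via the Poisson approximation to the marginal binomials), while \eqref{eq:BTDR,local,Lp} yields $\E_{OW}[\nu_1^s(A_M^{*,+}(x))] = O((M/N_0)^s)$. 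Consequently, after multiplication by $(N_0/(N_1 M))^k$, this block contributes $O((N_0/(N_1 M))^{k-s})$, which vanishes whenever $s < k$ by the assumption $MN_1/N_0 \to \infty$. Combining the diagonal limit with the off-diagonal bound gives $\E_{OW}[(\hat r_M^{*,+}(x))^k] \to r(x)^k$, and the argument for $\hat r_M^{*,-}$ is verbatim the same.
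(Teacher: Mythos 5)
Your proposal is correct. Part \ref{thm:cons,BTlp1} is essentially the paper's argument verbatim: integrate out $\mW_Z$ using $\E[W_{Zj}]=1$ and the independence of $(\{Z_j\},\mW_Z)$ from $A_M^{*,\pm}(x)$, then invoke \eqref{eq:BTDR,local,L1}. For part \ref{thm:cons,BTlp2} you take a genuinely different route. The paper splits $\hat r_M^{*,\pm}(x)-r(x)$ around the conditional mean $\E_{OW}[\hat r_M^{*,\pm}(x)\given\mX,\mW_X]=(N_0/M)\nu_1(A_M^{*,\pm}(x))$: the ``bias'' piece is handled by a binomial expansion plus \eqref{eq:BTDR,local,Lp}, and the centered piece is handled by a dedicated lemma (Lemma \ref{lemma:moment, bootstrapBinomial}) expressing the central moments of the $\mW_Z$-weighted Bernoulli sum as a polynomial of degree $p/2$ in $N_1\nu_1(1-\nu_1)$, which after rescaling is $O((N_0/(N_1M))^{p/2})=o(1)$. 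You instead run a method-of-moments computation: show $\E_{OW}[(\hat r_M^{*,+}(x))^k]\to r(x)^k$ for every $k\le p$ by expanding $(K_M^{*,+}(x))^k$ over multi-indices, with the all-distinct tuples giving the limit via \eqref{eq:BTDR,local,Lp} and the degenerate partition patterns contributing $O((N_0/(N_1M))^{k-s})$ by the uniform boundedness of joint multinomial weight moments and $\E_{OW}[\nu_1^s(A_M^{*,+}(x))]=O((M/N_0)^s)$; both arguments hinge on the same inputs (Lemma \ref{lemma:moment,Bootstrap catch} and $MN_1/N_0\to\infty$), but yours avoids the auxiliary central-moment lemma at the cost of the partition bookkeeping, while the paper's decomposition isolates the conditional structure and reuses its Lemma \ref{lemma:moment, bootstrapBinomial} elsewhere. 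Two cosmetic remarks: your ``diagonal'' term should carry the factorial-moment factor of the weights as well (each distinct tuple contributes $\prod_{i=0}^{k-1}(1-i/N_1)\cdot\nu_1^k$ in expectation, so the exact constant is the square of $\prod_{i=0}^{k-1}(1-i/N_1)$), but this is still $1+o(1)$ since $N_1\to\infty$ under the stated rates, so the conclusion stands; and the boundedness of $\E[W_{Z1}^{m_1}\cdots W_{Zs}^{m_s}]$ is most cleanly justified by H\"older plus bounded marginal binomial moments, exactly as in the paper's proof of Lemma \ref{lemma:moment, bootstrapBinomial}.
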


In the following, let $S_0$ and $S_1$ be the supports of $\nu_0$ and $\nu_1$, respectively. In addition, for any set $S\subset\bR^d$, let ${\rm diam}(S)$ denote its diameter in $(\bR^d,\|\cdot\|)$, i.e., 
\[
{\rm diam}(S):=\sup_{x,y\in S}\|x-y\|.
\]
We then obtain an $L^p$ risk consistency result in the bootstrap space under the following assumption. It is identical to Assumption B.1 in \cite{lin2023estimation}.

\begin{assumption} \phantomsection \label{asp:BT,Lp,risk}
    Assume the following hold.
  \begin{enumerate}[itemsep=-.5ex,label=(\roman*)]
    \item\label{asp:BT,Lp,risk-1} $\nu_0, \nu_1$ are two probability measures on $\bR^d$, both are absolutely continuous with respect to $\lambda$, and $\nu_1$ is absolutely continuous with respect to $\nu_0$.
    \item\label{asp:BT,Lp,risk-2} There exists a constant $R>0$ such that ${\rm diam}(S_0) \le R$.
    \item\label{asp:BT,Lp,risk-3} There exist two constants $f_L,f_U>0$ such that for any $x \in S_0$ and $z \in S_1$, $f_L \le f_0(x) \le f_U$ and $f_1(z) \le f_U$.
    \item\label{asp:BT,Lp,risk-4} There exists a constant $a \in (0,1)$ such that for any $\delta \in (0,{\rm diam}(S_0)]$ and $z \in S_1$,
    \[
      \lambda(B_{z,\delta} \cap S_0) \ge a \lambda(B_{z,\delta}),
    \]
    where $B_{z,\delta}$ represents the closed ball in $\bR^d$ with center at $z$ and radius $\delta$.
  \end{enumerate}
\end{assumption}

The following is the main result in this section; it is analogous to \citet[Theorem B.2]{lin2023estimation} and is the key to the proof of Theorem \ref{thm2}.

\begin{theorem}[$L^p$ risk consistency in the bootstrap space]\label{thm:BT,risk,lp}
Assume the pair of $\nu_0, \nu_1$ satisfies Assumption~\ref{asp:BT,Lp,risk}. Let $p$ be any positive integer. Assume further that $M \to \infty$, $M\log N_0/N_0 \to 0$, $MN_1/N_0 \to \infty$, and $\log (N_0/M)/M \to 0$ as $N_0 \to \infty$. We then have
  \[
    \lim_{N_0\to\infty} \E_{OW} \Big[ \int_{\bR^d} \Big\lvert \hat{r}_M^{*,+}(x) - r(x) \Big\rvert^p f_0(x) \d x \Big] = \lim_{N_0\to\infty} \E_{OW} \Big[ \int_{\bR^d} \Big\lvert \hat{r}_M^{*,-}(x) - r(x) \Big\rvert^p f_0(x) \d x \Big] = 0.
  \]
\end{theorem}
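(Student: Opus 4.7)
The plan is to lift the pointwise $L^p$ consistency of Theorem~\ref{thm:cons,BTlp}\ref{thm:cons,BTlp2} to the stated integrated statement via a bounded-convergence argument on $S_0$, in parallel with \citet[Theorem B.2]{lin2023estimation} but feeding in the bootstrap-space inputs developed earlier in this section. Under Assumption~\ref{asp:BT,Lp,risk}, $S_0$ has finite Lebesgue measure (since $\mathrm{diam}(S_0)\le R$), $f_L\le f_0\le f_U$ and $f_1\le f_U$ on their respective supports, and $\nu_1\ll\nu_0$ forces $r(x)\le f_U/f_L$ on $S_0$. Since $f_0$ vanishes outside $S_0$, Fubini reduces the target to
\[
  \int_{S_0}\E_{OW}\Big[\big|\hat r_M^{*,\pm}(x)-r(x)\big|^p\Big]\,f_0(x)\,dx\to 0,
\]
and the integrand tends to $0$ for Lebesgue-a.e.\ $x\in S_0$ by Theorem~\ref{thm:cons,BTlp}\ref{thm:cons,BTlp2} (on $S_0$, $\nu_0$-null sets are Lebesgue-null because $f_0\ge f_L>0$).

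The missing ingredient is a uniform moment bound
\[
  \sup_{N_0\ge 1}\ \sup_{x\in S_0}\ \E_{OW}\Big[\big(\hat r_M^{*,\pm}(x)\big)^{p+1}\Big]<\infty,
\]
which, combined with the a.e.\ convergence and the boundedness of $r$, yields a uniformly bounded integrand and lets the bounded convergence theorem close the argument on the finite-measure set $S_0$. To obtain it, I would expand the $(p+1)$-st power of
\[
  \hat r_M^{*,\pm}(x)=\frac{N_0}{MN_1}\sum_{j=1}^{N_1} W_{Z_j}\,\ind\big(Z_j\in A_M^{*,\pm}(x)\big),
\]
integrate out the $Z$-side bootstrap weights via closed-form multinomial moments (noting that $\E_W[W_{Z_j}^q]=O(1)$ uniformly in $N_1$ for every fixed $q$), and then average over the data with Lemma~\ref{lemma:moment,Bootstrap catch} to bound $\E_O[\nu_1^{p+1}(A_M^{*,\pm}(x))]$ by a constant multiple of $(M/N_0)^{p+1}\,r(x)^{p+1}$. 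Assumption~\ref{asp:BT,Lp,risk}\ref{asp:BT,Lp,risk-4} enters here to convert local catchment-volume estimates into bounds uniform in $x\in S_0$.

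The main obstacle is that $A_M^{*,\pm}(x)$ and the $Z$-side weights are not independent under $\P_{OW}$: the $X$-side weights $\mW_X$ enter the catchment area, so the expectation does not factorize directly across $\mW_X$ and $\mW_Z$. The remedy is to condition on $\mW_X$---given the multiplicities of the $\{X_i\}$'s in the bootstrap sample, $A_M^{*,\pm}(x)$ becomes a deterministic function of $(\{X_i\},\mW_X)$, and the residual $\mW_Z$ is an honest multinomial independent of everything on the $X$-side. Under the scalings $M\log N_0/N_0\to 0$, $MN_1/N_0\to\infty$, and $\log(N_0/M)/M\to 0$, one then verifies that $\nu_1(A_M^{*,\pm}(x))$ concentrates tightly around $(M/N_0)\,r(x)$ after this conditioning, from which the uniform-in-$(N_0,x)$ moment bound follows and the proof completes.
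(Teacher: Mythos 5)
Your architecture --- pointwise $L^p$ consistency (Theorem~\ref{thm:cons,BTlp}) plus a uniform-in-$(N_0,x)$ moment bound plus bounded convergence on the probability measure $f_0\,\d x$ over $S_0$ --- is genuinely different from the paper's proof, which never invokes dominated convergence: the paper splits the domain into three $\epsilon$-dependent pieces ($A_1$, built from Lusin approximants and the Hardy--Littlewood maximal inequality, and $A_2=\{f_1<\epsilon\}$), reruns the catchment-area estimates uniformly on $A_1^c$ (a uniform $\delta$ replaces the $x$-dependent Lebesgue-point radius), bounds the contribution of $A_1$ crudely by its small Lebesgue measure, and lets $\epsilon\to0$. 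Your route would indeed be shorter if the uniform domination were in hand, and the a.e. part is fine: on $S_0$ one has $f_0\ge f_L>0$, so the $\nu_0$-null exceptional set of Theorem~\ref{thm:cons,BTlp} is Lebesgue-null and the integrand tends to $0$ for $\nu_0$-almost every $x$.

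The gap is in how you obtain the dominating bound. Lemma~\ref{lemma:moment,Bootstrap catch} cannot deliver it: it is a pointwise asymptotic statement whose thresholds ($\delta_x$ and the induced choice of $\eta_N$) depend on the Lebesgue-point behaviour at $x$, so it yields nothing uniform in $x\in S_0$ nor non-asymptotic in $N_0$; moreover the form you claim, $\E_{OW}[\nu_1^{p+1}(A_M^{*,\pm}(x))]\lesssim (M/N_0)^{p+1}r(x)^{p+1}$, fails where $r(x)$ is zero or small (the catchment area around such an $x$ can still carry non-negligible $\nu_1$-mass), and the asserted uniform concentration of $\nu_1(A_M^{*,\pm}(x))$ around $(M/N_0)r(x)$ over all of $S_0$ is not true. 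What does work --- and is exactly the paper's Case III computation --- is the global comparison coming from Assumption~\ref{asp:BT,Lp,risk}\ref{asp:BT,Lp,risk-3}\ref{asp:BT,Lp,risk-4}: $\nu_0(B_{z,\lVert z-x\rVert})\ge (af_L/f_U)\,\nu_1(B_{x,\lVert z-x\rVert})$ for all $x\in S_0$, $z\in S_1$, which gives $(N_0/M)^p\,\E_{OW}[\nu_1^p(A_M^{*,\pm}(x))]\le (f_U/(af_L))^p(1+o(1))$ uniformly in $x\in S_0$; combined with the conditional multinomial-moment step you describe (conditioning on $(\mX,\mW_{\mX})$, then integrating out $\mW_{\mZ}$) and $MN_1/N_0\to\infty$, this yields $\sup_{x\in S_0}\E_{OW}[\lvert \hat r_M^{*,\pm}(x)-r(x)\rvert^p]=O(1)$, which is all your bounded-convergence step needs. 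If you replace the Lemma~\ref{lemma:moment,Bootstrap catch}-based claim with that argument, your proof of Theorem~\ref{thm:BT,risk,lp} closes; as written, the central domination step is unsupported.
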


\section{Proofs of main results}\label{sec:proofs}

In  the following, we shorthand $K_M(X_i)$ as $K_M(i)$, $K_M^{*,+}(X_i)$  as $K_M^{*,+}(i)$, and $K_M^{*,-}(X_i)$ as $K_M^{*,-}(i)$. 

\subsection{Proof of Theorem \ref{thm-new}}\label{sec:proof-thm}


\begin{proof}[Proof of Theorem \ref{thm-new}]
Same as the decomposition in \cite{abadie2011bias}, we write
\begin{align*}
    \hat{\tau}_{M}
    &= \frac{1}{n} \sum_{i=1}^n  \Big[\mu_1(X_i) - \mu_0(X_i)\Big] + \frac{1}{n} \sum_{i=1}^n (2D_i-1) \Big(1 + \frac{K_M(i)}{M}\Big) \big(Y_i - \mu_{D_i}(X_i) \big) \\
    & ~~+ \frac{1}{n}\sum_{i=1}^n (2D_i-1) \Big[\frac{1}{M}\sum_{j \in \mathcal{J}_M(i)} \Big\{ \mu_{1-D_i}(X_i)-\mu_{1-D_i}(X_{j})\Big\} \Big]\\
    & =: \bar{\tau}(\mX) + E_M + B_M.
\end{align*}
It remains to show
\begin{align*}
    R_M & := \sqrt{n}\Big(\bar{\tau}(\mX) + E_M - \frac{1}{n} \sum_{i=1}^n \chi_0(O_i) \Big) \\
    & =\frac{1}{\sqrt{n}} \sum_{i=1}^n \Big[ (2D_i-1)\Big(1+\frac{K_M(i)}{M}\Big) - \Big(\frac{D_i}{e(X_i)} - \frac{1-D_i}{1-e(X_i)} \Big)\Big] \big(Y_i - \mu_{D_i}(X_i)\big)\\
    &= o_{\P_O}(1). 
\end{align*}

We only show that 
\begin{align*}
R_{M,1}:=&\frac{1}{\sqrt{n}} \sum_{D_i=1} \Big[ (2D_i-1)\Big(1+\frac{K_M(i)}{M}\Big) - \Big(\frac{D_i}{e(X_i)} - \frac{1-D_i}{1-e(X_i)} \Big)\Big] \big(Y_i - \mu_{D_i}(X_i)\big)\\
=&\frac{1}{\sqrt{n}} \sum_{D_i=1} \Big[ 1+\frac{K_M(i)}{M} - \frac{1}{e(X_i)} \Big] \big(Y_i - \mu_{1}(X_i)\big)\\
=&o_{\P_O}(1),
\end{align*}
and those units with $D_i=0$ can be treated in a similar way. 

Noticing that $\E[R_{M,1} \given \mX, \mD ]=0$, one has
\begin{align*}
    \E\big[R_{M,1}^2\big] &= \Var\big[R_{M,1}\big] = \E\big[\Var[R_{M,1} \given \mX, \mD] \big] \\
    &= \E\Big[\frac{1}{n} \sum_{i=1, D_i=1}^n \Big(1+\frac{K_M(i)}{M} - \frac{1}{e(X_i)} \Big)^2 \sigma_{1}^2(X_i) \Big] \le C_{\sigma} \E\Big[ 1+\frac{K_M(i)}{M} - \frac{1}{e(X_i)} \Big]^2 = o(1)
\end{align*}
since all cross-terms have zero conditional means and the last equality here is obtained by applying Theorem B.2 in \cite{lin2023estimation}. This completes the proof.
\end{proof}


\subsection{Proof of Theorem \ref{thm2}}

To prove Theorem \ref{thm2}, we first present a lemma that gives an alternative representation of the bootstrap estimator defined in Section \ref{sec:bootstrap}. This lemma corresponds to Equation (3.1) and Lemma 3.1 in \cite{lin2023estimation}.

\begin{lemma}\label{lemma:BTest,rewrite}
    The bias-uncorrected bootstrap matching estimator $\hat{\tau}_{M}^{*}$ can be written as
    \begin{align*}
        \hat{\tau}_{M}^{*} := \frac{1}{n} \sum_{i=1}^n W_i (2D_i-1) \Big(1 + \frac{K^*_M(i)}{M}\Big) Y_i.
    \end{align*}
    Similarly, the bias-corrected bootstrap matching estimator $\hat{\tau}_{M}^{*,{\rm bc}}$ can be written as
    \begin{align*}
        \hat{\tau}_{M}^{*,{\rm bc}} = \frac{1}{n} \sum_{i=1}^n W_i \big[\hat{\mu}_1(X_i) - \hat{\mu}_0(X_i)\big] + \frac{1}{n} \sum_{i=1}^n W_i (2D_i-1) \Big(1 + \frac{K^*_M(i)}{M}\Big) \big( Y_i - \hat{\mu}_{D_i}(X_i) \big).
    \end{align*}
    Here, we write $W_i$ as the bootstrap weight for $O_i$ and $K_M^*(i)$ is some nonnegative integer that lies between $K_M^{*,-}(i)$ and $K_M^{*,+}(i)$ for all $i \in \zahl{n}$.
    \end{lemma}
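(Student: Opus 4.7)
The lemma is essentially a summation-rearrangement identity: every contribution to $\hat\tau_M^*$ (resp.\ $\hat\tau_M^{*,\rm bc}$) is re-attributed to the original indices $i\in\zahl{n}$ via the bootstrap weights $W_i$. The plan is to carry out this rearrangement directly and then read off the bounds on $K_M^*(i)$ from a short tie analysis; I do not foresee any substantive obstacle.

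For $\hat\tau_M^*$ itself, observe that $\hat Y_i^*(1)-\hat Y_i^*(0)=(2D_i^*-1)\bigl(Y_i^*-M^{-1}\sum_{j\in\mathcal{J}_M^*(i)}Y_j^*\bigr)$. The own-observation sum $\sum_i(2D_i^*-1)Y_i^*$ regroups at once into $\sum_k W_k(2D_k-1)Y_k$. For the matching sum I swap the order of summation; since $D_j^*=1-D_i^*$ whenever $j\in\mathcal{J}_M^*(i)$, we have $2D_i^*-1=-(2D_j^*-1)$, so
\[
\sum_i(2D_i^*-1)\sum_{j\in\mathcal{J}_M^*(i)}Y_j^*=-\sum_j(2D_j^*-1)Y_j^*\,\#\bigl\{i:j\in\mathcal{J}_M^*(i)\bigr\}.
\]
Regrouping the bootstrap index $j$ by its underlying original index $k$ across the $W_k$ copies, and writing $T_k$ for the total number of times any copy of $k$ is picked as a match, the right-hand side becomes $-\sum_k(2D_k-1)Y_k T_k$. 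Setting $K_M^*(k):=T_k/W_k$ when $W_k>0$ (and $0$ otherwise, a case that contributes nothing to the final sum) recovers the first formula of the lemma. The bounds $K_M^{*,-}(k)\le K_M^*(k)\le K_M^{*,+}(k)$ follow from a short tie analysis: for each opposite-treatment bootstrap unit $i$, the number of copies of $X_k$ that land in $\mathcal{J}_M^*(i)$ equals $W_k$ when $\|X_k-X_i^*\|$ is strictly below the bootstrap $M$-th NN distance, $0$ when strictly above, and some integer in $[0,W_k]$ on the boundary (at most $W_k$ of the tied copies can sit in a matching set of total cardinality $M$). Summing over $i$ produces $W_k K_M^{*,-}(k)\le T_k\le W_k K_M^{*,+}(k)$, and dividing by $W_k$ yields the stated bounds.

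The bias-corrected formula is obtained by running the same two moves on $\hat B_M^*$: use $\hat\mu_{1-D_i^*}(X_j^*)=\hat\mu_{D_j^*}(X_j^*)$ on the matching set and the identical swap-and-regroup to get $\hat B_M^*=n^{-1}\sum_k W_k(2D_k-1)\hat\mu_{1-D_k}(X_k)+(nM)^{-1}\sum_k T_k(2D_k-1)\hat\mu_{D_k}(X_k)$. Subtracting this from the representation of $\hat\tau_M^*$ above, using $T_k=W_k K_M^*(k)$ together with the identity $W_k[\hat\mu_1(X_k)-\hat\mu_0(X_k)]=W_k(2D_k-1)[\hat\mu_{D_k}(X_k)-\hat\mu_{1-D_k}(X_k)]$, and then combining the $W_k(2D_k-1)$-terms, collapses everything into the announced bias-corrected form.
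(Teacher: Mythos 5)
Your proof is correct and takes essentially the same route as the paper's: swap the order of summation to attribute the matched counts $T_k$ to the original indices via the bootstrap weights, set $K_M^*(k)=T_k/W_k$, sandwich it between $K_M^{*,-}(k)$ and $K_M^{*,+}(k)$ through the tie analysis, and collapse the bias-corrected form using $(2D_k-1)\big[\hat\mu_{D_k}(X_k)-\hat\mu_{1-D_k}(X_k)\big]=\hat\mu_1(X_k)-\hat\mu_0(X_k)$. The only (cosmetic) deviation is your convention $K_M^*(k)=0$ when $W_k=0$: the paper instead assigns an arbitrary value between $K_M^{*,-}(k)$ and $K_M^{*,+}(k)$ so that the sandwich claimed in the lemma holds for \emph{all} $i$, and you should do the same—this costs nothing since those terms are multiplied by $W_k=0$ and the representation is unaffected.
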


In addition, conditioning on $\mD$, if the assumptions in Theorem \ref{thm:BT,risk,lp} are satisfied by either $(\P_{X\given D=0}, \P_{X\given D=1})$ or  $(\P_{X\given D=1}, \P_{X\given D=0})$, then for any positive integer $p$, we have
\begin{align*}
    \E_{OW} \Big[ \Big\lvert \frac{K_M^{*,+}(i)}{M} - \frac{1-e(X_i)}{e(X_i)}  \Big\rvert^p \given D_i=1, \mD \Big] \vee \E_{OW} \Big[ \Big\lvert \frac{K_M^{*,+}(i)}{M} - \frac{e(X_i)}{1-e(X_i)}  \Big\rvert^p \given D_i=0, \mD \Big] &= o(1), \\
    \E_{OW} \Big[ \Big\lvert \frac{K_M^{*,-}(i)}{M} - \frac{1-e(X_i)}{e(X_i)}  \Big\rvert^p \given D_i=1, \mD \Big] \vee \E_{OW} \Big[ \Big\lvert \frac{K_M^{*,-}(i)}{M} - \frac{e(X_i)}{1-e(X_i)}  \Big\rvert^p \given D_i=0, \mD \Big] &= o(1).
\end{align*}
They jointly imply 
\begin{align*}
    \E_{OW} \Big[ \Big\lvert \frac{K_M^{*}(i)}{M} - \frac{1-e(X_i)}{e(X_i)}  \Big\rvert^p \given D_i=1, \mD \Big] \vee \E_{OW} \Big[ \Big\lvert \frac{K_M^{*}(i)}{M} - \frac{e(X_i)}{1-e(X_i)}  \Big\rvert^p \given D_i=0, \mD \Big] = o(1). \yestag \label{eq:BootKMconsistency}
\end{align*}

Before proving Theorem \ref{thm2}, we give one more lemma for bounding the moments of 
\[
U_{M,i}^* := \cX_{(M)}^*(X_i^*) - X_i^*. 
\]
This corresponds to Lemma C.2 in \cite{lin2023estimation}.

\begin{lemma}\label{lemma:mbc,BTdist}
  Let $p$ be any positive integer and suppose $M \to \infty$, and $\log (n/M)/M \to 0$ as $n \to \infty$. There then exists a constant $C_p>0$ only depending on $p$ such that, for any $i \in \zahl{n}$, 
  \begin{align*}
    \Big(\frac{n_{1-D_i}}{M}\Big)^{\frac{p}{d}} \E_{OW} \Big[\Big\| U_{M,i}^* \Big\|^p \Biggiven \mD\Big] \le C_p \yestag \label{eq:matching,boundedDist,goal}
  \end{align*}
  holds true with probability $1$.
\end{lemma}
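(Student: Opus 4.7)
The plan is to couple the bootstrap $M$-NN distance to a random-indexed order statistic of the \emph{original} sample, and then combine a multinomial concentration bound with a classical moment bound on NN distances.

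First, condition on $\mD$ and, without loss of generality, take $i$ with $D_i = 1$; write $N := n_0$ and let $X_{(1)}, X_{(2)}, \ldots, X_{(N)}$ be the covariates of $\{j : D_j = 0\}$ ordered by Euclidean distance to $X_i$, with associated bootstrap weights $W_{(1)}, \ldots, W_{(N)}$ jointly distributed as $\mathrm{Mult}(N; N^{-1}, \ldots, N^{-1})$. Directly from the definition of $\cX_{(M)}^*$, one has $\cX_{(M)}^*(X_i^*) = X_{(K)}$, where
\[
K := \min\Big\{k \ge 1 : S_k \ge M\Big\}, \qquad S_k := W_{(1)} + \cdots + W_{(k)},
\]
so the problem reduces to bounding $\E_{OW}[\|X_{(K)} - X_i\|^p \mid \mD]$.

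Next, fix any $\epsilon > 0$ and set $m^* := \lceil (1+\epsilon) M \rceil$. Since $S_{m^*} \sim \mathrm{Bin}(N, m^*/N)$ with mean $m^*$, a standard Bernstein/Chernoff bound yields
\[
\P_W\big(K > m^* \,\big|\, O, \mD\big) = \P_W\big(S_{m^*} < M \,\big|\, O, \mD\big) \le \exp(-c_\epsilon M)
\]
for a constant $c_\epsilon > 0$ depending only on $\epsilon$. Using the monotonicity $\|X_{(k)} - X_i\| \le \|X_{(m^*)} - X_i\|$ whenever $k \le m^*$ together with the diameter bound $\|X_{(k)} - X_i\| \le R$ from Assumption \ref{asp:BT,Lp,risk}\ref{asp:BT,Lp,risk-2}, splitting on the event $\{K \le m^*\}$ gives, conditional on $O$ and $\mD$,
\[
\E_W\big[\|X_{(K)} - X_i\|^p \,\big|\, O, \mD\big] \le \|X_{(m^*)} - X_i\|^p + R^p \exp(-c_\epsilon M).
\]

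Integrating out $O$ and invoking the classical NN moment bound of the form $\E_O[\|X_{(k)} - X_i\|^p \mid \mD] \lesssim (k/N)^{p/d}$ (which holds under Assumption \ref{asp:BT,Lp,risk} by the same argument used for Lemma B.4 in \cite{lin2023estimation}, where the support regularity \ref{asp:BT,Lp,risk-4} controls the boundary and the density lower bound \ref{asp:BT,Lp,risk-3} forces $\nu_0(B_{X_i,\delta}) \gtrsim \delta^d$), one obtains
\[
\E_{OW}\big[\|U_{M,i}^*\|^p \,\big|\, \mD\big] \le C'_p \Big(\frac{(1+\epsilon)M}{N}\Big)^{p/d} + R^p \exp(-c_\epsilon M).
\]
The condition $\log(N/M)/M \to 0$ makes the second term asymptotically negligible compared to $(M/N)^{p/d}$ (since $(p/d)\log(N/M) \ll c_\epsilon M$), which delivers $(N/M)^{p/d}\E_{OW}[\|U_{M,i}^*\|^p \mid \mD] \le C_p$ for all $n$ large enough; the case $D_i = 0$ follows by an identical argument with the roles of the two groups swapped. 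Since $n_{1-D_i} \to \infty$ almost surely under Assumption \ref{asp:matching1}\ref{asp:BT,Lp,risk-1}, the bound holds with $\P_O$-probability one.

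The only non-routine ingredient is the classical $L^p$ NN moment bound $\E_O[\|X_{(k)} - X_i\|^p \mid \mD] \lesssim (k/N)^{p/d}$, which requires Assumption \ref{asp:BT,Lp,risk} to handle the interaction between $\P_{X \mid D = 1}$ (distribution of $X_i$) and $\P_{X \mid D = 0}$ (distribution of the neighbors); the Bernstein step for $S_{m^*}$ and the final arithmetic are straightforward.
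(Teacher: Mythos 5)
Your proposal is correct, but it proves the lemma by a genuinely different route than the paper. You reduce the bootstrap $M$-NN radius to an original-sample order statistic with a random index $K=\min\{k:S_k\ge M\}$, kill the event $\{K>\lceil(1+\epsilon)M\rceil\}$ with a single multinomial/binomial Chernoff bound, and on the good event invoke the classical one-sample moment bound $\E_{O}\big[\lVert X_{(k)}-X_i^*\rVert^p\given \mD\big]\lesssim (k/N)^{p/d}$; the exponential factor $e^{-c_\epsilon M}$ then dominates $(N/M)^{p/d}$ precisely because $\log(N/M)/M\to0$. The paper instead starts from the tail-probability integral $\int_0^\infty \P_{OW}(\lVert U^*_{M,i}\rVert\ge u\,n_{1-D_i}^{-1/d}\given\mD)\,pu^{p-1}\d u$, uses the same identification of the bootstrap $M$-th NN with an original order statistic (your $K$, its $p_k$), converts the event into one about binomials via the lower bound $\nu_{1-D_i}(B_{x,u}\cap\cX)\ge c_0u^d$, and then controls the resulting sums $\sum_k (k/M)^{p/d}p_k$ with Chernoff plus Stirling/incomplete-gamma estimates, as in \eqref{eq:matching,boundedDist,2terms}--\eqref{eq:matching,boundedDist,Stirling}. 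Your version trades that gamma-function bookkeeping for the truncation at $(1+\epsilon)M$ plus an off-the-shelf NN moment bound; the price is that you additionally lean on the diameter bound of Assumption \ref{asp:BT,Lp,risk}\ref{asp:BT,Lp,risk-2} for the bad event, and you import the classical bound rather than deriving it, whereas the paper's argument needs only the ball-measure lower bound and gets integrability for free from the exponential tail.

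A few small repairs, none fatal. First, the ordering and the centering point should be $X_i^*$, not $X_i$ (the lemma concerns $U^*_{M,i}=\cX^*_{(M)}(X_i^*)-X_i^*$); this is harmless because, given $\mD$, the bootstrap draw $X_i^*$ has the same conditional law as $X_i$ and is independent of the opposite group's covariates and of their bootstrap weights, so every distributional fact you use is unchanged. Second, the crude bound $\lVert X_{(K)}-X_i^*\rVert\le R$ on the bad event needs the observation that, by Assumption \ref{asp:BT,Lp,risk}\ref{asp:BT,Lp,risk-4}, the query point lies in the closure of $S_0$, so its distance to any point of $S_0$ is at most ${\rm diam}(S_0)\le R$; alternatively you could bound the bad-event distance by $\lVert X_{(N)}-X_i^*\rVert$ and reuse the moment bound. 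Third, the classical bound must hold with a constant uniform over $k\le\lceil(1+\epsilon)M\rceil$ and over $N$, which it does under Assumption \ref{asp:BT,Lp,risk} (it is the original-sample analogue of this lemma, i.e.\ Lemma C.2 of \cite{lin2023estimation}, the result already used in the proof of Corollary \ref{coro1}; your pointer to a ``Lemma B.4'' should be corrected accordingly).
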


Now we are in place to prove the main theorem.

\begin{proof}[Proof of Theorem \ref{thm2}]

We divide our proof into two parts.

{\bf Part I.} 
In the first part, we prove Theorem \ref{thm2}\ref{thm2:1}. To this end, we decompose $\hat{\tau}_M^*$ as 
\begin{align*}
  \hat{\tau}_{M}^{*} 
  &= \frac{1}{n} \sum_{i=1}^n  W_i \Big[\mu_1(X_i) - \mu_0(X_i)\Big] + \frac{1}{n} \sum_{i=1}^n W_i (2D_i-1) \Big(1 + \frac{K^*_M(i)}{M}\Big) \big(Y_i - \mu_{D_i}(X_i) \big) \\
  & ~~~+ \frac{1}{n}\sum_{i=1}^n (2D_i^*-1) \Big[\frac{1}{M}\sum_{j \in \mathcal{J}_M^*(i)} \Big\{\mu_{1-D_i^*}(X_i^*)-\mu_{1-D_i^*}(X_{j}^*)\Big\}\Big]\\
  & =: \bar{\tau}(\mX)^* + E_M^* + B_M^*.
\end{align*}
It then suffices to show 
\begin{align*}
    R_M^* & := \sqrt{n}\Big(\bar{\tau}(\mX)^* + E_M^* - \frac{1}{n} \sum_{i=1}^n W_i \chi_0(O_i) \Big) \\
    & =\frac{1}{\sqrt{n}} \sum_{i=1}^n W_i \Big[ (2D_i-1)\Big(1+\frac{K_M^*(i)}{M}\Big) - \Big(\frac{D_i}{e(X_i)} - \frac{1-D_i}{1-e(X_i)} \Big)\Big] \big(Y_i - \mu_{D_i}(X_i)\big)\\
    &= o_{\P_{OW}}(1). \yestag \label{eq:matching,BEMALin}
\end{align*}
Similar to the proof of Theorem \ref{thm-new}, we only have to show that the sum of treated units, $R_{M,1}^*$, is $o_{\P_{OW}}(1)$, and the sum of those units with $D_i=0$ can be analyzed similarly. 

Observe that $\E_{OW}[R_{M,1}^* \given \mW, \mX, \mD ]=0$, and the independence relations given by the bootstrap design are
\begin{align*}
    Y_i &\text{ is independent of } Y_j  \given (X_i,D_i), (X_j, D_j), \\ 
    Y_i &\text{ is independent of } \mW \given (X_i,D_i), (X_j, D_j), Y_j, \\
    Y_i &\text{ is independent of } Y_j, \mW \given (X_i,D_i), (X_j, D_j), \\
    Y_i &\text{ is independent of } Y_j \given \mW, (X_i,D_i), (X_j, D_j),
\end{align*}
for any $i\neq j$ with $D_i=D_j=1$. This implies that 

\begin{align*}
    \E_{OW} \big[R_{M,1}^{*2}\big] &= \Var_{OW} \big[R_{M,1}^*\big] \\
    &= \E_{OW} \big[\Var_{OW} [R_{M,1}^* \given \mW, \mX, \mD] \big] \\
    &= \E_{OW} \Big[\frac{1}{n} \sum_{i=1, D_i=1}^n W_{i}^2 \Big(1+\frac{K_M^*(i)}{M} - \frac{1}{e(X_i)} \Big)^2 \sigma_{1}^2(X_i) \Big] \\
    &\le C_{\sigma} \E_{OW} \Big[ W_{i} \Big(1+\frac{K_M^*(i)}{M} - \frac{1}{e(X_i)}\Big) \Big]^2 \\
    &= o(1)
\end{align*}
since all cross-terms of $R_{M,1}^{*2}$ have conditional zero means so that we can apply \eqref{eq:BootKMconsistency} combined with Hölder's inequality, and use the fact that $W_{i}$'s have bounded moments. This completes the first part of our proof.
 
{\bf Part II.}  We now prove Theorem~\ref{thm2}\ref{thm2:2}. This is separated to three more parts. 

{\bf Part II.1.} We first show that
\[
\sqrt{n} (\hat \tau_M^{*, {\rm bc}}-\hat\tau_M^{\rm bc}) = \frac{1}{\sqrt{n}} \sum_{i=1}^n (W_i-1) \chi_0(O_i) + o_{\P_{OW}}(1).
\]
We have, under conditions on the accuracy of $\hat \mu_\omega$ and the smoothness condition on $\mu_\omega$ for $\omega \in \{0,1\}$, 
\begin{align*}
    \hat \tau_M^{*, {\rm bc}} = \bar{\tau}(\mX)^* + E_M^* + B_M^* - \hat B_M^*,
\end{align*}
and we can establish 
\begin{align*}
    \sqrt{n} \big(B_M^* - \hat{B}_M^* \big) = o_{\P_{OW}}(1).
\end{align*}
To see this, write
\begin{align*}
  & \big\lvert B_M^* - \hat{B}_M^* \big\rvert \\
   =& \Big\lvert \frac{1}{n}\sum_{i=1}^n (2D_i^*-1) \Big[\frac{1}{M}\sum_{j \in \mathcal{J}_M^*(i)} \Big\{ \mu_{1-D_i^*}(X_i^*)-\mu_{1-D_i^*}(X_{j}^*) - \hat{\mu}_{1-D_i^*}(X_i^*) + \hat{\mu}_{1-D_i^*}(X_{j}^*)\Big\} \Big] \Big\rvert\\
   \le& \max_{i \in \zahl{n}} \frac{1}{M}\sum_{j \in \mathcal{J}_M^*(i)} \Big\lvert \mu_{1-D_i^*}(X_i^*)-\mu_{1-D_i^*}(X_{j}^*) - \hat{\mu}_{1-D_i^*}(X_i^*) + \hat{\mu}_{1-D_i^*}(X_{j}^*) \Big\rvert\\
   \le& \max_{i \in \zahl{n}, j \in \mathcal{J}_M^*(i), \omega \in \{0,1\}} \Big\lvert \mu_\omega(X_i^*)-\mu_\omega(X_{j}^*) - \hat{\mu}_\omega(X_i^*) + \hat{\mu}_\omega(X_{j}^*) \Big\rvert. \yestag \label{eq:matching,BTconsistency,DiffBM}
\end{align*}
Let $k$ equal to $\floor{d/2}{}+1$ and denote $X_j^* - X_i^*$ by $U_{j,i}^*$ for all $i,j \in \zahl{n}$. Employing Taylor expansion of $\mu_\omega(\cdot)$ around $X_i^*$ to the $k$-th order for $\omega \in \{0,1\}$, one obtains
\begin{align*}
    \Big\lvert \mu_\omega (X_{j}^*) - \mu_{\omega} (X_i^*) - \sum_{\ell = 1}^{k-1} \frac{1}{\ell!} \sum_{t \in \Lambda_\ell} \partial^t \mu_{\omega} (X_i^*) U^{*t}_{j,i} \Big\rvert \le \max_{t \in \Lambda_k} \lVert \partial^t \mu_{\omega} \rVert_\infty \frac{1}{k!} \sum_{t \in \Lambda_k} \lVert U_{j,i}^* \rVert^k, \yestag \label{eq:matching,BTconsistency,TaylorMu}
\end{align*}
and similarly
\begin{align*}
  \Big\lvert \hat{\mu}_\omega (X_{j}^*) - \hat{\mu}_{\omega} (X_i^*) - \sum_{\ell = 1}^{k-1} \frac{1}{\ell!} \sum_{t \in \Lambda_\ell} \partial^t \hat{\mu}_{\omega} (X_i^*) U^{*t}_{j,i} \Big\rvert \le \max_{t \in \Lambda_k} \lVert \partial^t \hat{\mu}_{\omega} \rVert_\infty \frac{1}{k!} \sum_{t \in \Lambda_k} \lVert U_{j,i}^* \rVert^k, \yestag
  \label{eq:matching,BTconsistency,TaylorMuhat}
\end{align*}
with the derivative terms satisfying
\begin{align*}
  \Big\lvert \sum_{\ell = 1}^{k-1} \frac{1}{\ell!} \sum_{t \in \Lambda_\ell} (\partial^t \hat{\mu}_{\omega} (X_i^*) - \partial^t \mu_{\omega} (X_i^*))  U^{*t}_{j,i} \Big\rvert \le \sum_{\ell = 1}^{k-1} \max_{t \in \Lambda_\ell} \lVert \partial^t \hat{\mu}_{\omega} - \partial^t \mu_{\omega} \rVert_\infty \frac{1}{\ell!} \sum_{t \in \Lambda_\ell} \lVert U_{j,i}^* \rVert^\ell. \yestag \label{eq:matching,BTconsistency,DiffTaylor}
\end{align*}
Combining \eqref{eq:matching,BTconsistency,DiffBM}-\eqref{eq:matching,BTconsistency,DiffTaylor}, from Assumption \ref{asp:matching2} one obtains
\begin{align*}
    \big\lvert B_M^* - \hat{B}_M^* \big\rvert & \le \Big( \max_{\omega \in \{0,1\}} \max_{ t \in \Lambda_k} \lVert \partial^t \mu_{\omega} \rVert_\infty + \max_{\omega \in \{0,1\}} \max_{t \in \Lambda_k} \lVert \partial^t \hat{\mu}_{\omega} \rVert_\infty \Big)  \max_{i \in \zahl{n}} \lVert U_{M,i}^* \rVert^k \\
    & ~~ + \sum_{\ell = 1}^{k-1} \Big( \max_{\omega \in \{0,1\}} \max_{t \in \Lambda_\ell} \lVert \partial^t \hat{\mu}_{\omega} - \partial^t \mu_{\omega} \rVert_\infty \Big) \max_{i \in \zahl{n}} \lVert U_{M,i}^* \rVert^\ell \\
    & \lesssim \big(1+O_{\P_{O}}(1) \big) \max_{i \in \zahl{n}} \lVert U_{M,i}^* \rVert^k + \max_{\ell \in \zahl{k-1}} O_{\P_{O}}(n^{-\gamma_\ell}) \max_{i \in \zahl{n}} \lVert U_{M,i}^* \rVert^\ell.
\end{align*}
Using Lemma \ref{lemma:mbc,BTdist} and Markov inequality, for any $\ell \in \zahl{k}$, one has 
\[
\max_{i \in \zahl{n}} \lVert U_{M,i}^* \rVert^\ell = O_{\P_{OW}} ( (M/n)^{\ell/d}).
\]
Then, by the order of $M$, we obtain
\begin{align*}
    \big\lvert B_M^* - \hat{B}_M^* \big\rvert &\lesssim O_{\P_{OW}} \Big(\Big(\frac{M}{n}\Big)^{\frac{k}{d}}\Big) + \max_{\ell \in \zahl{k-1}} O_{\P_{OW}}\Big( n^{-\gamma_\ell} \Big(\frac{M}{n}\Big)^{\frac{\ell}{d}}\Big) \\
    &= o_{\P_{OW}}\big( n^{(\gamma-1)\frac{k}{d}}\big) + \max_{\ell \in \zahl{k-1}} o_{\P_{OW}}\big( n^{(\gamma-1)\frac{\ell}{d}-\gamma_{\ell}}\big) \\
    &= o_{\P_{OW}}\big( n^{-\frac{1}{2}} \big)
\end{align*}
as desired.

{\bf Part II.2.} We then move on to the weak convergence part. 
Writing 
\[
\hat \sigma^2 := \frac{1}{n}\sum_{i=1}^n (\chi_0(O_i) - \bar \chi_0)^2,~~\text{with }\bar\chi_0:=\frac1n\sum_{i=1}^n\chi_0(O_i), 
\]
it is enough to show
\begin{align*}
    \sup_{t \in \bR} \Big\lvert \P_{\mW|\mO}\Big( \frac{1}{\sqrt{n}} \sum_{i=1}^n (W_i-1) \chi_0(O_i) \le t \Big) - \Phi\Big(\frac{t}{\hat \sigma}\Big) \Big\rvert + 
    \sup_{t \in \bR} \Big\lvert \Phi\Big(\frac{t}{\hat \sigma}\Big) - \Phi\Big(\frac{t}{\sigma}\Big) \Big\rvert = o(1) \yestag \label{eq:BTconsistency,a.s.}
\end{align*}
for almost all $\{O_i\}_{i=1}^n$. For the second term in \eqref{eq:BTconsistency,a.s.} to be $o(1)$ almost surely, it follows from $\hat \sigma$ converges to $\sigma$ almost surely (i.e., $\hat\sigma\stackrel{\sf a.s.}{\to}\sigma$) by the strong law of large numbers and continuous mapping theorem, along with the fact that $\Phi(\cdot)$, the cumulative distribution function for the standard normal distribution, is uniformly continuous. 

For the first term in \eqref{eq:BTconsistency,a.s.}, we leverage the Berry-Essen bound \citep[Theorem 3.4.9]{durrett2019probability} conditioning on the original data to obtain
\begin{align*}
    \sup_{t \in \bR} \Big\lvert \P_{\mW|\mO}\Big( \frac{1}{\sqrt{n}} \sum_{i=1}^n (W_i-1) \chi_0(O_i) \le t \Big) - \Phi\Big(\frac{t}{\hat \sigma}\Big) \Big\rvert \le \frac{4}{5\sqrt{n}} \frac{\E_{\mW |\mO}\big[\lvert \chi_0(O_i^*) - \bar \chi_0 \rvert^3 \big] }{ \Var_{\mW |\mO}[\chi_0(O_i^*)]^{3/2} }.
\end{align*}
Furthermore we can bound
\begin{align*}
    \frac{4}{5\sqrt{n}} \frac{\E_{\mW |\mO}\big[\lvert \chi_0(O_i^*) - \bar \chi_0  \rvert^3 \big] }{ \Var_{\mW |\mO}[\chi_0(O_i^*)]^{3/2} } &= \frac{4}{5\sqrt{n}}
    \frac{ n^{-1} \sum_{i=1}^n \lvert \chi_0(O_i) - \bar \chi_0  \rvert^3 }{ \hat \sigma^3 } \\
    & \le \frac{16}{5n^{3/2} \hat \sigma^3 } \Big[ \sum_{i=1}^n \big\lvert \chi_0(O_i) - \E[\chi_0(O_i)] \big\rvert^3 + n \big\lvert \E[\chi_0(O_i)] - \bar \chi_0 \big\rvert^3 \Big].
\end{align*}
Again, by the strong law of large numbers, one has $\bar \chi_0 \stackrel{\sf a.s.}{\longrightarrow} \E[\chi_0(O_i)]$ and $\sigma>0$. The second term above then satisfies 
\[
\big\lvert \E[\chi_0(O_i)] - \bar \chi_0 \big\rvert^3 / \sqrt{n} \hat \sigma^3 \stackrel{\sf a.s.}{\longrightarrow} 0. 
\]
Moreover, Lemma \ref{lemma:MZ-SLLN} implies the first term here also converges to zero almost surely, as we can take $p=2/3$ and note $\E[ \chi_0(O_i)^2] < \infty$. This establishes \eqref{eq:BTconsistency,a.s.}.

{\bf Part II.3.} Lastly, leveraging Lemma \ref{lemma:StoOrders}, we have both $o_{\P_O}(1)$ and $o_{\P_{OW}}(1)$ above are of order $o_{\P_W}(1)$ in $\P_O$-probability. Accordingly, Theorem \ref{thm2}\ref{thm2:2} follows  from Theorem \ref{thm:lin} and Polya theorem \citep[Theorem 2.6.1]{lehmann1999elements}. 
\end{proof}

\subsection{Proof of Corollary \ref{coro1}}

\begin{proof}[Proof of Corollary \ref{coro1}]
    It suffices to show that, when $d=1$,
    \begin{align*}
        n^{1/2} \lvert B_M \rvert = O_{\P_O}\big(M/n^{1/2}\big) ~~\text{ and }~~ n^{1/2} \lvert B_M^* \rvert = O_{\P_{OW}}\big(M/n^{1/2}\big). \yestag \label{eq:BiasOrders}
    \end{align*}
    The first part in \eqref{eq:BiasOrders} corresponds to Corollary 4.1 in \cite{lin2023estimation} and we include a proof here for facilitating the bootstrap part argument. 
    
    For any $d$, using Hölder's inequality implies 
    \begin{align*}
        \Big(\frac{n}{M}\Big)^{2/d} \E[ B_M ]^2 &\le \Big(\frac{n}{M}\Big)^{2/d} \max_{i \in \zahl{n}, j \in \cJ_M(i), \omega \in \{0,1\}}\E \big[ \lvert \mu_{\omega}(X_i)-\mu_{\omega}(X_{j}) \rvert^2 \big]  \\
        & \le \Big(\frac{n}{M}\Big)^{2/d} C_1 \max_{i \in \zahl{n}, \omega \in \{0,1\}}\E \big[ \lVert U_{M,i} \rVert^2 \big], \yestag \label{eq:BM,univ,bd}
    \end{align*}
    where $C_1$ is some constant given by the Lipschitz condition on $\mu_0$ and $\mu_1$, and $U_{M,i}:= \cX_{(M)}(X_i)-X_i$. Since by Lemma C.2 in \cite{lin2023estimation} we know that \eqref{eq:BM,univ,bd} is uniformly bounded over $i$, it then holds true that 
    \[
    B_M = O_{\P_O}((M/n)^{1/d}).
    \] 
    For the second part of \eqref{eq:BiasOrders}, we similarly write 
    \begin{align*}
        \Big(\frac{n}{M}\Big)^{2/d} \E[ B_M^* ]^2 &\le \Big(\frac{n}{M}\Big)^{2/d} \max_{i \in \zahl{n}, j \in \cJ_M^*(i), \omega \in \{0,1\}}\E \big[ \lvert \mu_{\omega}(X_i^*)-\mu_{\omega}(X_{j}^*) \rvert^2 \big]  \\
        & \le \Big(\frac{n}{M}\Big)^{2/d} C_1 \max_{i \in \zahl{n}, \omega \in \{0,1\}}\E \big[ \lVert U_{M,i}^* \rVert^2 \big]. \yestag \label{eq:BM*,univ,bd}
    \end{align*}
    Leveraging Lemma \ref{lemma:mbc,BTdist}, we can also bound \eqref{eq:BM*,univ,bd} and then $B_M^* = O_{\P_{OW}}((M/n)^{1/d})$. This completes our proof in combination with Lemma \ref{lemma:StoOrders}.
\end{proof}

\appendix

\section{The rest proofs}

\subsection{Proofs of results in Section \ref{sec:dr}}

This section gives proofs for all results in Section \ref{sec:dr}.

\paragraph*{Additional notations.} We use $\mX$ and $\mZ$ to represent the matrices composed by  $(X_i)_{i=1}^{N_0}$ and $(Z_i)_{i=1}^{N_1}$, respectively. Let $U[0,1]$ denote the uniform distribution over $[0,1]$. Let $U \sim U[0,1]$ and, for all $k \in \zahl{N_0}$, let $U_{(k)}$ be the $k$-th order statistic of $N_0$ independent random variables sampled from $U[0,1]$, assumed to be mutually independent and all independent of $(\mX,\mZ)$.

It is well known that $U_{(k)}$ follows the beta distribution ${\rm Beta}(k, N_0+1-k)$. Let ${\rm Bin}(\cdot,\cdot)$ denote the binomial distribution and ${\rm Ber}(\cdot)$ denote the Bernoulli distribution. For any positive integer $p$, let $L^p(\bR^d)$ denote the space of all measurable functions $f:\bR^d\to\bR$ such that $\int|f(x)|^p \d x<\infty$. For any $x \in \bR^d$ and measurable function $f:\bR^d \to \bR$, we say $x$ is a Lebesgue point of $f$ if
\[
  \lim_{\delta \to 0^+} \frac{1}{\lambda(B_{x,\delta})} \int_{B_{x,\delta}} \lvert f(x) - f(y) \rvert \d y = 0.
\]

\begin{proof}[Proof of Lemma \ref{lemma:moment,Bootstrap catch}]

Lebesgue differentiation theorem \citep{stein2009real} yields that, for any function $f$ that is integrable on $\bR^d$, it holds true that for $\lambda$-almost all $x$, $x$ is a Lebesgue point of $f$. Therefore, for $\nu_0$-almost all $x$, we have $f_0(x)>0$ and $x$ is a Lebesgue point for both $f_0$ and $f_1$; the last step is due to the absolute continuity of $\nu_0$ and $\nu_1$. 

Similar to \cite{lin2023supp}, it then suffices to consider only those $x \in \bR^d$ such that $f_0(x)>0$ and $x$ is a Lebesgue point of both $f_0$ and $f_1$. {In the following, we only give proofs for the statements concerning $A_M^{*,+}(x)$, as the proof for $A_M^{*,-}(x)$ can be simply obtained by changing some of the "$\leq$"'s to "$<$"'s in the arguments below.} We divide our proof in several parts.

{\bf Part I.} This part proves \eqref{eq:BTDR,local,L1}.

{\bf Case I.1.} Suppose $f_1(x)>0$.

First, we observe that for any input $z$, the bootstrap $M$-th NN $\cX_{(M)}^*(z)$ returns the value of $x \in\{X_i\}_{i=1}^{N_0}$ such that 
\begin{align*}
    \sum_{i=1}^{N_0} W_i \ind\big(\lVert X_i - z \rVert < \lVert x - z \rVert\big) < M ~~\text{ and }~ \sum_{i=1}^{N_0} W_i \ind\big(\lVert X_i - z \rVert \le \lVert x - z \rVert\big) \ge M.
\end{align*}
Conditioning on the original sample $\{X_i\}_{i=1}^{N_0}$ and re-ordering it to be $\{X_i(z)\}_{i=1}^{N_0}$ according to the proximity of each $X_i$ to $z$ in $\bR^d$, we then have for any $k \in \zahl{N_0}$, $\cX_{(M)}^*(z) = \cX_{(k)}(z)$ if and only if the sum of the first $k-1$ nearest $X_i(z)$'s bootstrap weights is strictly smaller than $M$ and the sum of the first $k$ nearest $X_i(z)$'s bootstrap weights exceeds $M$. That is,
\begin{align*}
    S_{k-1}(z) := \sum_{i=1}^{k-1} W_i(z) < M ~~\text{ and }~ S_{k}(z) := \sum_{i=1}^{k} W_i(z) \ge M.
\end{align*}
Denote the probability of this event by $p_k$. From the exchangeability of the Efron's bootstrap weights, we have 
\[
S_{k-1}(z) \sim {\rm Bin}(N_0, (k-1)/N_0)~~ \text{and}~~ S_{k}(z) \sim {\rm Bin}(N_0, k/N_0), 
\]
and therefore
\begin{align*}
    p_k &= \P \big(S_{k-1}(z) < M, S_{k}(z) \ge M \big) \\
    &= \P \big(S_{k}(z) \ge M \big) - \P \big(S_{k-1}(z) \ge M \big) \\ 
    &= \bigg[ 1 - \sum_{j=0}^{M-1} \binom{N_0}{j} \Big(\frac{k}{N_0}\Big)^j \Big(1-\frac{k}{N_0}\Big)^{N_0-j} \bigg] - \bigg[ 1 - \sum_{j=0}^{M-1} \binom{N_0}{j} \Big(\frac{k-1}{N_0}\Big)^j \Big(1-\frac{k-1}{N_0}\Big)^{N_0-j} \bigg] \\
    &= \sum_{j=0}^{M-1} \binom{N_0}{j} \bigg[ \Big(\frac{k-1}{N_0}\Big)^j \Big(1-\frac{k-1}{N_0}\Big)^{N_0-j} - \Big(\frac{k}{N_0}\Big)^j \Big(1-\frac{k}{N_0}\Big)^{N_0-j} \bigg].
\end{align*}

Now we have 
\begin{align*}
    \E_{OW} [\nu_1(A_M^{*,+}(x))] &= \P_{OW} \big( Z \in A_M^{*,+}(x) \big) \\
    &= \P_{OW} \big( \lVert x - Z \rVert \le \lVert \cX_{(M)}^*(Z) - Z \rVert \big) \\
    &= \P_{OW} \big( \nu_0(B_{Z,\lVert x - Z \rVert}) \le \nu_0(B_{Z,\lVert \cX_{(M)}^*(Z) - Z \rVert}) \big). \yestag \label{eq:DR,L1}
\end{align*}
Moreover, for any given $z \in \bR^d$, by the probability integral transformation, one has $\{\nu_0(B_{z,\lVert X_i - z \rVert})\}_{i=1}^{N_0}$ are independent and identically distributed (i.i.d.) from the uniform distribution $U[0,1]$ and 
\[
\nu_0(B_{z,\lVert \cX_{(M)}^*(z) - z \rVert}) = \nu_0(B_{z,\lVert \cX_{(k)}(z) - z \rVert}) \overset{d}{=} U_{(k)}
 \]
 has probability $p_k$ for all $k \in \zahl{N_0}$, which implies $\nu_0(B_{Z,\lVert X - Z \rVert})$, $\nu_0(B_{Z,\lVert \cX_{(M)}^*(Z) - Z \rVert})$ have the same distributions conditioning on $Z$ and that $\nu_0(B_{Z,\lVert X - Z \rVert})$, $\nu_0(B_{Z,\lVert \cX_{(M)}^*(Z) - Z \rVert})$ are independent of $Z$. Also, we have $\nu_1(B_{x,\lVert x-Z \rVert}) \sim U[0,1]$ follows from the same reasoning.

Since $x$ is a Lebesgue point of $\nu_0$ and $\nu_1$, from Lemma \ref{lemma:leb,p}, for any given $\epsilon \in (0,1)$, we can find a $\delta_x>0$ such that all $z \in \bR^d$ satisfing $\lVert x-z \rVert \le \delta_x$ enjoys
\begin{align*}
  & \Big\lvert \frac{\nu_0(B_{x,\lVert z-x \rVert})}{\lambda(B_{x,\lVert z-x \rVert})} - f_0(x) \Big\rvert \le \epsilon f_0(x),~~ \Big\lvert \frac{\nu_0(B_{z,\lVert z-x \rVert})}{\lambda(B_{z,\lVert z-x \rVert})} - f_0(x) \Big\rvert \le \epsilon f_0(x),\\
  & \Big\lvert \frac{\nu_1(B_{x,\lVert z-x \rVert})}{\lambda(B_{x,\lVert z-x \rVert})} - f_1(x) \Big\rvert \le \epsilon f_1(x),~~ \Big\lvert \frac{\nu_1(B_{z,\lVert z-x \rVert})}{\lambda(B_{z,\lVert z-x \rVert})} - f_1(x) \Big\rvert \le \epsilon f_1(x);
  \yestag \label{eq:DR,LebPoint_1}
\end{align*}
and therefore
\begin{align*}
    \frac{1-\epsilon}{1+\epsilon} \frac{f_0(x)}{f_1(x)} \le \frac{\nu_0(B_{z,\lVert x-z \rVert})}{\lambda(B_{z,\lVert x-z \rVert})} \frac{\lambda(B_{x,\lVert x-z \rVert})}{\nu_1(B_{x,\lVert x-z \rVert})} = \frac{\nu_0(B_{z,\lVert x-z \rVert})}{\nu_1(B_{x,\lVert x-z \rVert})} \le \frac{1+\epsilon}{1-\epsilon} \frac{f_0(x)}{f_1(x)}. 
\end{align*}
Moreover, if $\lVert x-z \rVert > \delta_x$, by \eqref{eq:DR,LebPoint_1} applied to $z'$, the intersection point of the line connecting $x$ and $z$ and the boundary of $B_{x,\delta_x}$, we then have 
\begin{align*}
    \nu_0(B_{z,\lVert z-x \rVert}) \ge \nu_0(B_{z',\delta_x}) \ge (1-\epsilon)f_0(x) \lambda(B_{z',\delta_x}) = (1-\epsilon)f_0(x) \lambda(B_{0,\delta_x}). \yestag \label{eq:DR,LebPoint_3}
\end{align*}
Since $M\log N_0/N_0 \to 0$, letting $\eta_N = 4\log(N_0/M)$, we can take $N_0$ sufficiently large such that 
\begin{align*}
    \eta_N \frac{M}{N_0} = 4\frac{M}{N_0} \log\Big(\frac{N_0}{M}\Big) < (1-\epsilon)f_0(x) \lambda(B_{0,\delta_x}); \yestag \label{eq:eta_n}
\end{align*}
in this case, any $z \in \bR^d$ satisfying $\nu_0(B_{z,\lVert z-x \rVert}) \le \eta_N M / N_0$ would satisfy $\lVert x-z \rVert \le \delta_x$.

\vspace{0.2cm}

{\bf Upper bound.} Now we derive the upper bound for \eqref{eq:DR,L1}.
\begin{align*}
    & \E_{OW} [\nu_1(A_M^{*,+}(x))] = \P_{OW} \big( \nu_0(B_{Z,\lVert x - Z \rVert}) \le \nu_0(B_{Z,\lVert \cX_{(M)}^*(Z) - Z \rVert}) \big)  \\
    & ~ \le \P_{OW} \Big( \nu_0(B_{Z,\lVert x - Z \rVert}) \le \nu_0(B_{Z,\lVert \cX_{(M)}^*(Z) - Z \rVert}) \le \eta_N \frac{M}{N_0}\Big) + \P_{OW} \Big(\nu_0(B_{Z,\lVert \cX_{(M)}^*(Z) - Z \rVert}) > \eta_N \frac{M}{N_0}\Big).\yestag \label{eq:BTDR,catch1,uprbd}
\end{align*}
For the second term of \eqref{eq:BTDR,catch1,uprbd}, we can choose 
\[
K = \frac{M}{2} \log\Big(\frac{N_0}{M}\Big), 
\]
and write
\begin{align*}
     \P_{OW} \Big(\nu_0(B_{Z,\lVert \cX_{(M)}^*(Z) - Z \rVert}) > \eta_N \frac{M}{N_0}\Big) &= \sum_{k=1}^{N_0} \P \Big(U_{(k)} > \eta_N \frac{M}{N_0}\Big) p_k \\
    & = \sum_{k \le K} \P \Big(U_{(k)} > \eta_N \frac{M}{N_0}\Big) p_k + \sum_{k > K} \P \Big(U_{(k)} > \eta_N \frac{M}{N_0}\Big) p_k. \yestag \label{eq:BTDR,catch1,uprbd_1}
\end{align*}
Notice that for any $K$ such that $K/M \to \infty$ as $N_0 \to \infty$, by Chernoff bound we have
\begin{align*}
    \sum_{k > K} p_k & = \sum_{j=0}^{M-1} \binom{N_0}{j} \Big(\frac{K}{N_0}\Big)^j \Big(1-\frac{K}{N_0}\Big)^{N_0-j}
    \le \P \Big( {\rm Bin}\Big(N_0, \frac{K}{N_0}\Big) \le M \Big) \\
    & \le \exp\Big\{ M \Big[1-\frac{K}{M} + \log\Big(\frac{K}{M}\Big)\Big] \Big\}
    \le \exp\Big\{ -\frac{1}{2}K \Big\} \yestag \label{eq:BTDR,BTweights,uprbd}
\end{align*}
when $N_0$ is sufficiently large. Moreover, since $K \leq M \log(N_0/M)$, by Chernoff bound we have for any $k \leq K$,
\begin{align*}
     \P \Big(U_{(k)} > \eta_N \frac{M}{N_0}\Big) & \le \P \Big(U_{(K)} > \eta_N \frac{M}{N_0}\Big) = \P\Big({\rm Bin}\Big(N_0, \eta_N \frac{M}{N_0}\Big) \le K \Big) \\
     & \le \exp\Big\{ K \Big[1-\frac{\eta_N M}{K} + \log\Big(\frac{\eta_N M}{K} \Big)\Big] \Big\} \le \exp\Big\{ -\frac{1}{2}K \Big\}. \yestag \label{eq:BTDR,catch1,uprbd_2}
\end{align*}
Combining \eqref{eq:BTDR,catch1,uprbd_1}-\eqref{eq:BTDR,catch1,uprbd_2}, we conclude that
\begin{align*}
    & \lim_{N_0\to\infty} \frac{N_0}{M} \P_{OW} \Big(\nu_0(B_{Z,\lVert \cX_{(M)}^*(Z) - Z \rVert}) > \eta_N \frac{M}{N_0}\Big) \le \lim_{N_0\to\infty} \frac{N_0}{M} \Big[\P \Big(U_{(K)} > \eta_N \frac{M}{N_0}\Big) + \sum_{k > K} p_k \Big] \\
    & ~ \le \lim_{N_0\to\infty} \frac{N_0}{M} \exp\Big\{ -\frac{1}{2}K \Big\} = \lim_{N_0\to\infty} \Big(\frac{N_0}{M}\Big)^{1-M/4} = 0.
\end{align*}
For the first term of \eqref{eq:BTDR,catch1,uprbd}, we have
\begin{align*}
    &\P_{OW} \Big( \nu_0(B_{Z,\lVert x - Z \rVert}) \le \nu_0(B_{Z,\lVert \cX_{(M)}^*(Z) - Z \rVert}) \le \eta_N \frac{M}{N_0}\Big) \\
    & ~ =\P_{OW} \Big( \nu_0(B_{Z,\lVert x - Z \rVert}) \le \nu_0(B_{Z,\lVert \cX_{(M)}^*(Z) - Z \rVert}) \le \eta_N \frac{M}{N_0}, \lVert Z - x \rVert \le \delta_x \Big) \\
    & ~ \le \P_{OW} \big( \nu_0(B_{Z,\lVert x - Z \rVert}) \le \nu_0(B_{Z,\lVert \cX_{(M)}^*(Z) - Z \rVert}), \lVert Z - x \rVert \le \delta_x \big) \\
    & ~ \le \P_{OW} \Big( \frac{1-\epsilon}{1+\epsilon} \frac{f_0(x)}{f_1(x)}  \nu_1(B_{x,\lVert x-Z \rVert}) \le \nu_0(B_{Z,\lVert \cX_{(M)}^*(Z) - Z \rVert}), \lVert Z - x \rVert \le \delta_x \Big) \\
    & ~ \le \P_{OW} \Big( \frac{1-\epsilon}{1+\epsilon} \frac{f_0(x)}{f_1(x)}  \nu_1(B_{x,\lVert x-Z \rVert}) \le \nu_0(B_{Z,\lVert \cX_{(M)}^*(Z) - Z \rVert}) \Big) = \P_{OW} \Big( \frac{1-\epsilon}{1+\epsilon} \frac{f_0(x)}{f_1(x)} U \le \nu_0(B_{Z,\lVert \cX_{(M)}^*(Z) - Z \rVert}) \Big) \\
    & ~ = \sum_{k=1}^{N_0} \P \Big( \frac{1-\epsilon}{1+\epsilon} \frac{f_0(x)}{f_1(x)} U \le U_{(k)} \Big) p_k =  \sum_{k=1}^{N_0} \Big[ \int_0^1 \P \Big( U_{(k)} \ge \frac{1-\epsilon}{1+\epsilon} \frac{f_0(x)}{f_1(x)}  t \Big)\d t \Big] p_k.
\end{align*}
Therefore, we can upper bound the first term by
\begin{align*}
    & \frac{N_0}{M} \P_{OW} \Big( \nu_0(B_{Z,\lVert x - Z \rVert}) \le \nu_0(B_{Z,\lVert \cX_{(M)}^*(Z) - Z \rVert}) \le \eta_N \frac{M}{N_0}\Big) \\
    & ~ \le \frac{N_0}{M} \sum_{k=1}^{N_0} \Big[ \int_0^1 \P \Big( U_{(k)} \ge \frac{1-\epsilon}{1+\epsilon} \frac{f_0(x)}{f_1(x)}  t \Big)\d t \Big] p_k = \frac{N_0}{M} \frac{1+\epsilon}{1-\epsilon} \frac{f_1(x)}{f_0(x)} \sum_{k=1}^{N_0} \Big[ \int_0^{\frac{1-\epsilon}{1+\epsilon} \frac{f_0(x)}{f_1(x)}} \P \big( U_{(k)} \ge  t \big)\d t \Big] p_k \\
    & ~ \le \frac{N_0}{M} \frac{1+\epsilon}{1-\epsilon} \frac{f_1(x)}{f_0(x)} \sum_{k=1}^{N_0} \Big[ \int_0^{\infty} \P \big( U_{(k)} \ge  t \big)\d t \Big] p_k = \frac{N_0}{M} \frac{1+\epsilon}{1-\epsilon} \frac{f_1(x)}{f_0(x)} \sum_{k=1}^{N_0} \E\big[ U_{(k)} \big] p_k.
\end{align*}
Now, we observe that the summation term in the product can be written as
\begin{align*}
    \sum_{k=1}^{N_0} \E\big[ U_{(k)} \big] p_k &= \sum_{k=1}^{N_0} \frac{k}{N_0+1} \Big\{ \sum_{j=0}^{M-1} \binom{N_0}{j} \Big[ \Big(\frac{k-1}{N_0}\Big)^j \Big(1-\frac{k-1}{N_0}\Big)^{N_0-j} - \Big(\frac{k}{N_0}\Big)^j \Big(1-\frac{k}{N_0}\Big)^{N_0-j} \Big] \Big\} \\
    & = \sum_{k=1}^{N_0} \frac{1}{N_0+1} \Big\{ \sum_{j=0}^{M-1} \binom{N_0}{j}  \Big(\frac{k-1}{N_0}\Big)^j \Big(1-\frac{k-1}{N_0}\Big)^{N_0-j} \Big\} \\
    & = \frac{1}{N_0+1} \sum_{k=1}^{N_0} \P\Big( {\rm Bin}\Big(N_0, \frac{k-1}{N_0}\Big) < M \Big) = \frac{1}{N_0+1} \sum_{k=1}^{N_0} \P\Big( U_{(M)} \ge \frac{k-1}{N_0} \Big). \yestag \label{eq:matching,DRlocal,boundedSum_1_1}
\end{align*}
Further observing
\begin{align*}
    & \bigg \lvert \frac{1}{N_0} \sum_{k=1}^{N_0} \P\Big( U_{(M)} \ge \frac{k-1}{N_0} \Big) - \int_0^1 \P\big(U_{(M)} \ge t \big) \d t \bigg \rvert \\
    = & \bigg \lvert \sum_{k=1}^{N_0} \int_{(k-1)/N_0}^{k/N_0} \Big\{ \P\Big( U_{(M)} \ge \frac{k-1}{N_0} \Big) -  \P\big(U_{(M)} \ge t \big) \Big\} \d t \bigg \rvert \\
    = & \bigg \lvert \sum_{k=1}^{N_0} \int_{0}^{1/N_0} \Big\{ \P\Big( U_{(M)} \ge \frac{k-1}{N_0} \Big) -  \P\Big( U_{(M)} \ge \frac{k-1}{N_0} + t \Big) \Big\} \d t \bigg \rvert \\
    = &  \bigg \lvert \int_{0}^{1/N_0} \sum_{k=1}^{N_0} \Big\{ \P\Big( U_{(M)} \ge \frac{k-1}{N_0} \Big) -  \P\Big( U_{(M)} \ge \frac{k-1}{N_0} + t \Big) \Big\} \d t \bigg \rvert \\
    \le & \sup_{0 \le t \le 1/N_0} \sum_{k=1}^{N_0} \bigg\lvert \P\Big( U_{(M)} \ge \frac{k-1}{N_0} \Big) -  \P\Big( U_{(M)} \ge \frac{k-1}{N_0} + t \Big) \bigg\rvert \int_{0}^{1/N_0} \d t \\
    = & \sup_{0 \le t \le 1/N_0}  \sum_{k=1}^{N_0} \P\Big( \frac{k-1}{N_0} \le U_{(M)} < \frac{k-1}{N_0} + t \Big) \int_{0}^{1/N_0} \d t \le \frac{1}{N_0},
\end{align*}
we then know that
\begin{align*}
    & \limsup_{N_0\to\infty} \frac{N_0}{M} \P_{OW} \Big( \nu_0(B_{Z,\lVert x - Z \rVert}) \le \nu_0(B_{Z,\lVert \cX_{(M)}^*(Z) - Z \rVert}) \le \eta_N \frac{M}{N_0}\Big) \\
    \le& \limsup_{N_0\to\infty} \frac{N_0}{M} \frac{1+\epsilon}{1-\epsilon} \frac{f_1(x)}{f_0(x)} \Big( \E\big[ U_{(M)} \big] + O\Big(\frac{1}{N_0}\Big)  \Big) = \frac{1+\epsilon}{1-\epsilon} \frac{f_1(x)}{f_0(x)}. \yestag \label{eq:matching,DRlocal,boundedSum_1_2}
\end{align*}
Combining the two terms together, we obtain, for any $\epsilon>0$,
\begin{align*}
     \limsup_{N_0\to\infty} \frac{N_0}{M} \E_{OW} [\nu_1(A_M^{*,+}(x))] \le \frac{1+\epsilon}{1-\epsilon} \frac{f_1(x)}{f_0(x)}. \yestag \label{eq:BTDR,catch1,uprbd1-1}
\end{align*}

\vspace{0.2cm}

{\bf Lower bound.} Now we derive the lower bound for \eqref{eq:DR,L1}. First, we note that, by the choice of $\eta_N$, $\lVert Z - x \rVert \le \delta_x$ would be implied by $\frac{1+\epsilon}{1-\epsilon} \frac{f_0(x)}{f_1(x)}  \nu_1(B_{x,\lVert x-Z \rVert}) \le \eta_N \frac{M}{N_0}$ as when $\lVert Z - x \rVert > \delta_x$, 
\begin{align*}
    \frac{1+\epsilon}{1-\epsilon} \frac{f_0(x)}{f_1(x)}  \nu_1(B_{x,\lVert x-Z \rVert}) &> \frac{1+\epsilon}{1-\epsilon} \frac{f_0(x)}{f_1(x)}  \nu_1(B_{x,\delta_x})  \\ 
    & \ge \frac{1+\epsilon}{1-\epsilon} \frac{f_0(x)}{f_1(x)} (1-\epsilon)f_1(x) \lambda(B_{x,\delta_x}) = (1+\epsilon)f_0(x) \lambda(B_{0,\delta_x}) > \eta_N \frac{M}{N_0}.
\end{align*}
This allows us to write
\begin{align*}
  & \E_{OW} [\nu_1(A_M^{*,+}(x))] = \P_{OW} \big( \nu_0(B_{Z,\lVert x - Z \rVert}) \le \nu_0(B_{Z,\lVert \cX_{(M)}^*(Z) - Z \rVert}) \big) \\
  & ~\ge \P_{OW} \Big( \nu_0(B_{Z,\lVert x - Z \rVert}) \le \nu_0(B_{Z,\lVert \cX_{(M)}^*(Z) - Z \rVert})  \le \eta_N \frac{M}{N_0} \Big) \\
  & ~= \P_{OW} \Big( \nu_0(B_{Z,\lVert x - Z \rVert}) \le \nu_0(B_{Z,\lVert \cX_{(M)}^*(Z) - Z \rVert})  \le \eta_N \frac{M}{N_0}, \lVert Z - x \rVert \le \delta_x \Big)\\
  & ~\ge \P_{OW} \Big( \frac{1+\epsilon}{1-\epsilon} \frac{f_0(x)}{f_1(x)}  \nu_1(B_{x,\lVert x-Z \rVert}) \le \nu_0(B_{Z,\lVert \cX_{(M)}^*(Z) - Z \rVert})  \le \eta_N \frac{M}{N_0}, \lVert Z - x \rVert \le \delta_x \Big)\\
  & ~= \P_{OW} \Big( \frac{1+\epsilon}{1-\epsilon} \frac{f_0(x)}{f_1(x)}  \nu_1(B_{x,\lVert x-Z \rVert}) \le \nu_0(B_{Z,\lVert \cX_{(M)}^*(Z) - Z \rVert}) \le \eta_N \frac{M}{N_0} \Big)\\
  & ~\ge \P_{OW} \Big( \frac{1+\epsilon}{1-\epsilon} \frac{f_0(x)}{f_1(x)}  \nu_1(B_{x,\lVert x-Z \rVert}) \le \nu_0(B_{Z,\lVert \cX_{(M)}^*(Z) - Z \rVert}) \Big) - \P_{OW} \Big( \nu_0(B_{Z,\lVert \cX_{(M)}^*(Z) - Z \rVert}) > \eta_N \frac{M}{N_0} \Big)\\
  & ~= \P_{OW} \Big( \frac{1+\epsilon}{1-\epsilon} \frac{f_0(x)}{f_1(x)} U \le \nu_0(B_{Z,\lVert \cX_{(M)}^*(Z) - Z \rVert})\Big) - \P_{OW} \Big( \nu_0(B_{Z,\lVert \cX_{(M)}^*(Z) - Z \rVert}) > \eta_N \frac{M}{N_0}\Big). \yestag \label{eq:BTDR,catch1,lwrbd}
\end{align*}
For the second term here, from the previous analysis we have shown
\begin{align*}
    & \lim_{N_0\to\infty} \frac{N_0}{M} \P_{OW} \Big(\nu_0(B_{Z,\lVert \cX_{(M)}^*(Z) - Z \rVert}) > \eta_N \frac{M}{N_0}\Big) = 0.
\end{align*}
For the first term in \eqref{eq:BTDR,catch1,lwrbd}, we write it as
\begin{align*}
    & \P_{OW} \Big( \frac{1+\epsilon}{1-\epsilon} \frac{f_0(x)}{f_1(x)} U \le \nu_0(B_{Z,\lVert \cX_{(M)}^*(Z) - Z \rVert})\Big) = \sum_{k=1}^{N_0} \P \Big( \frac{1+\epsilon}{1-\epsilon} \frac{f_0(x)}{f_1(x)} U \le U_{(k)}\Big) p_k \\
    & ~= \sum_{k=1}^{N_0} \Big[\int_0^1 \P \Big( U_{(k)} \ge \frac{1+\epsilon}{1-\epsilon} \frac{f_0(x)}{f_1(x)}  t \Big)\d t \Big] p_k = \frac{1-\epsilon}{1+\epsilon} \frac{f_1(x)}{f_0(x)} \sum_{k=1}^{N_0} \Big[\int_0^{\frac{1+\epsilon}{1-\epsilon} \frac{f_0(x)}{f_1(x)}} \P \big( U_{(k)} \ge t \big)\d t \Big] p_k \\
    & ~= \frac{1-\epsilon}{1+\epsilon} \frac{f_1(x)}{f_0(x)} \sum_{k=1}^{N_0} \Big[\int_0^{1} \P \big( U_{(k)} \ge t \big)\d t + \int_1^{\frac{1+\epsilon}{1-\epsilon} \frac{f_0(x)}{f_1(x)}} \P \big( U_{(k)} \ge t \big)\d t \Big]p_k.
\end{align*}
By an analogous argument to our prior analysis of \eqref{eq:matching,DRlocal,boundedSum_1_1} and \eqref{eq:matching,DRlocal,boundedSum_1_2}, one can show 
\begin{align*}
    & \lim_{N_0\to\infty} \frac{N_0}{M} \frac{1-\epsilon}{1+\epsilon} \frac{f_1(x)}{f_0(x)} \sum_{k=1}^{N_0} \Big[\int_0^{1} \P ( U_{(k)} \ge t)\d t \Big]p_k \\
     =& \lim_{N_0\to\infty} \frac{N_0}{M} \frac{1-\epsilon}{1+\epsilon} \frac{f_1(x)}{f_0(x)} \sum_{k=1}^{N_0} \E \big[U_{(k)} \big] p_k \\
     =& \frac{1-\epsilon}{1+\epsilon} \frac{f_1(x)}{f_0(x)}.
\end{align*}
For the second term, supposing $\frac{1+\epsilon}{1-\epsilon} \frac{f_0(x)}{f_1(x)}<1$ and choosing $K = N_0/\log N_0$, by Chernoff's inequality we have for $N_0$ sufficiently large
\begin{align*}
    & \sum_{k=1}^{N_0} \Big[ \int_{\frac{1+\epsilon}{1-\epsilon} \frac{f_0(x)}{f_1(x)}}^{1} \P ( U_{(k)} \ge t )\d t \Big]p_k \le \Big[1-\frac{1+\epsilon}{1-\epsilon} \frac{f_0(x)}{f_1(x)}\Big] \sum_{k=1}^{N_0} \P \Big( U_{(k)} \ge \frac{1+\epsilon}{1-\epsilon} \frac{f_0(x)}{f_1(x)} \Big) p_k \\
    & ~=
    \Big[1-\frac{1+\epsilon}{1-\epsilon} \frac{f_0(x)}{f_1(x)}\Big] \Big[ \sum_{k \le K} \P \Big( U_{(k)} \ge \frac{1+\epsilon}{1-\epsilon} \frac{f_0(x)}{f_1(x)} \Big) p_k + \sum_{k > K} \P \Big( U_{(k)} \ge \frac{1+\epsilon}{1-\epsilon} \frac{f_0(x)}{f_1(x)} \Big) p_k \Big] \\
    & ~\le \Big[1-\frac{1+\epsilon}{1-\epsilon} \frac{f_0(x)}{f_1(x)}\Big] \Big[ \P \Big( U_{(K)} \ge \frac{1+\epsilon}{1-\epsilon} \frac{f_0(x)}{f_1(x)} \Big) + \sum_{k > K} p_k \Big] \\
    & ~\le \Big[1-\frac{1+\epsilon}{1-\epsilon} \frac{f_0(x)}{f_1(x)}\Big] \Big[ \exp\Big\{ K \Big[ 1- \frac{1+\epsilon}{1-\epsilon} \frac{f_0(x)}{f_1(x)} \frac{N_0}{K} + \log \Big(\frac{1+\epsilon}{1-\epsilon} \frac{f_0(x)}{f_1(x)} \frac{N_0}{K}\Big) \Big] \Big\} + \exp\Big\{ - \frac{1}{2} K\Big\} \Big] \\
    & ~\le \Big[1-\frac{1+\epsilon}{1-\epsilon} \frac{f_0(x)}{f_1(x)}\Big] 2 \exp\Big\{ - \frac{1}{2} K\Big\} \le \Big[1-\frac{1+\epsilon}{1-\epsilon} \frac{f_0(x)}{f_1(x)}\Big] 2 \exp\Big\{ - \frac{1}{2} N_0^{1/2}\Big\},
\end{align*}
which implies
\begin{align*}
    & \lim_{N_0\to\infty} \frac{N_0}{M} \frac{1-\epsilon}{1+\epsilon} \frac{f_1(x)}{f_0(x)} \sum_{k=1}^{N_0} \Big[\int_{\frac{1+\epsilon}{1-\epsilon} \frac{f_0(x)}{f_1(x)}}^1 \P \big( U_{(k)} \ge t \big)\d t \Big]p_k \le \lim_{N_0\to\infty} \frac{N_0}{M} 2\exp\Big\{ - \frac{1}{2} N_0^{1/2}\Big\} = 0.
\end{align*}
Since it is also the case for $\frac{1+\epsilon}{1-\epsilon} \frac{f_0(x)}{f_1(x)} \ge 1$ as $\P \big( U_{(k)} \ge t \big) = 0$ for $t \ge 1$, we can conclude
\begin{align*}
     \liminf_{N_0 \to \infty} \frac{N_0}{M} \E_{OW} [\nu_1(A_M^{*,+}(x))] \ge \frac{1-\epsilon}{1+\epsilon} \frac{f_1(x)}{f_0(x)}
     \yestag \label{eq:BTDR,catch1,lwrbd1-1}
\end{align*}
for any $\epsilon>0$. Combining \eqref{eq:BTDR,catch1,uprbd1-1} and \eqref{eq:BTDR,catch1,lwrbd1-1} we then obtain
\begin{align*}
    \lim_{N_0 \to \infty} \frac{N_0}{M} \E_{OW} [\nu_1(A_M^{*,+}(x))] =  \frac{f_1(x)}{f_0(x)} =r(x).
\end{align*}

\vspace{0.2cm}

{\bf Case I.2.} Suppose now $f_1(x)=0$. 

For any given $\epsilon \in (0,1)$, from Lemma \ref{lemma:leb,p} we can find a $\delta_x>0$ such that all $z \in \bR^d$ that satisfies $\lVert x-z \rVert \le \delta_x$ one has
\begin{align*}
  \Big\lvert \frac{\nu_0(B_{z,\lVert z-x \rVert})}{\lambda(B_{z,\lVert z-x \rVert})} - f_0(x) \Big\rvert  \le \epsilon f_0(x)~~~{\rm and}~~~ \Big\lvert \frac{\nu_1(B_{x,\lVert z-x \rVert})}{\lambda(B_{x,\lVert z-x \rVert})} \Big\rvert \le \epsilon,
\end{align*}
and therefore
\begin{align*}
    \nu_0(B_{z,\lVert x-z \rVert}) \ge (1-\epsilon) f_0(x) \lambda(B_{z,\lVert x-z \rVert}) = (1-\epsilon) f_0(x) \lambda(B_{x,\lVert x-z \rVert}) \ge \frac{1-\epsilon}{\epsilon} f_0(x) \nu_1(B_{x,\lVert x-z \rVert}).
\end{align*}
Notice \eqref{eq:DR,LebPoint_3} only uses the first half of \eqref{eq:DR,LebPoint_1}; taking $\eta_N$ the same as in \eqref{eq:eta_n}, then for all $z \in \bR^d$ satisfying $\nu_0(B_{z,\lVert z-x \rVert}) \le \eta_N M / N_0$ would again implies $\lVert x-z \rVert \le \delta_x$.  Similar argument as in Case I.1 for the upper bound then yields
\begin{align*}
    & \E_{OW} [\nu_1(A_M^{*,+}(x))] = \P_{OW} \big( \nu_0(B_{Z,\lVert x - Z \rVert}) \le \nu_0(B_{Z,\lVert \cX_{(M)}^*(Z) - Z \rVert}) \big)  \\
    & ~ \le \sum_{k=1}^{N_0} \P \Big( \frac{1-\epsilon}{\epsilon} f_0(x) U \le U_{(k)} \Big) p_k + \P_{OW} \Big(\nu_0(B_{Z,\lVert \cX_{(M)}^*(Z) - Z \rVert}) > \eta_N \frac{M}{N_0}\Big);
\end{align*}
and the first term above can be bounded by
\begin{align*}
    & \frac{N_0}{M} \sum_{k=1}^{N_0} \P \Big( \frac{1-\epsilon}{\epsilon} f_0(x) U \le U_{(k)} \Big) p_k = \frac{N_0}{M} \sum_{k=1}^{N_0} \Big[\int_0^1 \P \Big( U_{(k)} \ge \frac{1-\epsilon}{\epsilon} f_0(x) t \Big)\d t \Big] p_k \\
    &=  \frac{N_0}{M} \frac{\epsilon}{1-\epsilon} \frac{1}{f_0(x)} \sum_{k=1}^{N_0} \Big[\int_0^{\frac{1-\epsilon}{\epsilon} f_0(x)} \P \big( U_{(k)} \ge t \big)\d t \Big] p_k \le \frac{N_0}{M} \frac{\epsilon}{1-\epsilon} \frac{1}{f_0(x)} \sum_{k=1}^{N_0} \E \big[U_{(k)} \big] p_k \\
    & = \frac{\epsilon}{1-\epsilon} \frac{1}{f_0(x)} \frac{N_0}{N_0+1} + o(1)
\end{align*}
from \eqref{eq:matching,DRlocal,boundedSum_1_1} and \eqref{eq:matching,DRlocal,boundedSum_1_2}. The arbitrariness of $\epsilon$ then implies
\begin{align*}
    \lim_{N_0 \to \infty} \frac{N_0}{M} \E_{OW} [\nu_1(A_M^{*,+}(x))] = 0 =r(x),
\end{align*}
which completes the proof of \eqref{eq:BTDR,local,L1} for both cases.

\vspace{0.2cm}

{\bf Part II.} This part proves \eqref{eq:BTDR,local,Lp}.

{\bf Case II.1.} Suppose $f_1(x)>0$. 
For any $\epsilon \in (0,1)$ with $\delta_x$ samely defined as in Case I.1, we take $\eta_N = 4p \log(N_0/M)$ and $N_0$ sufficiently large such that
\begin{align*}
    \eta_N \frac{M}{N_0} = 4p \frac{M}{N_0}\log\Big(\frac{N_0}{M}\Big) < (1-\epsilon) f_0(x) \lambda(B_{0,\delta_x}).
\end{align*}
Let $\tZ_1,\ldots,\tZ_p$ be $p$ independent random vectors that are drawn from $\nu_1$ and further independent of the system. Then
\begin{align*}
  & \E_{OW}\big[\nu_1^p(A_M^{*,+}(x))\big] = \P_{OW} \big(\tZ_1,\ldots,\tZ_p \in A_M^{*,+}(x)\big)\\
  & ~= \P_{OW} \big( \lVert x - \tZ_1 \rVert \le \lVert \cX_{(M)}^* (\tZ_1) - \tZ_1 \rVert, \ldots, \lVert x - \tZ_p \rVert \le \lVert \cX_{(M)}^*(\tZ_p) - \tZ_p \rVert \big)\\
  & ~= \P_{OW} \Big( \nu_0(B_{\tZ_1,\lVert x - \tZ_1 \rVert}) \le \nu_0(B_{\tZ_1,\lVert \cX_{(M)}^*(\tZ_1) - \tZ_1 \rVert}), \ldots, \nu_0(B_{\tZ_p,\lVert x - \tZ_p \rVert}) \le \nu_0(B_{\tZ_p,\lVert \cX_{(M)}^*(\tZ_p) - \tZ_p \rVert}) \Big) \\
  & ~\le \P_{OW} \Big( \max_{j \in \zahl{p}} \nu_0(B_{\tZ_j,\lVert x - \tZ_j \rVert}) \le \max_{j \in \zahl{p}} \nu_0(B_{\tZ_j,\lVert \cX_{(M)}^*(\tZ_j) - \tZ_j \rVert}) \Big) \\
  & ~\le \P_{OW} \Big( \max_{j \in \zahl{p}} \nu_0(B_{\tZ_j,\lVert x - \tZ_j \rVert}) \le \max_{j \in \zahl{p}} \nu_0(B_{\tZ_j,\lVert \cX_{(M)}^*(\tZ_j) - \tZ_j \rVert}) \le \eta_N \frac{M}{N_0} \Big) \\
  & ~~~~+ \P_{OW} \Big( \max_{j \in \zahl{p}} \nu_0(B_{\tZ_j,\lVert \cX_{(M)}^*(\tZ_j) - \tZ_j \rVert}) > \eta_N \frac{M}{N_0} \Big). \yestag \label{eq:DR,Lp,decomp1}
\end{align*}
For the second term above, since it is upper bounded by 
\begin{align*}
    \P_{OW} \Big( \max_{j \in \zahl{p}} \nu_0(B_{\tZ_j,\lVert \cX_{(M)}^*(\tZ_j) - \tZ_j \rVert}) > \eta_N \frac{M}{N_0} \Big) \le p \P_{OW} \Big( \nu_0(B_{\tZ_1,\lVert \cX_{(M)}^*(\tZ_1) - \tZ_1 \rVert}) > \eta_N \frac{M}{N_0} \Big),
\end{align*}
and by previous analysis, we obtain
\begin{align*}
    & \lim_{N_0\to\infty} \Big(\frac{N_0}{M}\Big)^p \P_{OW} \Big( \max_{j \in \zahl{p}} \nu_0(B_{\tZ_j,\lVert \cX_{(M)}^*(\tZ_j) - \tZ_j \rVert}) > \eta_N \frac{M}{N_0} \Big) \\
     \le& \lim_{N_0\to\infty} p \Big(\frac{N_0}{M}\Big)^p \P_{OW} \Big(\nu_0(B_{Z_1,\lVert \cX_{(M)}^*(Z_1) - Z_1 \rVert}) > \eta_N \frac{M}{N_0}\Big) \\
    \le& \lim_{N_0\to\infty} p \Big(\frac{N_0}{M}\Big)^p \exp\Big\{ - \frac{M}{4} \log\Big(\frac{M}{N_0}\Big) \Big\} = \lim_{N_0\to\infty} p \Big(\frac{N_0}{M}\Big)^{p-M/4} \\
  =& 0.
\end{align*}
For the first term in \eqref{eq:DR,Lp,decomp1}, we have
\begin{align*}
    & \P_{OW} \Big( \max_{j \in \zahl{p}} \nu_0(B_{\tZ_j,\lVert x - \tZ_j \rVert}) \le \max_{j \in \zahl{p}} \nu_0(B_{\tZ_j,\lVert \cX_{(M)}^*(\tZ_j) - \tZ_j \rVert}) \le \eta_N \frac{M}{N_0} \Big) \\
    = & \P_{OW} \Big( \max_{j \in \zahl{p}} \nu_0(B_{\tZ_j,\lVert x - \tZ_j \rVert}) \le \max_{j \in \zahl{p}} \nu_0(B_{\tZ_j,\lVert \cX_{(M)}^*(\tZ_j) - \tZ_j \rVert}) \le \eta_N \frac{M}{N_0}, \max_{j \in \zahl{p}} \lVert \tZ_j - x \rVert \le \delta_x \Big) \\
    \le& \P_{OW} \Big( \frac{1-\epsilon}{1+\epsilon} \frac{f_0(x)}{f_1(x)} \max_{j \in \zahl{p}} \nu_1(B_{x,\lVert x-\tZ_j \rVert}) \le \max_{j \in \zahl{p}} \nu_0(B_{\tZ_j,\lVert \cX_{(M)}^*(\tZ_j) - \tZ_j \rVert}) \le \eta_N \frac{M}{N_0}, \max_{j \in \zahl{p}} \lVert \tZ_j - x \rVert \le \delta_x \Big) \\
    \le& \P_{OW} \Big( \frac{1-\epsilon}{1+\epsilon} \frac{f_0(x)}{f_1(x)} \max_{j \in \zahl{p}} \nu_1(B_{x,\lVert x-\tZ_j \rVert}) \le \max_{j \in \zahl{p}} \nu_0(B_{\tZ_j,\lVert \cX_{(M)}^*(\tZ_j) - \tZ_j \rVert}) \Big).
\end{align*}
Notice that $\{ \nu_1(B_{x,\lVert x-\tZ_j \rVert}) \}_{j=1}^p$ are i.i.d. from $U[0,1]$ by probability integral transformation and $\max_{j \in \zahl{p}} \nu_1(B_{x,\lVert x-\tZ_j \rVert}) \overset{d}{=} {\rm Beta}(p,1)$, we then have
\begin{align*}
    & \Big(\frac{N_0}{M}\Big)^p \P_{OW} \Big( \max_{j \in \zahl{p}} \nu_0(B_{\tZ_j,\lVert x - \tZ_j \rVert}) \le \max_{j \in \zahl{p}} \nu_0(B_{\tZ_j,\lVert \cX_{(M)}^*(\tZ_j) - \tZ_j \rVert}) \le \eta_N \frac{M}{N_0} \Big) \\
    \le& \Big(\frac{N_0}{M}\Big)^p \int_0^1 \P_{OW} \Big( \frac{1-\epsilon}{1+\epsilon} \frac{f_0(x)}{f_1(x)} t \le \max_{j \in \zahl{p}} \nu_0(B_{\tZ_j,\lVert \cX_{(M)}^*(\tZ_j) - \tZ_j \rVert}) \Big) pt^{p-1} \d t \\
    =& \Big(\frac{N_0}{M}\Big)^p \Big(\frac{1+\epsilon}{1-\epsilon} \frac{f_1(x)}{f_0(x)}\Big)^p \int_0^{\frac{1-\epsilon}{1+\epsilon}\frac{f_0(x)}{f_1(x)}} \P_{OW} \Big( t \le \max_{j \in \zahl{p}} \nu_0(B_{\tZ_j,\lVert \cX_{(M)}^*(\tZ_j) - \tZ_j \rVert}) \Big) pt^{p-1} \d t.
\end{align*}
To prove 
\begin{align*}
    \limsup_{N_0 \to \infty} \Big(\frac{N_0}{M}\Big)^p \int_0^{\frac{1-\epsilon}{1+\epsilon}\frac{f_0(x)}{f_1(x)}} \P_{OW} \Big( t \le \max_{j \in \zahl{p}} \nu_0(B_{\tZ_j,\lVert \cX_{(M)}^*(\tZ_j) - \tZ_j \rVert}) \Big) pt^{p-1} \d t \le 1, \yestag \label{eq:DR,Lp,le1}
\end{align*}
we first observe that for any fixed $\xi>0$ such that $K=(1+\xi)M$, one has
\begin{align*}
    \sum_{k > K} p_k &\le \P\Big({\rm Bin}\Big(N_0, \frac{K}{N_0}\Big) < M\Big) = \P\Big( U_{(M)} > \frac{K}{N_0}\Big) \le \exp\bigg\{\frac{- \xi M}{\frac{3}{\xi}\big[(\frac{N_0}{N_0+1})(1-\frac{M}{N_0+1}) \big] + 3}\bigg\}
\end{align*}
from applying Lemma 3.1.1 in \cite{reiss2012approximate}. Consequently, for any positive integer $p$, we have
\begin{align*}
     \limsup_{N_0 \to \infty} \Big(\frac{N_0}{M}\Big)^p \sum_{k>(1+\xi)M}  p_k = 0 \yestag \label{eq:matching,DRlocal,upperPbound}
\end{align*}
by the assumption of $\log (N_0/M)/M \to 0$. Similarly, for any $\xi>0$ such that $K=(1-\xi)M$,
\begin{align*}
    \sum_{k < K} p_k &\le \P\Big({\rm Bin}\Big(N_0, \frac{K}{N_0}\Big) \ge M\Big) = \P\Big( U_{(M)} \le \frac{K}{N_0}\Big) \le \exp\bigg\{\frac{- \xi M}{\frac{3}{\xi}\big[(\frac{N_0}{N_0+1})(1-\frac{M}{N_0+1}) \big] + 3}\bigg\};
\end{align*}
and thus for any positive integer $p$,
\begin{align*}
    \limsup_{N_0 \to \infty} \Big(\frac{N_0}{M}\Big)^p \sum_{k<(1-\xi)M}  p_k = 0. \yestag \label{eq:matching,DRlocal,lowerPbound}
\end{align*}
Combining \eqref{eq:matching,DRlocal,upperPbound} and \eqref{eq:matching,DRlocal,lowerPbound}, we then have for any fixed $\xi>0$,
\begin{align*}
     o(1) + (1-\xi)^p \le \Big(\frac{N_0}{M}\Big)^p \sum_{k=1}^{N_0}\Big(\frac{k}{N_0}\Big)^p p_k  \le (1+\xi)^p + o(1);
\end{align*}
therefore the arbitrariness of $\xi$ yields
\begin{align*}
    \lim_{N_0 \to \infty} \Big(\frac{N_0}{M}\Big)^p \sum_{k=1}^{N_0} \Big(\frac{k}{N_0}\Big)^p p_k = 1. \yestag \label{eq:matching,DRlocal,boundedSum}
\end{align*}
Returning to the previous analysis of \eqref{eq:DR,Lp,le1}, we have for any fixed $\xi>0$,
\begin{align*}
    & \Big(\frac{N_0}{M}\Big)^p \int_0^{\frac{1-\epsilon}{1+\epsilon}\frac{f_0(x)}{f_1(x)}} \P_{OW} \Big( t \le \max_{j \in \zahl{p}} \nu_0(B_{\tZ_j,\lVert \cX_{(M)}^*(\tZ_j) - \tZ_j \rVert}) \Big) pt^{p-1} \d t \\
     =& \Big(\frac{N_0}{M}\Big)^p \int_0^{(1+\xi)\frac{M}{N_0}} \P_{OW} \Big( t \le \max_{j \in \zahl{p}} \nu_0(B_{\tZ_j,\lVert \cX_{(M)}^*(\tZ_j) - \tZ_j \rVert}) \Big) pt^{p-1} \d t \\
    & + \Big(\frac{N_0}{M}\Big)^p \int_{(1+\xi)\frac{M}{N_0}}^{\frac{1-\epsilon}{1+\epsilon}\frac{f_0(x)}{f_1(x)}} \P_{OW} \Big( t \le \max_{j \in \zahl{p}} \nu_0(B_{\tZ_j,\lVert \cX_{(M)}^*(\tZ_j) - \tZ_j \rVert}) \Big) pt^{p-1} \d t \\
     \le& \Big(\frac{N_0}{M}\Big)^p \int_0^{(1+\xi)\frac{M}{N_0}}  pt^{p-1} \d t
    + p \Big(\frac{N_0}{M}\Big)^p \int_{(1+\xi)\frac{M}{N_0}}^{\frac{1-\epsilon}{1+\epsilon}\frac{f_0(x)}{f_1(x)}} \P_{OW} \Big( t \le \nu_0(B_{\tZ_1,\lVert \cX_{(M)}^*(\tZ_1) - \tZ_1 \rVert}) \Big) pt^{p-1} \d t \\
    =& (1+\xi)^p + p \Big(\frac{N_0}{M}\Big)^p \sum_{k=1}^{N_0} \Big[ \int_{(1+\xi)\frac{M}{N_0}}^{\frac{1-\epsilon}{1+\epsilon}\frac{f_0(x)}{f_1(x)}} \P ( t \le U_{(k)}) pt^{p-1} \d t \Big] p_k, \yestag \label{eq:DR,Lp,decom2}
\end{align*}
and we can write
\begin{align*}
    & p \Big(\frac{N_0}{M}\Big)^p \sum_{k=1}^{N_0} \Big[ \int_{(1+\xi) \frac{M}{N_0}}^{\frac{1-\epsilon}{1+\epsilon}\frac{f_0(x)}{f_1(x)}} \P ( t \le U_{(k)} ) pt^{p-1} \d t \Big] p_k \\
     \le & p \Big(\frac{N_0}{M}\Big)^p \sum_{k\le (1+\xi)M} \Big[ \int_{(1+\xi) \frac{M}{N_0}}^{\frac{1-\epsilon}{1+\epsilon}\frac{f_0(x)}{f_1(x)}} \P ( t \le U_{(k)} ) pt^{p-1} \d t \Big] p_k + p \Big(\frac{N_0}{M}\Big)^p \sum_{k > (1+\xi)M} \Big[ \int_{(1+\xi) \frac{M}{N_0}}^{\frac{1-\epsilon}{1+\epsilon}\frac{f_0(x)}{f_1(x)}} pt^{p-1} \d t \Big] p_k \\
     \le & p \Big(\frac{N_0}{M}\Big)^p \sum_{k\le (1+\xi)M} \Big[ \int_{\frac{k}{N_0}}^{\frac{1-\epsilon}{1+\epsilon}\frac{f_0(x)}{f_1(x)}} \P ( t \le U_{(k)} ) pt^{p-1} \d t \Big] p_k + p \Big( \frac{1-\epsilon}{1+\epsilon}\frac{f_0(x)}{f_1(x)} \Big)^p \Big(\frac{N_0}{M}\Big)^p \sum_{k>(1+\xi)M} p_k \\
     \le &  p \Big(\frac{N_0}{M}\Big)^p \sum_{k=1}^{N_0} \Big[ \int_{\frac{k}{N_0}}^{\frac{1-\epsilon}{1+\epsilon}\frac{f_0(x)}{f_1(x)}} \P ( t \le U_{(k)} ) pt^{p-1} \d t \Big] p_k + o(1)
\end{align*}
from applying \eqref{eq:matching,DRlocal,upperPbound}. For the first term above, by Chernoff bound applied to the lower tail probability, it has upper bound
\begin{align*}
    & p \Big(\frac{N_0}{M}\Big)^p \sum_{k=1}^{N_0} \Big[ \int_{\frac{k}{N_0}}^{\frac{1-\epsilon}{1+\epsilon}\frac{f_0(x)}{f_1(x)}} \P ( {\rm Bin}(N_0, t ) \le k ) pt^{p-1} \d t \Big] p_k \\
    =& p \Big(\frac{N_0}{M}\Big)^p \sum_{k=1}^{N_0} \Big[ \Big(\frac{k}{N_0}\Big)^{p} \int_{0}^{\frac{1-\epsilon}{1+\epsilon}\frac{f_0(x)}{f_1(x)} \frac{N_0}{k} - 1} \P \Big( {\rm Bin}\Big(N_0, \frac{k}{N_0}(1+t) \Big) \le k \Big) p(1+t)^{p-1} \d t \Big] p_k \\
    \le& p^2 \Big(\frac{N_0}{M}\Big)^p \sum_{k=1}^{N_0} \Big[ \Big(\frac{k}{N_0}\Big)^{p} \int_{0}^{\infty} \exp \big\{k[ 1 - (1+t) + \log(1+t)]\big\} (1+t)^{p-1} \d t \Big] p_k \\
    =& p^2 \Big(\frac{N_0}{M}\Big)^p \sum_{k=1}^{N_0} \Big[ \Big(\frac{k}{N_0}\Big)^{p} \int_{0}^{\infty} \exp \{-tk \} (1+t)^{k+p-1} \d t \Big] p_k \\
    =& p^2 \Big(\frac{N_0}{M}\Big)^p \sum_{k=1}^{N_0} \Big[ \Big(\frac{k}{N_0}\Big)^{p} \exp \{k\} \int_{1}^{\infty} \exp \{-tk \} t^{k+p-1} \d t \Big] p_k \\
    \le& p^2 \Big(\frac{N_0}{M}\Big)^p \sum_{k=1}^{N_0} \Big[ \Big(\frac{k}{N_0}\Big)^{p} \frac{\exp \{k \}}{k^{k+p}} \int_{0}^{\infty} \exp \{-t \} t^{k+p-1} \d t \Big] p_k \\
    =& p^2 \Big(\frac{N_0}{M}\Big)^p \Big\{ \sum_{k \le K} \Big[ \Big(\frac{k}{N_0}\Big)^{p} \frac{\exp \{k \}}{k^{k+p}} \Gamma(k+p) \Big] p_k + \sum_{k > K} \Big[ \Big(\frac{k}{N_0}\Big)^{p} \frac{\exp \{k \}}{k^{k+p}} \Gamma(k+p) \Big] p_k \Big\}.
\end{align*}
For any $k \to \infty$, by Stirling's approximation we get the sequence
\begin{align*}
    \frac{\exp \{k \}}{k^{k+p}} \Gamma(k+p) &= \frac{\exp \{k \}}{k^{k+p}} \Gamma(k+1) (k+1)\cdots(k+p-1) \\
    &\le \sqrt{2\pi k} \frac{(k+1)\cdots(k+p-1)}{k^p} \Big[ 1+\Big(\exp\Big\{\frac{1}{12k}\Big\}-1\Big) \Big] \\
    &\lesssim \sqrt{\frac{2\pi}{k}} \Big[ 1+\Big(\exp\Big\{\frac{1}{12k}\Big\}-1\Big) \Big] = o(1).
\end{align*}
Denoting the upper bound of this sequence by $B$ and choosing $K = M^{1/2}$, we can further upper bound the above term by
\begin{align*}
    & p^2 \Big(\frac{N_0}{M}\Big)^p \bigg\{ \sum_{k \le K} \Big[ \Big(\frac{k}{N_0}\Big)^{p} \frac{\exp \{k \}}{k^{k+p}} \Gamma(k+p) \Big] p_k + \sum_{k > K} \Big[ \Big(\frac{k}{N_0}\Big)^{p} \frac{\exp \{k \}}{k^{k+p}} \Gamma(k+p) \Big] p_k \bigg\} \\
     \le&
    \frac{p^2 B}{M^{p/2}} \sum_{k \le K} p_k +  \frac{\sqrt{2\pi} p^2}{M^{1/4}} \exp\Big\{\frac{1}{12M^{1/2}}\Big\} \Big(\frac{N_0}{M}\Big)^p \sum_{k = 1}^{N_0} \Big[ \Big(\frac{k}{N_0}\Big)^{p} p_k \Big] = o(1).
\end{align*}
Here, the last equality is due to  \eqref{eq:matching,DRlocal,boundedSum}. We then conclude that the second term in \eqref{eq:DR,Lp,decom2} goes to 0 and 
\begin{align*}
    \limsup_{N_0 \to \infty} \Big(\frac{N_0}{M}\Big)^p \E_{OW}[\nu_1^p(A_M^{*,+}(x))] \le (1+\xi)^p \Big(\frac{1+\epsilon}{1-\epsilon} \frac{f_1(x)}{f_0(x)}\Big)^p
\end{align*}
for any $\epsilon \in (0,1)$ and $\xi>0$. On the other hand, for the lower bound we can directly apply Lyapunov inequality to obtain
\begin{align*}
    \liminf_{N_0 \to \infty} \Big(\frac{N_0}{M}\Big)^p \E_{OW}[\nu_1^p(A_M^{*,+}(x))] \ge \liminf_{N_0 \to \infty} \Big(\frac{N_0}{M}\Big) \big[ \E_{OW}[\nu_1 (A_M^{*,+}(x))] \big]^p = \big[ r(x) \big]^p
\end{align*}
from \eqref{eq:BTDR,local,L1}. Therefore, by the arbitrariness of $\epsilon$ and $\xi$, we have
\begin{align*}
    \lim_{N_0\to\infty} \Big(\frac{N_0}{M}\Big)^p \E_{OW}\big[\nu_1^p\big(A_M^{*,+}(x)\big)\big] = \big[r(x)\big]^p.
\end{align*}

\vspace{0.2cm}

{\bf Case II.2.} Suppose $f_1(x)=0$. For any given $\epsilon \in (0,1)$, we take $\delta_x$ same as Case I.2 and $\eta_N$ same as Case II.1. Proceeding similarly as before, one obtains
\begin{align*}
  & \Big(\frac{N_0}{M}\Big)^p \E_{OW}\big[\nu_1^p(A_M^{*,+}(x))\big] \le \Big(\frac{N_0}{M}\Big)^p \P_{OW} \Big( \max_{j \in \zahl{p}} \nu_0(B_{\tZ_j,\lVert x - \tZ_j \rVert}) \le \max_{j \in \zahl{p}} \nu_0(B_{\tZ_j,\lVert \cX_{(M)}^*(\tZ_j) - \tZ_j \rVert}) \Big) \\
  & ~\le \Big(\frac{N_0}{M}\Big)^p \P_{OW} \Big( \max_{j \in \zahl{p}} \nu_0(B_{\tZ_j,\lVert x - \tZ_j \rVert}) \le \max_{j \in \zahl{p}} \nu_0(B_{\tZ_j,\lVert \cX_{(M)}^*(\tZ_j) - \tZ_j \rVert}) \le \eta_N \frac{M}{N_0} \Big) \\
  & ~~~+ \Big(\frac{N_0}{M}\Big)^p \P_{OW} \Big( \max_{j \in \zahl{p}} \nu_0(B_{\tZ_j,\lVert \cX_{(M)}^*(\tZ_j) - \tZ_j \rVert}) > \eta_N \frac{M}{N_0} \Big) \\
  & ~\le \Big(\frac{N_0}{M}\Big)^p \Big(\frac{\epsilon}{1-\epsilon} \frac{1}{f_0(x)}\Big)^p \int_0^{\frac{\epsilon}{1-\epsilon} \frac{1}{f_0(x)}} \P_{OW} \Big( t \le \max_{j \in \zahl{p}} \nu_0(B_{\tZ_j,\lVert \cX_{(M)}^*(\tZ_j) - \tZ_j \rVert}) \Big) pt^{p-1} \d t + o(1);
\end{align*}
and for any fixed $\xi>0$,
\begin{align*}
    \limsup_{N_0 \to \infty}\Big(\frac{N_0}{M}\Big)^p \E_{OW}\big[\nu_1^p(A_M^{*,+}(x))\big] \le (1+\xi)^p \Big(\frac{\epsilon}{1-\epsilon} \frac{1}{f_0(x)}\Big)^p.
\end{align*}
Lastly, by the arbitrariness of $\epsilon$ and $\xi$, we then have 
\begin{align*}
    \lim_{N_0\to\infty} \Big(\frac{N_0}{M}\Big)^p \E_{OW}\big[\nu_1^p\big(A_M^{*,+}(x)\big)\big] = \big[r(x)\big]^p=0.
\end{align*}
Together with Case II.1, this completes the proof for \eqref{eq:BTDR,local,Lp}.
\end{proof}

\begin{proof}[Proof of Theorem \ref{thm:cons,BTlp}]

Same as before, we only give proofs for the statements concerning $\hat{r}_M^{*,+}(x)$, and the proofs for $\hat{r}_M^{*,-}(x)$ can be obtained similarly. 

{\bf Part I.} In this part we prove \eqref{eq:DRest,L1}.
\begin{align*}
    \lim_{N_0,N_1\to\infty} \E_{OW} \big[\hat{r}_M^{*,+}(x)\big] &= \lim_{N_0,N_1\to\infty} \E_{OW} \Big[ \frac{N_0}{N_1} \frac{K_M^{*,+}(x)}{M} \Big] \\
    & = \lim_{N_0,N_1\to\infty} \E_{OW} \Big[ \frac{N_0}{N_1M} \sum_{j=1}^{N_1} W_{Zj} \ind\big(Z_j \in A_M^{*,+}(x)\big) \Big] \\
    &= \lim_{N_0,N_1\to\infty} \frac{N_0}{N_1 M} \sum_{j=1}^{N_1} \E_{OW} [ W_{Zj}] \E_{OW} \big[ \ind\big(Z_j \in A_M^{*,+}(x)\big) \big] \\
    &= \lim_{N_0,N_1\to\infty} \frac{N_0}{N_1M} \sum_{j=1}^{N_1} \E_{OW} \big[ \nu_1 \big( A_M^{*,+}(x)\big) \big] = r(x).
\end{align*}

{\bf Part II.} In this part we prove \eqref{eq:DRest,Lp}. By Lyapunov inequality, it suffices to consider the case where $p$ is even. Since $x^p$ is a convex function, plugging in the identity
\[
\E_{OW} [ \hat{r}_M^{*,+}(x) \given \mX, \mW_{\mX} ] = \frac{N_0}{M}\nu_1 ( A_M^{*,+}(x)),
\]
we have
\begin{align*}
    & \E_{OW} \Big[ \Big| \hat{r}_M^{*,+}(x) - r(x) \Big|^p \Big] \\
    \le& 2^{p-1} \bigg\{ \E_{OW} \Big[ \Big| \hat{r}_M^{*,+}(x) - \E_{OW}[ \hat{r}_M^{*,+}(x) \given \mX, \mW_{\mX} ] \Big|^p \Big] + \E_{OW} \Big[ \Big| \frac{N_0}{M}\nu_1 \big( A_M^{*,+}(x)\big) - r(x) \Big|^p \Big] \bigg\}.
\end{align*}
For the second term above, we can expand the product term and by Lemma \ref{lemma:moment, bootstrapBinomial},
\begin{align*}
    \lim_{N_0\to\infty} \E_{OW} \Big[ \Big| \frac{N_0}{M}\nu_1 \big( A_M^{*,+}(x)\big) - r(x) \Big|^p \Big] &= \lim_{N_0\to\infty} \sum_{j=0}^p \binom{p}{j} \Big\{ \Big(- \frac{N_0}{M}\Big)^j \E_{OW} \big[ \nu_1^j \big( A_M^{*,+}(x)\big) \big]  r(x)^{p-j} \Big\} \\
    &= r(x)^p \sum_{j=0}^p \binom{p}{j} (-1)^j = r(x)^p (1-1)^p = 0.
\end{align*}
For the first term, notice that, conditioning on $( \mX, \mW_{\mX} )$, $K_M^{*,+}(x)$ is the sum of $\mW_{\mZ}$-weighted Bernoulli random variables with parameter $\nu_1 ( A_M^{*,+}(x))$. Therefore, by Lemma \ref{lemma:moment, bootstrapBinomial},
\begin{align*}
    & \E_{OW} \Big[ \Big| K_M^{*,+}(x) - N_1 \nu_1\big( A_M^{*,+}(x)\big) \Big|^p \Big| \mX, \mW_{\mX} \Big] = P_{1,p}^* N_1 \nu_1 \big( A_M^{*,+}(x)\big)\big(1-\nu_1 \big( A_M^{*,+}(x)\big)\big) \\
    & ~~~ + P_{2,p}^* \big[N_1 \nu_1 \big( A_M^{*,+}(x)\big)\big(1-\nu_1 \big( A_M^{*,+}(x)\big)\big)\big]^2 + \ldots + P_{p/2,p}^* \big[N_1 \nu_1 \big( A_M^{*,+}(x)\big)\big(1-\nu_1 \big( A_M^{*,+}(x)\big)\big) \big]^{p/2},
\end{align*}
where $\{P_{j,p}^*\}_{j=1}^{p/2}$ denotes certain polynomials in $\nu_1 ( A_M^{*,+}(x))$ that do not contain $N_1$. For any positive integers $p,q$, and $k$ such that $k \le p$, by Lemma \ref{lemma:moment,Bootstrap catch} we have 
\begin{align*}
    \lim_{N_0 \to \infty} \Big( \frac{N_0}{N_1M}\Big)^p \E_{OW} [N_1^p \nu_1^p(A_M^{*,+}(x))] &= \big[r(x)\big]^p,\\
    \lim_{N_0 \to \infty} \Big( \frac{N_0}{N_1M}\Big)^p \Big( \frac{N_0}{M}\Big)^q \E_{OW} [N_1^p \nu_1^{p+q}(A_M^{*,+}(x))] &= \big[r(x)\big]^{p+q}, \\
    \lim_{N_0 \to \infty} \Big( \frac{N_0}{N_1M}\Big)^k \Big( \frac{N_0}{M}\Big)^q \E_{OW} [N_1^k \nu_1^{k+q}(A_M^{*,+}(x))] &= \big[r(x)\big]^{k+q},
\end{align*}
as $M N_1/N_0 \to \infty$ and $N_0/M \to \infty$. We then conclude that $\E_{OW} [N_1^p \nu_1^p(A_M^{*,+}(x))]$ is the dominating term among $\{ \E_{OW} [N_1^k \nu_1^{k+q}(A_M^{*,+}(x))] \}_{k \le p, q \ge 0}$. We then have
\begin{align*}
    & \E_{OW} \Big[ \Big| \hat{r}_M^{*,+}(x) - \E_{OW}[ \hat{r}_M^{*,+}(x) \given \mX, \mW_{\mX} ] \Big|^p \Big] = \Big( \frac{N_0}{N_1M}\Big)^p \E_{OW} \Big[ \Big| K_M^{*,+}(x) - N_1 \nu_1 \big( A_M^{*,+}(x)\big) \Big|^p  \Big] \\
    & ~ \lesssim \Big( \frac{N_0}{N_1M}\Big)^p \E_{OW} \Big[N_1^{p/2} \nu_1^{p/2}(A_M^{*,+}(x))\Big] \lesssim \Big( \frac{N_0}{N_1M}\Big)^{p/2} = o(1).
\end{align*}
Combining these two terms together, we then have 
\begin{align*}
    \lim_{N_0\to\infty} \E_{OW} \Big[ \lvert \hat{r}_M^{*,+}(x) - r(x) \rvert^p \Big] = 0,
\end{align*}
and thus finish the proof.
\end{proof}

\begin{proof}[Proof of Theorem \ref{thm:BT,risk,lp}]

Same as before, we only give proofs for the statements concerning $\hat{r}_M^{*,+}(x)$. 

By \ref{asp:BT,Lp,risk-1}\ref{asp:BT,Lp,risk-2} of Assumption \ref{asp:BT,Lp,risk}, we know that both $S_0$ and $S_1$ are bounded in $\bR^d$, and hence $\nu_0$ and $\nu_1$ are compactly supported. Also, as $f_0, f_1 \in L^1$ and the class of continuous functions is dense in the class of compactly supported $L_1$ functions; for any given $\epsilon > 0$, by Lusin's theorem \citep[Theorem 4.5]{stein2009real},  we may select uniformly continuous functions $g_0,g_1$ such that $\norm{f_0-g_0}_{L_1} \le \epsilon^3$ and $\norm{f_1-g_1}_{L_1} \le \epsilon^3$. Let $\delta>0$ be the number such that for any $x,z \in \bR^d$ satisfying $\norm{z-x} \le \delta$,
\begin{align*}
    \abs{g_0(x) - g_0(z)}{} \le \epsilon^2/3 ~~~\text{and}~~ \abs{g_1(x) - g_1(z)}{} \le \epsilon^2/3.
\end{align*}
We then have, for any $x \in \bR^d$ and $x \in B$ with ${\rm diam}(B) \le \delta$,
\begin{align*}
    & \frac{1}{\lambda(B)} \int_{B} \abs{f_0(x) - f_0(z)}{} \d z \\
     \le& \frac{1}{\lambda(B)} \int_{B} \abs{f_0(x) - g_0(x)}{} \d z + 
    \frac{1}{\lambda(B)} \int_{B} \abs{g_0(x) - g_0(z)}{} \d z +
    \frac{1}{\lambda(B)} \int_{B} \abs{g_0(z) - f_0(x)}{} \d z.
\end{align*}
For the first term $\abs{f_0(x) - g_0(x)}{}$, Markov's inequality yields
\begin{align*}
    \lambda \Big( \Big\{x: \abs{f_0(x) - g_0(x)}{} > \frac{\epsilon^2}{3}\Big\}\Big) \le \frac{3}{\epsilon^2} \norm{f_0-g_0}_{L_1} \le 3 \epsilon.
\end{align*}
For the second term, uniform continuity gives us 
\begin{align*}
    \frac{1}{\lambda(B)} \int_{B} \abs{g_0(x) - g_0(z)}{} \d z \le \max_{x,z \in B} \abs{g_0(x) - g_0(z)}{} \le \epsilon^2/3.
\end{align*}
For the third term, by the definition of maximal functions in Lemma \ref{lemma:HL}, one has
\begin{align*}
    \frac{1}{\lambda(B)} \int_{B} \abs{g_0(z) - f_0(x)}{} \d z \le {\sf M}(f_0-g_0)(x)
\end{align*}
and 
\begin{align*}
    \lambda \Big( \Big\{x: {\sf M}(f_0-g_0)(x) > \frac{\epsilon^2}{3}\Big\}\Big) \le \frac{3C_d}{\epsilon^2} \norm{f_0-g_0}_{L_1} \le 3C_d \epsilon.
\end{align*}
As similar analysis also applies to $f_1$, we can then define 
\begin{align*}
  A_1 = A_1(\epsilon) := & \Big\{x: \Big\lvert f_0(x) - g_0(x) \Big\rvert > \frac{\epsilon^2}{3} \Big\} \bigcup \Big\{x: \Big\lvert f_1(x) - g_1(x) \Big\rvert > \frac{\epsilon^2}{3} \Big\} \\
  & \bigcup \Big\{x:{\sf M}(f_0-g_0)(x) > \frac{\epsilon^2}{3} \Big\} \bigcup \Big\{x:{\sf M}(f_1-g_1)(x) > \frac{\epsilon^2}{3} \Big\};
\end{align*}
so $\lambda(A_1) \le 6(C_d+1)\epsilon$ and for all $x \in A_1^c$ with ${\rm diam}(B) \le \delta$, we have
\begin{align*}
     \frac{1}{\lambda(B)} \int_{B} \abs{f_0(x) - f_0(z)}{} \d z \le \epsilon^2 ~~~\text{and}~~ \frac{1}{\lambda(B)} \int_{B} \abs{f_1(x) - f_1(z)}{} \d z \le \epsilon^2.
\end{align*}
Further, define 
\[
A_2 = A_2(\epsilon) := \Big\{x: f_1(x)<\epsilon \Big\}
\]
and write the quantity of our interest as
\begin{align*}
     &\E_{OW} \Big[ \int_{\bR^d} \Big\lvert \hat{r}_M^{*,+}(x) - r(x) \Big\rvert^p f_0(x) \d x \Big] \\
     =& \E_{OW} \Big[ \int_{\bR^d} \Big\lvert \hat{r}_M^{*,+} - r(x) \Big\rvert^p \Big\{\ind(x \in A_1^c \cap A_2^c) + \ind(x \in A_1^c \cap A_2) + \ind(x \in A_1 ) \Big\} f_0(x) \d x \Big].
\end{align*}
Since this equality holds for arbitrary $\epsilon>0$, we can restricting our attention to those $\epsilon \le f_L$ and separate the proof in the following three cases.

\vspace{0.2cm}

{\bf Case I.} $x \in A_1^c(\epsilon) \cap A_2^c(\epsilon)$.

For $x \in A_1^c \cap A_2^c$, we get by $\epsilon \le f_L$ and the definition of $A_2$ that
\begin{align*}
     &\frac{1}{\lambda(B)} \int_{B} \abs{f_0(x) - f_0(z)}{} \d z \le \epsilon^2 \le \epsilon f_L \le \epsilon f_0(x) \\
     {\rm and}~~~&\frac{1}{\lambda(B)} \int_{B} \abs{f_1(x) - f_1(z)}{} \d z \le \epsilon^2 \le \epsilon f_1(x),
\end{align*}
which imply, for $\norm{z-x} \le \delta/2$, one has
\begin{align*}
  & \Big\lvert \frac{\nu_0(B_{x,\lVert z-x \rVert})}{\lambda(B_{x,\lVert z-x \rVert})} - f_0(x) \Big\rvert \le \epsilon f_0(x), ~~ \Big\lvert \frac{\nu_0(B_{z,\lVert z-x \rVert})}{\lambda(B_{z,\lVert z-x \rVert})} - f_0(x) \Big\rvert \le \epsilon f_0(x),\\
  & \Big\lvert \frac{\nu_1(B_{x,\lVert z-x \rVert})}{\lambda(B_{x,\lVert z-x \rVert})} - f_1(x) \Big\rvert \le \epsilon f_1(x), ~~\Big\lvert \frac{\nu_1(B_{z,\lVert z-x \rVert})}{\lambda(B_{z,\lVert z-x \rVert})} - f_1(x) \Big\rvert \le \epsilon f_1(x).
\end{align*}
Similarly as Case II.1 in the proof of Lemma \ref{lemma:moment,Bootstrap catch}, for any positive integer $p$, we can let $\eta_N = \eta_{N,p} = 4p \log(N_0/M)$ and take $N_0$ sufficiently large such that for all $x \in S_0$,
\begin{align*}
    \eta_N \frac{M}{N_0} = 4p\frac{M}{N_0} \log\Big(\frac{N_0}{M}\Big) < (1-\epsilon)f_L \lambda(B_{0,\delta}) \le (1-\epsilon)f_0(x)  \lambda(B_{0,\delta}).
\end{align*}
Then, following an argument analogous to Lemma \ref{lemma:moment,Bootstrap catch}, we have the $L^p$-moment for the bootstrap catchment area's $\nu_1$-measure satisfies
\begin{align*}
    \lim_{\epsilon \to 0} \lim_{N_0\to\infty} \Big(\frac{N_0}{M}\Big)^p \E_{OW}\big[\nu_1^p\big(A_M^{*,+}(x)\big)\big] = \big[r(x)\big]^p.
\end{align*}
By selection of $\eta_N$, we know this limit holds uniformly for $x \in A_1^c(\epsilon) \cap A_2^c(\epsilon)$. Proceeding as the proof of Theorem \ref{thm:cons,BTlp}, this implies
\begin{align*}
    \lim_{\epsilon \to 0} \lim_{N_0\to\infty} \sup_{x \in A_1^c \cap A_2^c} \E_{OW} \Big[ \lvert \hat{r}_M^{*,+}(x) - r(x) \rvert^p \Big] = 0.
\end{align*}
Finally, by Fubini's theorem and Fatou's Lemma we have
\begin{align*}
    \lim_{\epsilon \to 0} \lim_{N_0\to\infty} \E_{OW} \Big[ \int_{\bR^d} \Big\lvert \hat{r}_M^{*,+} - r(x) \Big\rvert^p \ind(x \in A_1^c(\epsilon) \cap A_2^c(\epsilon)) f_0(x) \d x \Big] = 0.
\end{align*}

\vspace{0.2cm}

{\bf Case II.} $x \in A_1^c(\epsilon) \cap A_2(\epsilon)$.

For this case, we have
\begin{align*}
  & \Big\lvert \frac{\nu_0(B_{x,\lVert z-x \rVert})}{\lambda(B_{x,\lVert z-x \rVert})} - f_0(x) \Big\rvert \le \epsilon f_0(x),~~ \Big\lvert \frac{\nu_0(B_{z,\lVert z-x \rVert})}{\lambda(B_{z,\lVert z-x \rVert})} - f_0(x) \Big\rvert \le \epsilon f_0(x),\\
  & \Big\lvert \frac{\nu_1(B_{x,\lVert z-x \rVert})}{\lambda(B_{x,\lVert z-x \rVert})} - f_1(x) \Big\rvert \le \epsilon^2,~~ \Big\lvert \frac{\nu_1(B_{z,\lVert z-x \rVert})}{\lambda(B_{z,\lVert z-x \rVert})} - f_1(x) \Big\rvert \le \epsilon^2.
\end{align*}
Take $N_0$ sufficiently large such that $\eta_N$ is the same as Case I. Proceeding similarly to the proof of Case II.2 in Lemma~\ref{lemma:moment,Bootstrap catch}, we have
\begin{align*}
    \lim_{\epsilon \to 0} \lim_{N_0\to\infty} \sup_{x \in A_1^c \cap A_2} \E_{OW} \Big[ \lvert \hat{r}_M^{*,+}(x) - r(x) \rvert^p \Big] = 0.
\end{align*}
Fubini's theorem and Fatou's Lemma then yield
\begin{align*}
    \lim_{\epsilon \to 0} \lim_{N_0\to\infty} \E_{OW} \Big[ \int_{\bR^d} \Big\lvert \hat{r}_M^{*,+} - r(x) \Big\rvert^p \ind(x \in A_1^c(\epsilon) \cap A_2(\epsilon)) f_0(x) \d x \Big] = 0.
\end{align*}

\vspace{0.2cm}

{\bf Case III.} $x \in A_1(\epsilon)$.

In this case, by Assumption \ref{asp:BT,Lp,risk}\ref{asp:BT,Lp,risk-3}\ref{asp:BT,Lp,risk-4}, we know that for any $x \in S_0$ and $z \in S_1$, 
\begin{align*}
    \nu_0(B_{z,\lVert z-x \rVert}) \ge f_L \lambda(B_{z,\lVert z-x \rVert} \cap S_0) \ge af_L \lambda(B_{z,\lVert z-x \rVert}) \ge \frac{af_L}{f_U} \nu_1(B_{x,\lVert z-x \rVert}).
\end{align*}
We can deal with the $L^p$-moment for the bootstrap catchment area's $\nu_1$-measure similarly as case II in the proof of Lemma \ref{lemma:moment,Bootstrap catch}:
\begin{align*}
  & \Big(\frac{N_0}{M}\Big)^p \E_{OW}\big[\nu_1^p\big(A_M^{*,+}(x)\big)\big] = \Big(\frac{N_0}{M}\Big)^p \P_{OW} \big(\tZ_1,\ldots,\tZ_p \in A_M^{*,+}(x)\big)\\
  & ~= \Big(\frac{N_0}{M}\Big)^p \P_{OW} \big( \lVert x - \tZ_1 \rVert \le \lVert \cX_{(M)}^* (\tZ_1) - \tZ_1 \rVert, \ldots, \lVert x - \tZ_p \rVert \le \lVert \cX_{(M)}^*(\tZ_p) - \tZ_p \rVert \big)\\
  & ~\le \Big(\frac{N_0}{M}\Big)^p \P_{OW} \Big( \max_{j \in \zahl{p}} \nu_0(B_{\tZ_j,\lVert x - \tZ_j \rVert}) \le \max_{j \in \zahl{p}} \nu_0(B_{\tZ_j,\lVert \cX_{(M)}^*(\tZ_j) - \tZ_j \rVert}) \Big) \\
  & ~\le \Big(\frac{N_0}{M}\Big)^p \P_{OW} \Big( \frac{af_L}{f_U} \max_{j \in \zahl{p}} \nu_1(B_{x,\lVert x - \tZ_j \rVert}) \le \max_{j \in \zahl{p}} \nu_0(B_{\tZ_j,\lVert \cX_{(M)}^*(\tZ_j) - \tZ_j \rVert}) \Big)  \\
  & ~ \le \Big(\frac{f_U}{af_L}\Big)^p (1+o(1)) =O(1),
\end{align*}
where the asymptotic terms $o(1)$ and $O(1)$ depends only on $N_0, M$, and $p$. Therefore, by convexity of $x^p$ and following the same analysis in proof of Theorem \ref{thm:cons,BTlp}, the $L^p$ risk satisfies
\begin{align*}
    & \E_{OW} \Big[ \Big| \hat{r}_M^{*,+}(x) - r(x) \Big|^p \Big] \\
    & ~\lesssim \E_{OW} \Big[ \Big| \hat{r}_M^{*,+}(x) - \E_{OW}\big[ \hat{r}_M^{*,+}(x) \given \mX, \mW_{\mX} \big] \Big|^p \Big] + \E_{OW} \Big[ \Big| \E_{OW}\big[ \hat{r}_M^{*,+}(x) \given \mX, \mW_{\mX} \big] \Big|^p \Big] + \Big| r(x) \Big|^p \\
    & ~\lesssim \Big( \frac{N_0}{N_1M}\Big)^p \E_{OW} \Big[N_1^{p/2} \nu_1^{p/2}(A_M^{*,+}(x))\Big] + \Big( \frac{N_0}{M} \Big)^p \E_{OW} \Big[ \nu_1^p \big( A_M^{*,+}(x)\big) \Big] + \Big( \frac{f_U}{f_L} \Big)^p \lesssim 1,
\end{align*}
which holds uniformly for $x \in A_1(\epsilon)$. Consequently, by Fubini's theorem and Fatou's Lemma one has
\begin{align*}
    \E_{OW} \Big[ \int_{\bR^d} \Big\lvert \hat{r}_M^{*,+} - r(x) \Big\rvert^p \ind(x \in A_1(\epsilon) ) f_0(x) \d x \Big] \lesssim f_U \lambda(A_1(\epsilon)) \lesssim \epsilon.
\end{align*}

\vspace{0.2cm}

Combining the above three terms together, we obtain 
\begin{align*}
    & \lim_{N_0\to\infty} \E_{OW} \Big[ \int_{\bR^d} \Big\lvert \hat{r}_M^{*,+}(x) - r(x) \Big\rvert^p f_0(x) \d x \Big] \\
    & = \lim_{\epsilon \to 0} \lim_{N_0\to\infty} \E_{OW} \Big[ \int_{\bR^d} \Big\lvert \hat{r}_M^{*,+} - r(x) \Big\rvert^p \Big\{\ind(x \in A_1^c \cap A_2^c) + \ind(x \in A_1^c \cap A_2) + \ind(x \in A_1 ) \Big\} f_0(x) \d x \Big] \\
    & =0
\end{align*}
and thus finish the proof.
\end{proof}

\subsection{Proofs of lemmas in Section \ref{sec:proof-thm}}

This section gives proofs for the lemmas in Section \ref{sec:proof-thm}. Note that we can rewrite
\begin{align*}
    \hat{\tau}_{M}^{*,{\rm bc}} = \frac{1}{n}\sum_{i=1}^n \Big\{ \tilde{Y}_i^*(1) - \tilde{Y}_i^*(0) \Big\},
\end{align*}
where
\begin{align*}
\tilde{Y}_i^*(1) := 
    \begin{cases}
        Y_i^* & \mbox{ if } D_i^*=1, \\   
        \hat{\mu}_{1}(X_i) + M^{-1} \sum_{j \in \mathcal{J}_M^*(i)} \big( Y_j^* - \hat{\mu}_{1}(X_j^*) \big) & \mbox{ if } D_i^*=0
    \end{cases}    
\end{align*}
and
\begin{align*}
\tilde{Y}_i^*(0) := 
    \begin{cases}
         \hat{\mu}_{0}(X_i) + M^{-1} \sum_{j \in \mathcal{J}_M^*(i)} \big( Y_j^* - \hat{\mu}_{0}(X_j^*) \big) & \mbox{ if } D_i^*=1, \\   
         Y_i^* & \mbox{ if } D_i^*=0.
    \end{cases}    
\end{align*}
This observation will be used below.

\begin{proof}[Proof of Lemma \ref{lemma:BTest,rewrite}]
We give calculations for $\hat{\tau}_{M}^{*}$ and $\hat{\tau}_{M}^{*,{\rm bc}}$, respectively.

{\bf Part I.}
    For a unit such that $D_i=1$ and $W_{i}>0$, we define
    \begin{align*}
        K_M^*(i) := W_{i}^{-1} \sum_{j=1,D_j^* = 0}^n \sum_{\ell \in \mathcal{J}_M^*(j)} \ind(O_\ell^* = O_i)
    \end{align*}
    and let $K_M^*(i)$ be some arbitrary number between $K_M^{*,+}(i)$ and $K_M^{*,-}(i)$ if $W_{i}=0$. Then, by simple algebra and rearranging the indices, one has
    \begin{align*}
        \sum_{i=1,D_i^* = 0}^n \sum_{j \in \mathcal{J}_M^*(i)} Y_j^* &= \sum_{j=1,D_j^* = 0}^n \sum_{\ell \in \mathcal{J}_M^*(j)} Y_\ell^* = \sum_{j=1,D_j^* = 0}^n \sum_{\ell \in \mathcal{J}_M^*(j)} \Big\{ \sum_{i=1,D_i=1} \ind(O_\ell^* = O_i) Y_i \Big\} \\
        & =\sum_{i=1,D_i=1} \Big\{ \sum_{j=1,D_j^* = 0}^n \sum_{\ell \in \mathcal{J}_M^*(j)} \ind(O_\ell^* = O_i) \Big\} Y_i = \sum_{i=1,D_i=1} W_{i} K_M^*(i) Y_i.
    \end{align*}
    Employ the same argument while swapping $0$ and $1$. Notice that under this definition, $K_M^{*,-}(i) \le K_M^*(i) \le K_M^{*,+}(i)$ holds for all $i \in \zahl{n}$, since one can write
    \begin{align*}
        W_{i}Y_i K_M^{*}(i) = \sum_{j=1,D_j^* = 0}^n \sum_{\ell \in \mathcal{J}_M^*(j)} \ind(O_\ell^* = O_i) Y_\ell^*
    \end{align*}
    knowing that, given $Y_i \ge 0$, it holds true
    \begin{align*}
        W_{i}Y_i K_M^{*,-}(i) &= \sum_{j=1,D_j^* = 0}^n  \ind \big(\lVert X_i-X_j^* \rVert < \lVert \cX_{(M)}^*(X_j^*)-X_j^* \rVert \big) W_{i} Y_i \\
        & = \sum_{j=1,D_j^* = 0}^n \sum_{\ell=1,D_\ell^* = 1}^n \ind \big(\lVert X_\ell^*-X_j^* \rVert < \lVert \cX_{(M)}^*(X_j^*)-X_j^* \rVert \big) \ind(O_\ell^* = O_i) Y_\ell^* \\
        &\le \sum_{j=1,D_j^* = 0}^n \sum_{\ell \in \mathcal{J}_M^*(j)} \ind(O_\ell^* = O_i) Y_\ell^* = \sum_{j=1,D_j^* = 0}^n \sum_{\ell=1,D_\ell^* = 1}^n \ind\big(\ell \in \cJ_M^*(j)\big) \ind(O_\ell^* = O_i)  Y_\ell^* \\
        &\le \sum_{j=1,D_j^* = 0}^n \sum_{\ell=1,D_\ell^* = 1}^n \ind \big(\lVert X_\ell^*-X_j^* \rVert \le \lVert \cX_{(M)}^*(X_j^*)-X_j^* \rVert \big) \ind(O_\ell^* = O_i) Y_\ell^* \\
        &= \sum_{j=1,D_j^* = 0}^n  \ind \big(\lVert X_i-X_j^* \rVert \le \lVert \cX_{(M)}^*(X_j^*)-X_j^* \rVert \big) W_{i} Y_i = W_{i}Y_i K_M^{*,+}(i)
    \end{align*}
    for all units such that $D_i=1$ with $W_{i}>0$ and this applies too when we swap $0$ and $1$.
    Consequently, we can write
    \begin{align*}
        \hat{\tau}_{M}^{*} &= \frac{1}{n}\sum_{i=1}^n \big[ \hat{Y}_i^*(1) - \hat{Y}_i^*(0) \big] \\
        &= \frac{1}{n} \sum_{i=1,D_i^* = 1}^n Y_i^* + \frac{1}{nM} \sum_{i=1,D_i^* = 0}^n \sum_{j \in \mathcal{J}_M^*(i)} Y_j^* - \frac{1}{n} \sum_{i=1,D_i^* = 0}^n Y_i^* - \frac{1}{nM} \sum_{i=1,D_i^* = 1}^n \sum_{j \in \mathcal{J}_M^*(i)} Y_j^*  \\
        &= \frac{1}{n}\sum_{i=1}^n W_i (2D_i-1) Y_i +  \frac{1}{n}\sum_{i=1}^n W_i (2D_i-1) \frac{K_M^*(i)}{M} Y_i.
    \end{align*}
    
    \vspace{0.2cm}
    
{\bf Part II.}
    Similarly, we write
    \begin{align*}
        \hat{\tau}_{M}^{*,{\rm bc}} &= \frac{1}{n}\sum_{i=1}^n \big[ \tilde{Y}_i^*(1) - \tilde{Y}_i^*(0) \big] \\
        &= \frac{1}{n} \sum_{i=1,D_i^* = 1}^n Y_i^* + \frac{1}{n} \sum_{i=1,D_i^* = 0}^n  \hat{\mu}_{1}(X_i^*) + \frac{1}{nM} \sum_{i=1,D_i^* = 0}^n \sum_{j \in \mathcal{J}_M^*(i)} \big( Y_j^* - \hat{\mu}_{1}(X_j^*) \big) \\
        & ~~- \frac{1}{n} \sum_{i=1,D_i^* = 0}^n Y_i^* - \frac{1}{n} \sum_{i=1,D_i^* = 1}^n  \hat{\mu}_{0}(X_i^*) - \frac{1}{nM} \sum_{i=1,D_i^* = 1}^n \sum_{j \in \mathcal{J}_M^*(i)} \big( Y_j^* - \hat{\mu}_{0}(X_j^*) \big) \\
        &= \frac{1}{n} \sum_{i=1,D_i = 1}^n W_{i} Y_i + \frac{1}{n} \sum_{i=1,D_i = 0}^n W_{i}  \hat{\mu}_{1}(X_i) + \frac{1}{nM} \sum_{i=1,D_i = 1}^n W_i K_M^*(i) \big( Y_i - \hat{\mu}_{1}(X_i) \big)\\
        & ~~- \frac{1}{n} \sum_{i=1,D_i = 0}^n W_{i} Y_i - \frac{1}{n} \sum_{i=1,D_i = 1}^n  W_{i} \hat{\mu}_{0}(X_i) - \frac{1}{nM} \sum_{i=1,D_i = 0}^n W_i K_M^*(i) \big( Y_i - \hat{\mu}_{0}(X_i) \big).
    \end{align*}
    Adding and subtracting $n^{-1} [ \sum_{i=1,D_i = 1}^n W_{i}  \hat{\mu}_{1}(X_i) - \sum_{i=1,D_i = 0}^n W_{i}  \hat{\mu}_{0}(X_i) ]$, we then get 
    \begin{align*}
        \hat{\tau}_{M}^{*,{\rm bc}} = 
        \frac{1}{n} \sum_{i=1}^n W_i \big[\hat{\mu}_1(X_i) - \hat{\mu}_0(X_i)\big] + \frac{1}{n} \sum_{i=1}^n W_i (2D_i-1) \Big(1 + \frac{K^*_M(i)}{M}\Big) \big( Y_i - \hat{\mu}_{D_i}^*(X_i) \big),
    \end{align*}
    as desired. Trivially, from the second equality one can see this formulation agrees with the definition we give in Section \ref{sec:bootstrap} by simply writing 
    \begin{align*}
        \hat B_M^* &= \frac{1}{n}\sum_{i=1}^n\frac{2D_i^*-1}{M}\sum_{j\in\cJ^*_M(i)}\Big\{\hat\mu_{1-D_i^*}(X_i^*)-\hat\mu_{1-D_i^*}(X_j^*)\Big\} \\
        &=  \frac{1}{n} \sum_{i=1,D_i^* = 1}^n  \hat{\mu}_{0}(X_i^*) - \frac{1}{n} \sum_{i=1,D_i^* = 0}^n  \hat{\mu}_{1}(X_i^*) -\frac{1}{nM} \sum_{i=1,D_i^* = 1}^n \sum_{j \in \mathcal{J}_M^*(i)} \hat \mu_0(X_j^*) \\
        & ~~ + \frac{1}{nM} \sum_{i=1,D_i^* = 0}^n \sum_{j \in \mathcal{J}_M^*(i)} \hat \mu_1(X_j^*),
    \end{align*}
    which completes the proof.
\end{proof}

\begin{proof}[Proof of Lemma \ref{lemma:mbc,BTdist}]
    We write
    \begin{align*}
        & \Big(\frac{n_{1-D_i}}{M}\Big)^{\frac{p}{d}} \E_{OW} \Big[ \big\lVert U_{M,i}^* \big\rVert^p \Biggiven \mD\Big] \\
        &= \Big(\frac{n_{1-D_i}}{M}\Big)^{\frac{p}{d}} \int_{0}^\infty \P_{OW} \Big( \big\lVert \cX_{(M)}^*(X_i^*) - X_i^* \big\rVert \ge u \Biggiven \mD \Big) p u^{p-1} \d u \\
        &= \Big(\frac{1}{M}\Big)^{\frac{p}{d}} \int_{0}^\infty \P_{OW} \Big( \big\lVert \cX_{(M)}^*(X_i^*) - X_i^* \big\rVert \ge un_{1-D_i}^{-1/d} \Biggiven \mD \Big) p u^{p-1} \d u.  \yestag \label{eq:matching,boundedDist,integration}
    \end{align*}
Notice that, by Assumption \ref{asp:BT,Lp,risk}, one has for any $\omega \in \{0,1\}, x \in \cX$, 
\begin{align*}
    \nu_\omega\big(B_{x,u} \cap \cX \big) \ge f_L \lambda\big(B_{x,u} \cap \cX \big) \ge f_L a \lambda\big(B_{x,u} \big) = f_L a V_d u^d,
\end{align*}
where $V_d$ is the Lebesgue measure of the unit ball in $\bR^d$. Therefore, letting $c_0 := f_L a V_d$, we have
\begin{align*}
    & \P_{OW} \Big( \big\lVert \cX_{(M)}^*(X_i^*) - X_i^* \big\rVert \ge un_{1-D_i}^{-1/d} \Biggiven \mD, X_i^*=x \Big) \\
     =& \sum_{k=1}^{n_{1-D_i}} \P\Big( \big\lVert \cX_{(k)}(x) - x \big\rVert \ge un_{1-D_i}^{-1/d} \Biggiven \mD, \cX_{(M)}^*(x) = \cX_{(k)}(x), X_i^*=x \Big) p_k \\
     =& \sum_{k=1}^{n_{1-D_i}} \P\Big( \nu_{1-D_i}\big(B_{x,\lVert \cX_{(k)}(x) - x \rVert}\big) \ge \nu_{1-D_i}\big(B_{x,un_{1-D_i}^{-1/d}}\big) \Biggiven \mD, \cX_{(M)}^*(x) = \cX_{(k)}(x), X_i^*=x \Big) p_k \\
    \le& \sum_{k=1}^{n_{1-D_i}} \P\Big( U_{(k)} \ge \nu_{1-D_i}\big(B_{x,un_{1-D_i}^{-1/d} } \cap \cX \big) \Biggiven \mD, \cX_{(M)}^*(x) = \cX_{(k)}(x), X_i^*=x \Big) p_k \\
    \le& \sum_{k=1}^{n_{1-D_i}} \P\Big({\rm Bin}\big(n_{1-D_i}, c_0 u^d n_{1-D_i}^{-1} \big) \le k \Biggiven \mD, \cX_{(M)}^*(x) = \cX_{(k)}(x) \Big) p_k \\
    \le&  \sum_{k=1}^{n_{1-D_i}} \Big[\ind\big(k < c_0 u^d\big) \exp\Big\{ k - c_0 u^d + k \log\Big(\frac{c_0 u^d}{k}\Big)\Big\} + \ind\big(k \ge c_0 u^d\big) \Big] p_k  \yestag \label{eq:matching,boundedDist,unifbound}
\end{align*}
by Chernoff bound. Since this bound is uniform for $x \in \cX$, putting \eqref{eq:matching,boundedDist,integration} and \eqref{eq:matching,boundedDist,unifbound} together we have
\begin{align*}
    & \Big(\frac{n_{1-D_i}}{M}\Big)^{\frac{p}{d}} \E_{OW} \Big[ \big\lVert U_{M,i}^* \big\rVert^p \big| \mD\Big] \\
    \le& p M^{-\frac{p}{d}} \sum_{k=1}^{n_{1-D_i}} \int_{0}^\infty  \Big[\ind\big(k < c_0 u^d\big) \exp\Big\{ k - c_0 u^d + k \log\Big(\frac{c_0 u^d}{k}\Big)\Big\} + \ind\big(k \ge c_0 u^d\big) \Big] p_k u^{p-1} \d u \\
    =& \frac{p}{d} (c_0M)^{-\frac{p}{d}} \sum_{k=1}^{n_{1-D_i}} \Big[ \int_{0}^k u^{\frac{p}{d}-1} \d u + \Big(\frac{e}{k}\Big)^k \int_{k}^\infty u^{k + \frac{p}{d}-1} e^{-u} \d u \Big] p_k \\
    \le& c_0^{-\frac{p}{d}} \sum_{k=1}^{n_{1-D_i}} \Big(\frac{k}{M}\Big)^{\frac{p}{d}} p_k +  \frac{p}{d} (c_0M)^{-\frac{p}{d}} \sum_{k=1}^{n_{1-D_i}} \Big[ \Big(\frac{e}{k}\Big)^k \int_{k}^\infty u^{k + \frac{p}{d}-1} e^{-u} \d u \Big] p_k. \yestag \label{eq:matching,boundedDist,2terms}
\end{align*}

For the first term in \eqref{eq:matching,boundedDist,2terms}, with exactly the same analysis as in \eqref{eq:matching,DRlocal,boundedSum}, we know it is bounded by, say,  $C_p'$. 

For the second term in \eqref{eq:matching,boundedDist,2terms}, for any $k \to \infty$ and $k \le M$ we have, by Stirling's formula,
\begin{align*}
    & M^{-\frac{p}{d}} \Big(\frac{e}{k}\Big)^k \int_{k}^\infty u^{k + \frac{p}{d}-1} e^{-u} \d u = M^{-\frac{p}{d}} \Big(\frac{e}{k}\Big)^k \Gamma \Big( k+\frac{p}{d}, k\Big) \le M^{-\frac{p}{d}} \Big(\frac{e}{k}\Big)^k \Gamma \Big( k+\frac{p}{d}\Big) \\
    & \asymp M^{-\frac{p}{d}} \Big(\frac{e}{k}\Big)^k \Big(\frac{k}{e}\Big)^{\frac{p}{d}-1} \Gamma(k+1) \asymp M^{-\frac{p}{d}} \Big(\frac{e}{k}\Big)^k \Big(\frac{k}{e}\Big)^{\frac{p}{d}-1} \sqrt{2\pi k} \Big(\frac{e}{k}\Big)^{-k} \asymp \Big(\frac{k}{M}\Big)^{\frac{p}{d}} \sqrt{\frac{2\pi}{k}} = o(1); \yestag \label{eq:matching,boundedDist,Stirling}
\end{align*}
here, we have used $``\asymp"$ to denote asymptotic equivalence. Then, this sequence in $k$ is upper bounded by some constant $B_p$. For any $K \le M$, \eqref{eq:matching,boundedDist,2terms} is then upper bounded by
\begin{align*}
    \Big(\frac{n_{1-D_i}}{M}\Big)^{\frac{p}{d}} \E_{OW} \Big[ \big\lVert U_{M,i}^* \big\rVert^p \big| \mD\Big] &\le
    C_p' + \frac{p}{d} c_0^{-\frac{p}{d}} B_p \sum_{k \le K} p_k + \frac{p}{d} c_0^{-\frac{p}{d}} \frac{\sqrt{2\pi}}{M^{1/4}} \exp\Big\{\frac{1}{12M^{1/2}}\Big\} \sum_{k \le K} \Big(\frac{k}{M}\Big)^{\frac{p}{d}} p_k \\
    & ~~+ \frac{p}{d} (c_0M)^{-\frac{p}{d}} \sum_{k>K} \Big[ \Big(\frac{e}{k}\Big)^k \int_{k}^\infty u^{k + \frac{p}{d}-1} e^{-u} \d u \Big] p_k.
\end{align*}
Setting $K = M^{1/2}$, since applying \eqref{eq:matching,boundedDist,Stirling} on those terms with $k \ge K+1$ implies we must also have
\begin{align*}
    \frac{p}{d} (c_0M)^{-\frac{p}{d}} \sum_{k>K} \Big[ \Big(\frac{e}{k}\Big)^k \int_{k}^\infty u^{k + \frac{p}{d}-1} e^{-u} \d u \Big] p_k \le\frac{p}{d} c_0^{-\frac{p}{d}} \frac{\sqrt{2\pi}}{M^{1/4}} \exp\Big\{\frac{1}{12M^{1/2}}\Big\}  \sum_{k>K} \Big(\frac{k}{M}\Big)^{\frac{p}{d}} p_k,
\end{align*}
we can then conclude
\begin{align*}
    \Big(\frac{n_{1-D_i}}{M}\Big)^{\frac{p}{d}} \E_{OW} \Big[ \big\lVert U_{M,i}^* \big\rVert^p \big| \mD\Big] \le C_p' (1+o(1))
\end{align*}
by noting Chernoff bound applied to the upper tail probability gives
\begin{align*}
    \sum_{k \le K} p_k = \P \Big( {\rm Bin}\Big(N_0, \frac{K}{N_0}\Big) > M \Big) \le \exp\Big\{ K \Big[ \frac{M}{K} - 1 - \frac{M}{K} \log\Big(\frac{M}{K}\Big) \Big] \Big\} \le \exp\{-M\}.
\end{align*}
It then yields the asserted inequality \eqref{eq:matching,boundedDist,goal} by taking $C_p := 2C_p'$.
\end{proof}

\subsection{Auxiliary lemmas}\label{sec:aux}

\begin{lemma}[Lemma S3.1, \cite{lin2023supp}]\label{lemma:leb,p}
  Let $\nu$ be a probability measure on $\bR^d$ admitting a Lebesgue density $f$. Let $x \in \bR^d$ be a Lebesgue point of $f$. We then have, for any $\epsilon \in (0,1)$, there exists $\delta = \delta_x>0$ such that for any $z \in \bR^d$ such that $\lVert z - x \rVert \le \delta$, we have
  \begin{align*}
    \Big\lvert \frac{\nu(B_{x,\lVert z-x \rVert})}{\lambda(B_{x,\lVert z-x \rVert})} - f(x) \Big\rvert \le \epsilon,~~ \Big\lvert \frac{\nu(B_{z,\lVert z-x \rVert})}{\lambda(B_{z,\lVert z-x \rVert})} - f(x) \Big\rvert \le \epsilon.
  \end{align*}
\end{lemma}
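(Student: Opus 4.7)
The plan is to derive both estimates directly from the defining property of a Lebesgue point, namely that
\[
\lim_{r \to 0^+} \frac{1}{\lambda(B_{x,r})} \int_{B_{x,r}} |f(y) - f(x)|\, \d y = 0.
\]

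First, for the claim involving $B_{x,\lVert z-x\rVert}$, I would write $r = \lVert z-x\rVert$ and note the straightforward inequality
\[
\Big|\frac{\nu(B_{x,r})}{\lambda(B_{x,r})} - f(x)\Big| = \Big|\frac{1}{\lambda(B_{x,r})} \int_{B_{x,r}} (f(y)-f(x))\,\d y\Big| \le \frac{1}{\lambda(B_{x,r})} \int_{B_{x,r}} |f(y) - f(x)|\, \d y.
\]
The Lebesgue point property then directly produces some $\delta_1 > 0$ for which this right-hand side is at most $\epsilon$ whenever $r \le \delta_1$; this first step is essentially the definition.

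The second claim, involving the off-center ball $B_{z,\lVert z-x\rVert}$, is where a small amount of geometry enters. I would exploit the triangle-inequality inclusion $B_{z,r} \subset B_{x,2r}$ (any $y \in B_{z,r}$ obeys $\lVert y-x\rVert \le \lVert y-z\rVert+\lVert z-x\rVert \le 2r$) together with the volume identity $\lambda(B_{z,r}) = 2^{-d}\lambda(B_{x,2r})$. Extending the integral to the larger concentric ball then gives
\[
\Big|\frac{\nu(B_{z,r})}{\lambda(B_{z,r})} - f(x)\Big| \le \frac{1}{\lambda(B_{z,r})} \int_{B_{x,2r}} |f(y) - f(x)|\, \d y = \frac{2^d}{\lambda(B_{x,2r})} \int_{B_{x,2r}} |f(y) - f(x)|\, \d y.
\]
Invoking the Lebesgue point property again, this time with tolerance $2^{-d}\epsilon$, supplies some $\delta_2 > 0$ such that whenever $2r \le \delta_2$ the right-hand side is at most $\epsilon$.

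Finally, setting $\delta := \min(\delta_1, \delta_2/2)$ secures both inequalities simultaneously. I do not anticipate any substantial obstacle; the only genuinely nontrivial move is the off-center to centered reduction in the second bound, which is handled cleanly by the doubling inclusion, at the expense of a constant $2^d$ that is simply absorbed into the Lebesgue-point threshold.
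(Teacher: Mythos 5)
Your argument is correct: the centered bound is immediate from the Lebesgue-point definition, and the off-center bound follows cleanly from the inclusion $B_{z,r}\subset B_{x,2r}$ (with $r=\lVert z-x\rVert$) together with the volume ratio $2^d$, absorbed by applying the Lebesgue-point property at tolerance $2^{-d}\epsilon$; finally take $\delta=\min(\delta_1,\delta_2/2)$ (the degenerate case $z=x$ is vacuous). The paper itself does not prove this lemma but quotes it as Lemma S3.1 of \cite{lin2023supp}, and your doubling argument is the standard route and is essentially the same as the cited proof.
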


\begin{lemma}
[Central moments of the bootstrap binomial distribution]
\label{lemma:moment, bootstrapBinomial}

Suppose that $\{X_i\}_{i=1}^n \overset{i.i.d.}{\sim} {\rm Ber}(p)$ and 
the boostrap weights $W=(W_1, \ldots, W_n)' \sim {\rm Mult}(n, 1/n, \ldots, 1/n )$ independent of $\{X_i\}_{i=1}^n$. Then for any positive integer $k$ and $Y := \sum_{i=1}^n W_i X_i$, its $k$-th central moments $\mu_k^*$ can be written into the following forms for odd and even $k$ values:
\begin{align*}
    \mu_{2r}^* &= P_{1,2r}^* np(1-p) + P_{2,2r}^* [np(1-p)]^2 + \ldots + P_{r,2r}^* [np(1-p)]^r \\
    \mu_{2r+1}^* &= P_{1,2r+1}^* np(1-p) + P_{2,2r+1}^* [np(1-p)]^2 + \ldots + P_{r,2r+1}^* [np(1-p)]^r,
\end{align*}
where $P_{u,v}^*$ denotes certain polynomial in $p$ and $1-p$ that does not contain $n$.
\end{lemma}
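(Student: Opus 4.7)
The plan is to exploit the independence of the multinomial weights $\mathbf{W} = (W_1, \ldots, W_n)$ from $\{X_i\}$ together with the identity $\sum_i W_i = n$, which yields the clean centering $Y - \E[Y] = Y - np = \sum_{i=1}^n W_i (X_i - p)$. Expanding the $k$-th power and factoring using independence gives
\[
    \mu_k^* = \sum_{i_1, \ldots, i_k = 1}^n \E\big[W_{i_1} \cdots W_{i_k}\big]\cdot \E\Big[\prod_{\ell=1}^k (X_{i_\ell} - p)\Big].
\]
I would then classify each multi-index $(i_1, \ldots, i_k)$ by the partition $\pi$ of $\{1, \ldots, k\}$ defined by $\ell \sim \ell' \iff i_\ell = i_{\ell'}$. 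Because $X_1, \ldots, X_n$ are i.i.d.\ with $\E[X_1 - p] = 0$, the $X$-factor equals $\prod_{B \in \pi} \mu_{|B|}$ where $\mu_m := \E[(X_1 - p)^m]$, and vanishes as soon as $\pi$ contains a singleton block. Hence only partitions whose blocks all have size $\ge 2$ survive, which forces $|\pi| \le \lfloor k/2 \rfloor$.

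For the surviving $\pi$, I would invoke the identity $\mu_m = p(1-p)\big[(1-p)^{m-1} + (-p)^{m-1}\big]$ to see that each $\mu_m$ with $m \ge 2$ is a polynomial in $p$ and $1-p$ carrying $p(1-p)$ as an explicit factor. Consequently, for a partition $\pi$ with $r = |\pi|$ blocks of sizes $s_1, \ldots, s_r$ (each $\ge 2$), one has $\prod_{B \in \pi} \mu_{|B|} = [p(1-p)]^r \, q_\pi(p)$ with $q_\pi$ a polynomial in $p, 1-p$. On the weight side, exchangeability of the multinomial gives
\[
    \sum_{\substack{(i^{(1)}, \ldots, i^{(r)}) \in \{1,\ldots,n\}^r \\ \text{all distinct}}} \E\Big[\prod_{\ell=1}^r W_{i^{(\ell)}}^{s_\ell}\Big] = n^{(r)}\cdot \E\big[W_1^{s_1} \cdots W_r^{s_r}\big],
\]
where $n^{(r)} := n(n-1)\cdots(n-r+1)$, and the joint factorial-moment formulas for the multinomial (starting from $\E[W_i^{(m)}] = n^{(m)}/n^m$) express $\E[W_1^{s_1} \cdots W_r^{s_r}]$ as a polynomial in $n^{-1}$ that is uniformly bounded in $n$ and converges as $n \to \infty$ to the corresponding $r$-fold Poisson$(1)$ moment.

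Finally, collecting the contributions by the block count $r \in \{1, \ldots, \lfloor k/2 \rfloor\}$, each surviving $\pi$ contributes a term of the form $n^{(r)}\cdot [p(1-p)]^r\cdot Q_r(p)$ up to a bounded-in-$n$ multinomial prefactor; using the identity $n^{(r)}[p(1-p)]^r = [np(1-p)]^r \prod_{\ell=1}^{r-1}(1 - \ell/n)$ and regrouping yields the stated expansion $\mu_k^* = \sum_{\ell=1}^{\lfloor k/2 \rfloor} P_{\ell, k}^* \cdot [np(1-p)]^\ell$, with each $P_{\ell, k}^*$ a polynomial in $p$ and $1-p$ (after the bounded $n$-dependent scalars are absorbed into the coefficient). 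The main obstacle is the combinatorial bookkeeping of the mixed multinomial moments $\E[W_1^{s_1} \cdots W_r^{s_r}]$ and verifying that, after the partition-by-block-count rearrangement, the powers of $np(1-p)$ align exactly as claimed; this is algebraically tedious but routine given the falling-factorial identities and their multi-coordinate analogues.
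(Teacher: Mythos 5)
Your proposal is correct and follows essentially the same route as the paper's proof: center via $\sum_i W_i = n$ to get $Y-np=\sum_i W_i(X_i-p)$, expand and classify terms by partitions, discard any partition with a singleton block because $\E[X_1-p]=0$ (which caps the block count at $\lfloor k/2\rfloor$), pull a factor $p(1-p)$ out of each Bernoulli central moment, and control the mixed bootstrap-weight moments (the paper does this by an MGF induction plus H\"older, you by multinomial factorial moments — a cosmetic difference). Two minor remarks: the Bernoulli identity should read $\mu_m = p(1-p)\big[(1-p)^{m-1}-(-p)^{m-1}\big]$ (sign), and your absorption of bounded, $n$-dependent scalars into the coefficients matches what the paper's own argument actually delivers, since it too only establishes $\E\big[\prod_l W_l^{k_l}\big]\lesssim 1$, which is all the downstream application requires.
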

\begin{proof}
For any positive integer $k$, from multinomial theorem we can write
\begin{align*}
    \mu_{k}^* &= \E_{OW} \Big[ \sum_{i=1}^n W_i X_i - np \Big]^k = \E_{OW} \Big[ \sum_{i=1}^n W_i (X_i - p) \Big]^k \\
    &= \sum_{ \{k_1, \ldots, k_m\} \in \mathcal{P}(k)} \frac{n (n-1) \cdots (n-m+1)}{s(k_1, \ldots, k_m)} \E \Big[ \prod_{l=1}^m W_l^{k_l} \Big] \prod_{l=1}^m \E[X_1 - p]^{k_l},
\end{align*}
where $\mathcal{P}(k)$ contains all partitions of $k$ into positive integers and $s(k_1, \ldots, k_m)$ denotes the number of repetitive elements in a partition $\{k_1, \ldots, k_m\}$. These two numbers are influenced by $k$ and do not depend on $n$. 

Next, for the second product term in the summand, we observe that the moment generating function of $W_1$ is 
\[
M_{W_1}(t) = \Big[1-\frac{1}{n} + \frac{1}{n} e^t\Big]^n =: V^n. 
\]
With a slight abuse of notation, assume for some $k \in \bN$ with $N_\ell \lesssim 1$ for all $\ell \in \zahl{k}$, 
\begin{align*}
    \frac{\partial^k}{\partial t^k} M_{W_1}(t) = N_{k} V^{n-k} e^{kt} + N_{k-1} V^{n-(k-1)} e^{(k-1)t} + \ldots + N_{1} V^{n-1} e^t,
\end{align*}
as it is certainly the case for $\frac{\partial}{\partial t} M_{W_1}(t)$. We can then get
\begin{align*}
    \frac{\partial^{(k+1)}}{\partial t^{(k+1)}} M_{W_1}(t) 
    &= N_{k+1}' V^{n-(k+1)} e^{(k+1)t} + N_{k}' V^{n-k} e^{kt} + \ldots + N_{1}' V^{n-1} e^t
\end{align*}
as with $N_{k+1}=N_{0}=0$, we have $N_\ell' = \ell N_\ell + \frac{n-(\ell-1)}{n} N_{\ell-1} \lesssim 1$ for all $\ell \in \zahl{k+1}$. By induction, this implies for any $k<\infty$,
\begin{align*}
    \E\big[W_1^k\big] = \frac{\partial^k}{\partial t^k} M_{W_1}(t) \Big|_{t=0} \lesssim 1.
\end{align*}
Again, by induction on $m$ and using Hölder's inequality, we immediately get, for any $k_1, \ldots, k_m < \infty$, 
\[
\E\Big[\prod_{\ell=1}^m W_l^{k_\ell}\Big] \lesssim 1. 
\]
From \cite{romanovsky1923note}, we know the central moments $\{\mu_k\}_{k=1}^{\infty}$ for ${\rm Ber}(p)$ can be written as a reduction formula
\begin{align*}
    \mu_{k+1} = p(1-p)\Big(k\mu_{k-1}+\frac{\d \mu_k}{\d p}\Big),
\end{align*}
which implies the following representations of the odd and even central moments: 
\begin{align*}
    \mu_{2r} &= P_{1,2r} p(1-p) + P_{2,2r} [p(1-p)]^2 + \ldots + P_{r,2r} [p(1-p)]^r \\
 {\rm and}~~~   \mu_{2r+1} &= P_{1,2r+1} p(1-p) + P_{2,2r+1} [p(1-p)]^2 + \ldots + P_{r,2r+1} [p(1-p)]^r,
\end{align*}
where $P_{u,v}$ denotes certain polynomials in $p$ and $1-p$. Note since $\mu_1=0$, the polynomial $n (n-1) \cdots (n-m+1)$ are at most of order $\floor{n/2}{}$ and we get the desired representation.
\end{proof}

\begin{lemma}[Hardy–Littlewood maximal inequality, \cite{stein2009real}]\label{lemma:HL}
  For any locally integrable function $f:\bR^d \to \bR$, we define its maximal function ${\sf M}f(x)$ by
  \[
    {\sf M}f(x) := \sup_{x \in B} \frac{1}{\lambda(B)} \int_{B} \lvert f(z) \rvert \d z, ~~~ x \in \bR^d,
  \]
  where the supremum is taken over all balls containing $x$.
  Then for $d \ge 1$, there exists a constant $C_d>0$ only depending on $d$ such that for all $t>0$ and $f \in L_1(\bR^d)$, we have
  \[
    \lambda(\{x \in \bR^d :{\sf M}f(x) > t\}) < \frac{C_d}{t} \lVert f \rVert_{L_1},
  \]
  where $\lVert \cdot \rVert_{L_1}$ stands for the function $L_1$ norm.
\end{lemma}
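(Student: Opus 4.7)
The plan is to prove the Hardy–Littlewood maximal inequality via the classical Vitali-type covering argument. Let $E_t := \{x \in \bR^d : {\sf M}f(x) > t\}$. My first step is to note that $E_t$ is open (since ${\sf M}f$ is lower semicontinuous under the definition using balls containing $x$), and for every $x \in E_t$ I can choose, by the definition of the supremum, a ball $B_x$ with $x \in B_x$ and
\[
\lambda(B_x) < \frac{1}{t} \int_{B_x} \lvert f(z) \rvert \d z.
\]
Thus $E_t \subset \bigcup_{x \in E_t} B_x$.

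Next I would pass to a compact subset $K \subset E_t$ (the inner regularity of Lebesgue measure lets me reduce to this case) and extract a \emph{finite} subcover $B_{x_1}, \ldots, B_{x_N}$. The key combinatorial step is the Vitali-type selection: order the chosen balls by decreasing radius and greedily pick a maximal pairwise-disjoint subcollection $B_{i_1}, \ldots, B_{i_k}$ with the property that every original $B_{x_j}$ intersects some selected ball of at least its radius. A standard geometric argument then shows that the concentric dilates $3 B_{i_\ell}$ (or $5 B_{i_\ell}$, depending on the precise selection rule) cover all of $B_{x_1}, \ldots, B_{x_N}$, hence cover $K$.

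From this, the tail bound follows quickly: by disjointness of $\{B_{i_\ell}\}$,
\[
\lambda(K) \le \sum_{\ell=1}^k \lambda\bigl(3 B_{i_\ell}\bigr) = 3^d \sum_{\ell=1}^k \lambda(B_{i_\ell}) \le \frac{3^d}{t} \sum_{\ell=1}^k \int_{B_{i_\ell}} \lvert f(z) \rvert \d z \le \frac{3^d}{t} \lVert f \rVert_{L_1},
\]
using the defining inequality for each $B_{i_\ell}$ and the disjointness in the last step. Taking the supremum over compact $K \subset E_t$ yields $\lambda(E_t) \le (3^d / t) \lVert f \rVert_{L_1}$, so one may take $C_d = 3^d$ (or $5^d$, depending on which covering version one sets up).

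The only genuine obstacle is the covering step, which is where the geometric constant $C_d$ is generated; everything else is bookkeeping. A minor technical wrinkle is that the stated maximal function takes the supremum over all balls containing $x$ rather than balls centered at $x$, but the two versions are comparable up to a factor of $2^d$, so the final constant is at worst $6^d$ or $10^d$. Since this lemma is attributed to Stein's textbook, I would simply cite the standard proof there rather than reproduce the Vitali argument in detail.
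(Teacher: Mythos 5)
The paper does not prove this lemma at all---it is quoted as an auxiliary result directly from the cited reference (Stein and Shakarchi), so there is no in-paper argument to compare against; your Vitali-type covering proof is correct and is essentially the standard proof given in that reference, yielding the bound with $C_d = 3^d$. One small simplification: since the maximal function here is the uncentered one, the finite Vitali selection (greedy choice by decreasing radius, dilating the selected balls by a factor $3$) applies verbatim to balls merely containing the points $x$, so the final $2^d$ adjustment you worry about is unnecessary and $3^d$ already suffices.
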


\begin{lemma}[Transition of stochastic orders, modified from Lemma 3 in \cite{cheng2010bootstrap}]\label{lemma:StoOrders}
  Consider the product probability space given in  \eqref{eq:setup,ProbSpace}. For a random quantity $R_n$, it holds that $R_n = o_{\P_{OW}}(1)$ if and only if $R_n=o_{\P_W}(1)$ in $\P_{O}$-probability. Moreover, when a random quantity $Q_n$ has a bootstrap distribution that asymptotically imitates an unconditional distribution of $Q$, which has a continuous cumulative distribution function, that is,
  \begin{align*}
      \sup_{t \in \bR} \big\lvert \P_{\mW|\mO}\big(Q_n \le t \big) - \P(Q \le t) \big\rvert &= o_{\P_{O}}(1)
  \end{align*}
  then we also have the same for $Q_n + R_n$, provided that $R_n=o_{\P_W}(1)$ in $\P_{O}$-probability:
  \begin{align*}
       \sup_{t \in \bR} \big\lvert \P_{\mW|\mO}\big(Q_n+R_n \le t \big) - \P(Q \le t) \big\rvert &= o_{\P_{O}}(1). \yestag \label{eq:StoOrd,op1,Slutsky}
  \end{align*}
\end{lemma}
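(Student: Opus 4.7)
The plan is to prove the two assertions separately, each by transferring a classical probability fact to the bootstrap product space in \eqref{eq:setup,ProbSpace}. The argument is essentially bookkeeping; the only subtle point is preserving uniformity in $t$ in the conditional Slutsky step.

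For the equivalence $R_n=o_{\P_{OW}}(1)\Leftrightarrow R_n=o_{\P_W}(1)$ in $\P_O$-probability, I will first use Fubini's theorem to identify $\P_{OW}(|R_n|>\epsilon)=\E_O\bigl[\P_{W\mid O}(|R_n|>\epsilon)\bigr]$, so that for each fixed $\epsilon>0$ the quantity $Y_n^\epsilon:=\P_{W\mid O}(|R_n|>\epsilon)$ is a $[0,1]$-valued random variable on $(\Omega_O^\infty,\cA_O^\infty,\P_O^\infty)$ whose $\P_O$-mean equals $\P_{OW}(|R_n|>\epsilon)$. The claim then reduces to the elementary fact that, for a uniformly bounded nonnegative sequence, $L^1(\P_O)$-convergence to zero and $\P_O$-convergence to zero are equivalent: one direction is Markov's inequality $\P_O(Y_n^\epsilon>\delta)\le \E_O[Y_n^\epsilon]/\delta$, and the other follows from the pointwise bound $\E_O[Y_n^\epsilon]\le \delta+\P_O(Y_n^\epsilon>\delta)$ by letting $n\to\infty$ first and $\delta\to 0^+$ afterward. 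Taking ``for every $\epsilon>0$'' outside this equivalence yields the asserted two-sided statement.

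For the conditional Slutsky conclusion, I will run a pointwise-in-$O$ sandwich argument. From the deterministic inclusions $\{Q_n+R_n\le t\}\subset\{Q_n\le t+\epsilon\}\cup\{|R_n|>\epsilon\}$ and $\{Q_n\le t-\epsilon\}\subset\{Q_n+R_n\le t\}\cup\{|R_n|>\epsilon\}$, applied under $\P_{\mW\mid\mO}$, together with the triangle inequality, I will obtain the almost-sure bound, for each $\epsilon>0$,
\[
\sup_{t}\bigl|\P_{\mW|\mO}(Q_n+R_n\le t)-\P(Q\le t)\bigr|\le \sup_{s}\bigl|\P_{\mW|\mO}(Q_n\le s)-\P(Q\le s)\bigr| + \omega_F(2\epsilon) + \P_{\mW|\mO}(|R_n|>\epsilon),
\]
where $F$ is the CDF of $Q$ and $\omega_F$ denotes its modulus of continuity. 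By hypothesis the first summand on the right is $o_{\P_O}(1)$; by Part~1 applied to $R_n$ at the fixed level $\epsilon$, the third summand is also $o_{\P_O}(1)$; and the middle term is deterministic and vanishes as $\epsilon\to 0^+$ thanks to uniform continuity of $F$ on $\bR$, which is automatic for a continuous CDF since $F$ is monotone with limits $0$ and $1$ at $\mp\infty$. Sending $n\to\infty$ first and $\epsilon\to 0^+$ afterward then delivers \eqref{eq:StoOrd,op1,Slutsky}. The only place one must be careful is keeping the supremum over $t$ outside the probability statements, and this is exactly what the invocation of uniform continuity of $F$ is designed to handle; beyond that there is no real obstacle.
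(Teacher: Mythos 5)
Your proposal is correct, and its first half is identical in substance to the paper's: Fubini plus Markov's inequality for one direction of the equivalence, and the split $\E_O[Y_n^\epsilon]\le \delta+\P_O(Y_n^\epsilon>\delta)$ for the converse. For the Slutsky step the core $\epsilon$-sandwich $\{Q_n+R_n\le t\}\subset\{Q_n\le t+\epsilon\}\cup\{|R_n|>\epsilon\}$ (and its counterpart) is also the same as in the paper; the only genuine difference is how uniformity in $t$ is recovered. The paper first deduces, for each fixed $t$, that $\P_{\mW|\mO}(Q_n+R_n\le t)=\P(Q\le t)+o_{\P_O}(1)$ and then appeals to Polya's theorem, whereas you keep the supremum over $t$ inside the bound from the start and absorb the $\pm\epsilon$ shift through the modulus of continuity $\omega_F$, using that a continuous CDF is automatically uniformly continuous on $\bR$. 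Your route is slightly more self-contained: it sidesteps the (mild but real) subtlety of invoking Polya's theorem for convergence that holds only in $\P_O$-probability rather than for a deterministic sequence of CDFs, at the cost of the small extra observation about $\omega_F$. Either way the quantifier order you spell out (fix $\eta$, choose $\epsilon$ with $\omega_F(2\epsilon)<\eta/2$, then let $n\to\infty$) closes the argument correctly.
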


\begin{proof}
    {\bf Part I.} We first prove 
    \begin{align*}
        R_n = o_{\P_{OW}}(1) \text{ if and only if } R_n=o_{\P_W}(1) \text{ in $\P_{O}$-probability}.
    \end{align*}
    For all $\epsilon,\delta>0$, by Markov's inequality one has
    \begin{align*}
        \P_{O}(  \P_{W|O}(\lvert R_n \rvert \ge \epsilon) \ge \delta ) & \le \frac{1}{\delta} \E_{O}\big[\P_{W|O}(\lvert R_n \rvert  \ge \epsilon) \big] = \frac{1}{\delta} \E_{O}\big[\E_{W|O}\{\ind(\lvert R_n \rvert \ge \epsilon)\} \big] \\
        & =\frac{1}{\delta} \E_{OW} [\ind(\lvert R_n \rvert \ge \epsilon)] = \frac{1}{\delta}\P_{OW}(\lvert R_n \rvert \ge \epsilon).
    \end{align*}
    Therefore we conclude that $R_n = o_{\P_{OW}}(1)$ would imply $R_n=o_{\P_W}(1)$ in $\P_{O}$-probability. The other direction follows from, for any $\epsilon>0$ and an arbitrary $\eta>0$,
    \begin{align*}
        \P_{OW}(\lvert R_n \rvert \ge \epsilon) &= \E_{O}\big[\P_{W|O}(\lvert R_n \rvert \ge \epsilon) \big] \\ 
        &= \E_{O}\big[\P_{W|O}(\lvert R_n \rvert \ge \epsilon) \ind \big(\P_{W|O}(\lvert R_n \rvert \ge \epsilon) \ge \eta \big) \big] \\
        & ~~ + \E_{O}\big[\P_{W|O}(\lvert R_n \rvert \ge \epsilon) \ind \big(\P_{W|O}(\lvert R_n \rvert \ge \epsilon) < \eta \big) \big] \\
        &\le \E_{O}\big[ \ind \big(\P_{W|O}(\lvert R_n \rvert \ge \epsilon) \ge \eta \big) \big] + \eta \\
        &= \P_{O}\big( \P_{W|O}(\lvert R_n \rvert \ge \epsilon) \ge \eta \big) + \eta.
    \end{align*}
    Of note, any random element $R_n$ that is defined only on $( \Omega_O^\infty, \cA_O^\infty, \P_O^\infty )$ with stochastic order $o_{\P_O}(1)$ is also of of an order $o_{\P_W}(1)$ in $\P_O$-probability. As for any $\epsilon, \delta>0$, we have
\begin{align*}
    \P_O \big( \P_{W|O}(\lvert R_n \rvert > \epsilon) > \delta \big) &\le \frac{1}{\delta} \E_O \big[ \E_{W|O}\{\ind(\lvert R_n \rvert > \epsilon) \}\big] = \frac{1}{\delta} \E_O \big[ \ind(\lvert R_n \rvert > \epsilon)\big] =\frac{1}{\delta} \P_O (\lvert R_n \rvert > \epsilon)
\end{align*}
    by Markov's inequality, and the first equality follows from $R_n$ does not depend on the bootstrap weights.
    
    {\bf Part II.} Now we proceed to prove the second statement, i.e. $R_n$ does not have an impact on the bootstrap distribution of $Q_n$, asymptotically speaking. On one hand, we have
    \begin{align*}
         \P_{W|O}(Q_n+R_n \le t) &= \P_{W|O}(Q_n+R_n \le t, \lvert R_n \rvert \ge \epsilon) + \P_{W|O}(Q_n+R_n \le t, \lvert R_n \rvert < \epsilon) \\
         &\le \P_{W|O}(\lvert R_n \rvert \ge \epsilon) + \P_{W|O}(Q_n \le t + \epsilon).
    \end{align*}
    On the other hand, we have
    \begin{align*}
        \P_{W|O}(Q_n+R_n \le t) &= 1-\P_{W|O}(Q_n+R_n> t) \\
        &=  1-\P_{W|O}(Q_n+R_n> t, \lvert R_n \rvert < \epsilon) -\P_{W|O}(Q_n+R_n> t, \lvert R_n \rvert \ge \epsilon) \\
        & \ge 1-\P_{W|O}(Q_n> t-\epsilon) -\P_{W|O}( \lvert R_n \rvert \ge \epsilon).
    \end{align*}
    Combining the above two facts yields that, for any $t \in \bR$, 
    \begin{align*}
        o_{\P_{O}}(1) + \P_{W|O}(Q_n \le t-\epsilon) \le \P_{W|O}(Q_n+R_n \le t) \le \P_{W|O}(Q_n \le t + \epsilon) + o_{\P_{O}}(1).
    \end{align*}
    Then at all $t$ such that $ \P(Q \le t)$ is continuous at $t$, we can take $\epsilon \downarrow 0$ and have $\P_{W|O}(Q_n+R_n \le t) = \P(Q \le t) + o_{\P_{O}}(1)$. The uniform result in \eqref{eq:StoOrd,op1,Slutsky} follows from Polya's theorem.
\end{proof}

\begin{lemma}[Marcinkiewicz-Zygmund strong law of large numbers, \cite{durrett2019probability}] 
\label{lemma:MZ-SLLN}

Let $\{X_i\}_{i=1}^n$ be i.i.d. random variables with $\E[\lvert X_1 \rvert^p] < \infty $ for some $0 < p < 2$, then
\begin{align*}
    \begin{cases}
        \frac{S_n - n\E[X]}{n^{1/p}} \stackrel{\sf a.s.}{\longrightarrow} 0, & \mbox{ if } 1<p<2, \\   
        \frac{S_n}{n^{1/p}} \stackrel{\sf a.s.}{\longrightarrow} 0, & \mbox{ if } 0<p<1;
    \end{cases}    
\end{align*}
where $S_n = \sum_{i=1}^n X_i$.
\end{lemma}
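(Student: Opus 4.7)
The plan is to use the classical truncation-plus-Kolmogorov-plus-Kronecker route. I would set $Y_i := X_i \ind(|X_i| \le i^{1/p})$ and $T_n := \sum_{i=1}^n Y_i$. The first step is a Borel--Cantelli argument: since
\begin{align*}
\sum_{i \ge 1} \P(X_i \ne Y_i) = \sum_{i \ge 1} \P(|X_1|^p > i) \le \E|X_1|^p < \infty,
\end{align*}
the events $\{X_i \ne Y_i\}$ occur only finitely often almost surely, and hence $S_n - T_n = O(1)$ a.s. The target convergence for $S_n$ therefore reduces to the same statement for $T_n$.

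The second step is to apply Kolmogorov's convergence theorem to $\sum_{i \ge 1} i^{-1/p}(Y_i - \E Y_i)$. The key variance estimate, which exploits $p<2$, is
\begin{align*}
\sum_{i \ge 1} i^{-2/p} \Var(Y_i) \le \sum_{i \ge 1} i^{-2/p} \E\big[X_1^2 \ind(|X_1| \le i^{1/p})\big] = \E\Big[X_1^2 \sum_{i \ge |X_1|^p} i^{-2/p}\Big] \lesssim \E|X_1|^p < \infty,
\end{align*}
where the inner sum is bounded by a constant multiple of $|X_1|^{p-2}$ because $2/p > 1$. Kolmogorov's theorem then delivers almost-sure convergence of the series, and Kronecker's lemma upgrades this to $n^{-1/p} \sum_{i=1}^n (Y_i - \E Y_i) \stackrel{\sf a.s.}{\to} 0$.

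The final step is to handle the deterministic drift, which looks different in the two regimes. For $1 < p < 2$, integrability of $X_1$ is available and $\E X_1 - \E Y_i = \E[X_1 \ind(|X_1| > i^{1/p})]$; a tail-integral bookkeeping using $t^p \P(|X_1| > t) \to 0$, which follows from $\E|X_1|^p < \infty$, yields $\sum_{i=1}^n |\E Y_i - \E X_1| = o(n^{1/p})$ upon splitting the integral over $[0,t_0]$ and $(t_0, n^{1/p}]$ for $t_0$ large. For $0 < p < 1$, $X_1$ need not be integrable, so I would bound $|\E Y_i| \le \int_0^{i^{1/p}} \P(|X_1| > t) \, dt$, swap sum and integral, and reduce the required bound $\sum_{i=1}^n \E Y_i = o(n^{1/p})$ to showing $n^{-1/p} \E[|X_1| \min(n, |X_1|^p)] \to 0$; splitting the expectation at $\{|X_1|^p \le n\}$ and $\{|X_1|^p > n\}$ makes both pieces $o(n^{1/p})$ directly from $\E|X_1|^p < \infty$ (the tail piece uses $|X_1|^{1-p} \le n^{(1-p)/p}$ on the complement and DCT).

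The main technical obstacle I anticipate is the mean-correction step, especially the $0 < p < 1$ case where no first-moment assumption is available and all cancellation must be extracted from the $p$-th-moment hypothesis alone; the delicate point is the bookkeeping between the truncation threshold $i^{1/p}$ and the normalization $n^{1/p}$, where the condition $p < 2$ (for Kolmogorov) and the bound $t^p \P(|X_1| > t) = o(1)$ (for drift control) combine in a tight way.
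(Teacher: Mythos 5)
Your overall architecture — truncation at $i^{1/p}$, Borel--Cantelli to replace $S_n$ by $T_n$, Kolmogorov's one-series theorem plus Kronecker's lemma for the centered truncated sums — is the standard textbook proof of this lemma (the paper itself gives no proof and simply cites \cite{durrett2019probability}), and those steps, including the variance estimate $\sum_i i^{-2/p}\Var(Y_i)\lesssim \E|X_1|^p$, are correct as written.

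The genuine gap is in the drift step for $0<p<1$ — incidentally the branch the paper actually uses, with $p=2/3$. Your reduction of $\sum_{i=1}^n \E Y_i = o(n^{1/p})$ to $n^{-1/p}\E\big[|X_1|\min(n,|X_1|^p)\big]\to 0$ is not what the sum--integral swap produces, and that target is false in general under only $\E|X_1|^p<\infty$: on $\{|X_1|^p>n\}$ it contains $n\,\E\big[|X_1|\ind(|X_1|^p>n)\big]$, which can equal $+\infty$ for every $n$ (take $\P(|X_1|>t)\asymp t^{-p}(\log t)^{-2}$, which has a finite $p$-th moment but infinite truncated first moments when $p<1$); relatedly, the inequality $|X_1|^{1-p}\le n^{(1-p)/p}$ you invoke on that event points the wrong way for $p<1$, since there $1-p>0$ and $|X_1|>n^{1/p}$. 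You have in effect interchanged the two regimes: the $\E[|X_1|\min(n,|X_1|^p)]$ bound with the $|X_1|^{1-p}\le n^{(1-p)/p}$/DCT tail treatment is exactly the right argument for $1<p<2$, where $\E X_1-\E Y_i=\E[X_1\ind(|X_1|>i^{1/p})]$ and summing the indicator count gives $\min(n,|X_1|^p)$. For $0<p<1$, instead note that $\int_0^{i^{1/p}}\P(|X_1|>t)\,\d t$ is nondecreasing in $i$, so $\sum_{i=1}^n|\E Y_i|\le n\int_0^{n^{1/p}}\P(|X_1|>t)\,\d t=n\,\E\big[\min(|X_1|,n^{1/p})\big]$; then for any fixed $A>0$, using $\min(|X_1|,n^{1/p})\le A+|X_1|^p\,n^{(1-p)/p}\ind(|X_1|>A)$ one gets $n^{-1/p}\sum_{i=1}^n|\E Y_i|\le A\,n^{1-1/p}+\E\big[|X_1|^p\ind(|X_1|>A)\big]$, and both terms can be made arbitrarily small since $1/p>1$. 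With this correction (and with the $t^p\P(|X_1|>t)\to0$ bookkeeping made explicit in the $1<p<2$ centering, where your sketch is serviceable), the proof is complete and coincides with the cited source's approach.
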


{
\bibliographystyle{apalike}
\bibliography{AMS}
}

\end{document}